\documentclass[reqno]{amsart}
\usepackage{hyperref}
\usepackage{amssymb}
\usepackage{mathrsfs}
\usepackage{esint}
\usepackage{graphicx}
\usepackage{wrapfig}
\usepackage{float}
\usepackage{bm}
\usepackage{inputenc}
\usepackage{amsmath}
\usepackage{mathtools}
\numberwithin{equation}{section}
\newtheorem{theorem}{Theorem}[section]
\newtheorem{lemma}[theorem]{Lemma}
\newtheorem{proposition}[theorem]{Proposition}
\newtheorem{remark}[theorem]{Remark}

\numberwithin{equation}{section}
\newtheorem{thm}{Theorem}[section]
\newtheorem{lema}[thm]{Lemma}

\title[]%\hfilneg Hydrodynamic limit of Boltzmann equation]
{Acoustic limit of Boltzmann equations for  gas mixture}

\author[
]{Gaofeng Wang,  Tianfang Wu  and Linjie Xiong}

\address{G.F. Wang. School of Mathematics and Statistics, Nantong University,
	Nantong 226019, P. R. China.}
\email{gfwang@ntu.edu.cn}

\address{T.F. Wu. (corresponding author). Wenzhou Business College, Wenzhou 325000, P. R. China.}
\email{20249237@wzbc.edu.cn}

\address{L.J.Xiong. School of Mathematical Sciences, Hunan University, Changsha, 410082, P.R.China.}
\email{xlj@hnu.edu.cn}
\thanks{}

\begin{document}	
	
	\maketitle
	\begin{abstract}
		In this paper, we study the hydrodynamic and acoustic limit from Boltzmann equations for two species gas mixture
		with potential $\gamma \in \left(-3, 1\right]$. % in the whole space $(x \in \mathbb{R}^3)$.
		Here the particle masses are different which derives to the loss of symmetry to the linearized
		collision operator. %This paper resolves it precisely  by using a framework based on vector-valued functions.
		We construct the hydrodynamic limit for two species based on the Hilbert expansion method when the Knudsen number is small. The key observation is the precise properties of the linearized collision operators, including the extra operators due to the different particle masses $(m^A \neq m^B)$.   In additional, the acoustic limit of the Boltzmann equations for gas mixtures is rigorously justified by assuming the strength of the initial data depends on the Knudsen number. %Existing literature on fluid dynamic limits for the Boltzmann equation predominantly concerns single-species gases. For systems involving multiple species, the particles are typically assumed to have equal masses.
		\\
		
		\noindent{\bf Keywords}. {Compressible Euler equations; Acoustic system; Boltzmann equations of gas mixture;
			Different particle masses.} \vspace{5pt}\\
		\noindent{\bf AMS subject classifications:} {35B38, 35J47}
	\end{abstract}
	
	\allowdisplaybreaks

	\section{Introduction and main result}
	
	\subsection{Introduction}
	We consider the Boltzmann equations for two species particles $A, B$ as follows
	\begin{equation}\label{MAMAMAMAzhuyaotuimox}
		\left\{
		\begin{gathered}
			\partial_tF^A+v\cdot \nabla_xF^A=\frac{1}{\varepsilon}\Big[Q^{AA}(F^A,F^A)+Q^{AB}(F^A,F^B)\Big],
			\\
			\partial_tF^B+v\cdot \nabla_xF^B=\frac{1}{\varepsilon}\Big[Q^{BA}(F^B,F^A)+Q^{BB}(F^B,F^B)\Big].
		\end{gathered}
		\right.
	\end{equation}
	In the following, the particle species is denoted by the Greek letters $\alpha$ and $\beta$ ($\alpha,\beta\in\{A,B\}$). The density distribution function for species $\alpha$ is given by $F^\alpha(t,x,v)$, which is defined at time $t>0$, position $x=(x_1,x_2,x_3)$ with the velocity $v=(v_1,v_2,v_3)$. Particle mass of species $\alpha$ is denoted by $m^\alpha$.  The parameter $ \varepsilon$ is called Knudsen number, which is proportional to the mean free path.
	The collision operator  for $\alpha$-$\beta$ particle pairs takes the form
	$$ Q^{\alpha \beta}(F^{\alpha},F^{\beta})=\int_{\mathbb{R}^3}\int_{\mathbb{S}^2}B^{\alpha \beta}(|v-v_*|,\theta) \Big[F^{\alpha}(v') F^{\beta}(v'_*)-F^{\alpha}(v)F^{\beta}(v_*) \Big] d\omega dv_*. $$
	Here $v, v_*$
	and $v', v'_*$  are the velocities before and after the collision corresponding to the species $\alpha$ and $\beta$, which obey the laws of momentum conservation and energy conservation:
	\begin{equation*}
		m^{\alpha} v' +m^{\beta}v_*' =m^{\alpha} v +m^{\beta} v_*, \quad
		m^{\alpha}|v'|^{2}+m^{\beta}|v_*'|^2 =m^{\alpha}|v|^2+m^{\beta}|v_*|^2.
	\end{equation*}
	Let $\omega\in\mathbb{S}^2$ and $\cos\theta=\frac{\omega\cdot(v-v_*)}{|v-v_*|}$. The velocity $ v', v'_* $ after the collision are expressed as	
	\begin{equation*}
		v'=v-\frac{2m^{\beta}}{m^\alpha+m^\beta}[(v-v_*)\cdot \omega]\omega,
		\quad
		v'_{*}=v_{*}+\frac{2m^{\alpha}}{m^\alpha+m^\beta}[(v-v_*)\cdot \omega]\omega.
	\end{equation*}
	
	The collision kernels $ B^{\alpha \beta}(|v-v_*|,\theta)$ is assumed to satisfy the following assumptions:
	
	\noindent (1) The symmetry of collision kernels
	\begin{equation*}
		B^{\alpha \beta}(|v-v_*|,\theta) =B^{\beta \alpha}(|v_*-v|,\theta) \,\quad  \alpha,  \beta\in \{A,B\}.
	\end{equation*}
	
	\noindent (2) The collision kernels are decomposed into
	\begin{equation*}
		B^{\alpha \beta}(|v-v_*|,\theta) =\Phi^{\alpha \beta} (|v-v_*|) b^{\alpha \beta}(\cos \theta)\,\quad  \alpha,  \beta\in \{A,B\}.
	\end{equation*}
	Here the kinetic part takes
	\begin{equation*}
		\Phi^{\alpha \beta}(|v-v_*|) =C_{\alpha \beta}^{\Phi} |v-v_*|^{\gamma} , \,  C_{\alpha \beta}^{\Phi} >0 , \, \gamma \in
		( -3,1 ],
	\end{equation*}
	where $\gamma=$
	and there exist a constant $C_{b}  >0 $, for $\forall \alpha,  \beta\in \{A,B\}$ and $\theta \in [0,\pi]$, such that
	\begin{equation*}
		0\leq b^{\alpha \beta}(\cos \theta)\leq C_{b}  |\cos \theta |.
	\end{equation*}
	It means that the angular part satisfies Grad’s angular cutoff assumption \cite{[61]Grad1958TG}.
	
	The derivation of the fluid dynamical equations from kinetic theory can be traced back to the seminal contributions of Maxwell \cite{[37]Maxwell1867JCM} and Boltzmann \cite{[9]Boltzmann1872AWW}. Although the formal derivations \cite{[4]BGLJSP 1991} have been established at various conceptual levels, a comprehensive mathematical justification remains challengeable which aligns with the objective of Hilbert's sixth problem \cite{[23]Hilbert}: to construct a unified mathematical framework for gas dynamics across different descriptive scales.
	
	Over the past four decades, substantial research has addressed the well-posedness and fluid limits of the Boltzmann equation. There exist two main methodologies in the classical solutions framework: one involves the spectral analysis of the semigroup generated by the linearized Boltzmann
	equation, refer to \cite{[6]BSU1991}, \cite{[PG]Liu2004}, \cite{[PG]Liu2006}, \cite{[YU2010]}, while the other lies in developing a framework for $L^2$ energy estimates, see \cite{[DYZHD]VPB}, \cite{[ivR]Guo2002}, \cite{[iJMP]Guo}, \cite{[19]Guo2010ARMA}. %While semigroup methods struggle with complex Boltzmann equations, energy-based approaches often require small initial data constraints. 	
	Guo developed the $L^2-L^{\infty}$ method \cite{[21]Guo2009KRM}, \cite{[22]Guo2010CPAM}, which employs Hilbert expansion and reduce to the system of remainder terms. It is sufficient to establish the uniform of the remainders via $L^2$ energy and $L^{\infty}$ characteristic line procedures with weighting techniques. Within this framework, Guo, Huang, and Wang \cite{[20]Guo2021ARMA} studied the fluid limit of the single Boltzmann equation for the hard-sphere model in a half-space with specular reflection. Later, the hydrodynamic limit of the same equation with the Maxwell or complete diffuse reflection were examined in \cite{[29]Jiang2021}, \cite{[30]Jiang2021}. Both the kinetic boundary layer and the fluid boundary layer appeared to match the boundary conditions and the compressible Euler equation in the interior.
	
	Compared with the extensive study of single-gas Boltzmann equations, the study on the Boltzmann equation of gas mixtures has also attracted attention. These studies focus on the case of the same particle mass, that is $m^A=m^B$.  Wang \cite{YJWangSIMA} investigated diffusive limits in VPB systems with periodic boundaries, later quantifying decay rates in such systems \cite{[49]WangJDE2013}.  Aoki, Bardos, and Takata \cite{[1]Aoki2003JSP} studied Knudsen layer existence for Boltzmann equations with zero macroscopic flow, later generalized by Bardos and Yang \cite{[7]Bardos2012CPDE} for arbitrary flow velocities.
	Guo's foundational work \cite{[17]Guo2003Invention} established the Euler-Maxwell limit for hard-sphere VMB systems. Duan and Liu \cite{[DL]VPB} examined one-dimensional Euler-Poisson limits from Boltzmann system of mixtures, while Jiang, Lei, and Zhao \cite{[2025X]Jiang} explored Euler-Maxwell limits for $-3<\gamma\leq1$, $(m_{\pm}=e_{\pm}=1)$. It is worthwhile to mention Fang and Qi's recent study \cite{[FQ]ARC} on the hydrodynamic limit from Boltzmann equations for gas mixtures to a two-fluid macroscopic system. % According to the existing literature,
	There are relatively few results on the fluid limits of the Boltzmann equation system for gas mixtures of particles with unequal masses.
	
	\begin{figure}{}
\centering
	\includegraphics[width=11.5cm,height=6.5cm]{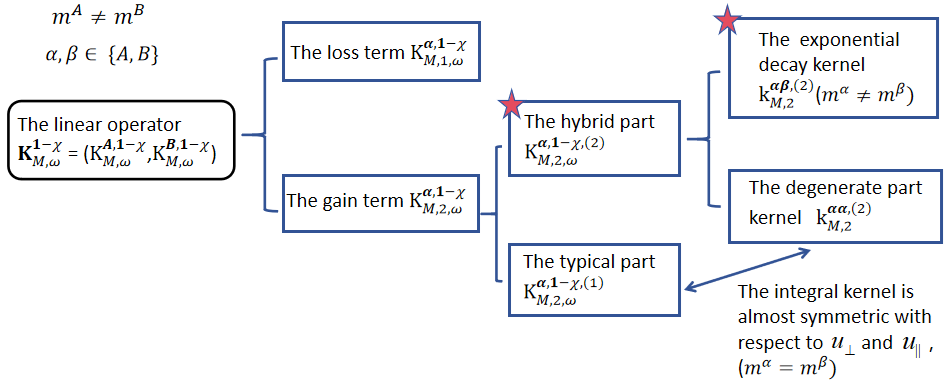}
	\caption{The collision between nitrogen and oxygen molecules }
	\end{figure}
	
	%Earth's atmosphere is composed of 78\% nitrogen $(N_2)$ and 21\% oxygen $(O_2)$, with a molecular mass ratio of approximately 7:8. For example, when investigating the greenhouse effect, the analysis of carbon dioxide $(CO_2)$ distribution and concentration variations requires quantifying its mass ratio relative to nitrogen—yielding a value of 7:11.
	
	However, it is nature to study the combined system with different mass particles $m^A \neq m^B$. In \cite{[60]Briant2016ARM}, Briant and Daus studied the stability and the time decay rate of the solution around the equilibrium of the multi-species Boltzmann system. Although the compressible Euler system for gas mixture has been established in reference \cite{[50]Wu2023JDE}, %
	the estimates of the linear operators were brought from those of single-species Boltzmann equation without proof. To study the hydrodynamic limit of the system \ref{MAMAMAMAzhuyaotuimox}, one should give some more precise analysis of the linear operators. In particular, the main observation here is the estimation of the operator $K_{M,2,w}^{\alpha,1-\chi}$ for the Boltzmann system of unequal-mass mixtures, (see Lemma \ref{LeMK2ker4444}), which reveals that it can be divided into two parts
	\begin{equation}
		K_{M,2,w}^{\alpha,1-\chi} = \underbrace{K_{M,2,w}^{\alpha,1-\chi,(1)}}_{\text{Typical part}} + \underbrace{K_{M,2,w}^{\alpha,1-\chi,(2)}}_{\text{Hybrid part}},
	\end{equation}
	where ``Typical part" shares the properties analogous to that of the Boltzmann equations in the case of identical gas molecules $(m_A=m_B)$.  ``Hybrid part" consists of a degenerate part $(m^{\alpha}=m^{\beta})$ and a part that exhibit exponential decay $(m^{\alpha} \neq m^{\beta})$. The exponential decay part is the main difference arising from the different mass particles $m^A \neq m^B$.  %Consequently,  \cite{[50]Wu2023JDE}'s approach of substituting $K_{M,2,w}^{\alpha,c}$ with $K_{M,2,w}^{\alpha,c,(1)}$ remains valid.
	The estimate of $K_{M,2,w}^{\alpha,1-\chi}$ provides critical theoretical results for spectral analysis of the linearized Boltzmann operator in multi-component gas systems with mass disparity. %which enriches the theoretical foundation for fluid limits of unequal-mass gas mixtures.
	This precise structural analysis offers the fundamental estimates of the linear operator in the unequal mass case, we believed that these estimates are helpful to get the pointwise estimates of classical solutions to the Boltzmann equation for the mixed gases. This will be studied in the forthcoming paper.
%	\begin{figure}
	%	\centering
	%	\includegraphics[width=15cm,height=6cm]{TPHB}
	%	\vspace{-0.5cm}
	%	\caption{The structural of operator $K_{M,2,w}^{\alpha,1-\chi}$.}
%	\end{figure}
	%has the following significant implications:
	%	\begin{enumerate}
		%		\item It provides technical support for pointwise estimates of classical solutions to the Boltzmann equation for mixed gases with unequal masses and their associated coupled models.
		
		%		\item It offers fundamental estimates for the linear collision operator of unequal-mass mixed gases within the framework of the $L^2$ energy method.
		
		%\item This work establishes a foundational lemma (Lemma \ref{LeMK2ker4444}), which
		%	\end{enumerate}

	\subsection{The properties of the collision operator.}
	
	For $\mathbf{F} = (F^A, F^B)^\top$,
	we introduce the bilinear Boltzmann collision operator $\mathscr{C}$ in vector form as follows:
	\begin{equation}\label{equation1.4}
		\mathscr{C} \mathbf{F} =
		\left(
		\begin{array}{cccc}
			Q^{AA}(F^A,F^A)+Q^{AB}(F^A,F^B)\vspace{3pt}\\
			Q^{BA}(F^B,F^A)+Q^{BB}(F^B,F^B)
		\end{array}
		\right).
	\end{equation}
	The operator $\mathscr{C}$ has the following properties \cite{[60]Briant2016ARM}:\\
	{\bf Collision invariant} A function $\mathbf{\Psi} = (\Psi^{A}, \Psi^{B})^\top$
	is called a collision invariant of the operator $\mathscr{C}$ with respect to the inner product in $(L^2_v(\mathbb{R}^3))^2$
	if it satisfies
	\begin{equation}\label{Collision invariants} \langle\mathscr{C}\mathbf{F}, \mathbf{\Psi} \rangle_{(L_v^2(\mathbb{R}^3))^2} = \sum_{\alpha,\beta \in {A,B}} \left\langle Q^{\alpha \beta}(F^{\alpha}, F^{\beta}), \Psi^{\alpha} \right\rangle_{L_v^2} = 0 \quad \text{for any vector } \mathbf{F}.
	\end{equation}
	It derives to
	\begin{equation}\label{mabasis}
		\mathbf{\Psi} \in \text{Span} \{\mathbf{e}_1, \mathbf{e}_2, v_1 \mathbf{m}, v_2 \mathbf{m}, v_3 \mathbf{m}, |v|^2 \mathbf{m}\},
	\end{equation}
	where $\mathbf{m} = (m^A, m^B)^{T}$
	and $\mathbf{e}_j$
	is the $j^{\text{th}}$
	unit vector in $\mathbb{R}^2$.\\
	{\bf H-theorem} The entropy function of the two-species Boltzmann equations  satisfies
	\begin{equation}\label{equation1.5}
		\left \langle
		\mathscr{C}\mathbf{F},  \log \mathbf{F}
		\right \rangle_{(L_v^2(\mathbb{R}^3))^2}
		= \sum_{\alpha,\beta\in \{A,B\}}\left\langle Q^{\alpha  \beta}(F^{\alpha},F^{\beta}) , ~~\log F^{\alpha} \right\rangle_{L_v^2(\mathbb{R}^3)}   \leq 0,
	\end{equation}
	and this equality holds if and only if
	\begin{equation}\label{dyjhlibsxF}
		\mathbf{F} 	=
		\left(
		\begin{array}{cccc}
			\frac{n^A(m^A)^{3/2} }{(2\pi \theta)^{3/2}}e^{-\frac{m^A |v-\mathbf{u}|^2}{2\theta}} \vspace{3pt}\\
			\frac{n^B(m^B)^{3/2} }{(2\pi \theta)^{3/2}}e^{-\frac{m^B |v-\mathbf{u}|^2}{2\theta}}
		\end{array}
		\right)=: \left(
		\begin{array}{cccc}  \mu^A \\ \mu^B\end{array}
		\right),  		
	\end{equation}
	where $n^\alpha (\alpha=A,B)$ are the number density of particle $\alpha$, $u,\theta$ are the bulk velocity and temperature of the system.   	
	
	For later use, we also denote that $F =(F^A,F^B)^\top $ with $F^{\alpha}= \mu^{\alpha}+ \sqrt{\mu^{\alpha}}f^\alpha$. Thus,  for $\mathbf{f}=(f^A,f^B)^\top$, we yield the following linear operator
	\begin{eqnarray}\label{1DefinitionL}
		\mathbf{L} \mathbf{f}=\left(\begin{array}{lll} \mathbf{L}^{A}\mathbf{f}\\ \mathbf{L}^{B}\mathbf{f}\end{array}\right)=: -\left(\begin{array}{lll}
			\frac{1}{\sqrt{\mu^{A}}} \displaystyle{\sum_{\beta=A,B}} \left[Q^{ A \beta}(\mu^A,\sqrt{\mu^{\beta}} f^{\beta})+Q^{A \beta}(\sqrt{\mu^A}f^{A},\mu^\beta)\right]\\
			\frac{1}{\sqrt{\mu^{B}}} \displaystyle{\sum_{\beta=A,B}} \left[Q^{B \beta}(\mu^B,\sqrt{\mu^{\beta}} f^{\beta})+Q^{B \beta}(\sqrt{\mu^B}f^{B},\mu^\beta)\right]  \end{array}\right).
	\end{eqnarray}

	From the H-theorem, it is well known that $\mathbf{L} $ is a self-adjoint operator in $(L^2_v(\mathbb{R})^3)^2$,  and the kernel of $ \mathbf{L}$ is spanned by the following six dimensional vector functions (see \cite{[60]Briant2016ARM})
	\begin{equation}
		{\rm Ker}\mathbf{L}={\rm span}\{ \mathbf{X}_0,\mathbf{X}_1,\cdots, \mathbf{X}_5\},
	\end{equation}
	with
	\begin{equation}\label{abasisMACRO}
		\begin{split}
			&\mathbf{X}_0= \sqrt{\frac{\mu^{A}}{n^A }}  \mathbf{e}_1, \,\,\, \mathbf{X}_1= \sqrt{\frac{\mu^{B}}{n^B }} \mathbf{e}_2,\,\,\, \mathbf{X}_j=\frac{v_{j-1}-u^{j-1}}{\sqrt{\theta n}}
			\left(
			\begin{array}{ccc}
				m^A\sqrt{\mu^{A}}\\[2mm]
				m^B \sqrt{\mu^{B}}
			\end{array}
			\right) ~(j=2,3,4),\\
			& \mathbf{X}_5=\frac{1}{\sqrt{6n}}
			\left(
			\begin{array}{ccc}
				(\frac{m^A |v-\mathbf{u}|^2}{\theta}-3)\sqrt{\mu^{A}}\\[2mm]
				(\frac{m^B |v-\mathbf{u}|^2}{\theta}-3)\sqrt{\mu^{B}}
			\end{array}
			\right)~~~~{\rm where}~ n=n^A+n^B.
		\end{split} 		
	\end{equation} 	
	One can check that
	$
	\langle \mathbf{X}_i, \mathbf{X}_j\rangle
	=\delta_{ij},\, 0\leq i,j \leq 5$, where $\delta_{ij}$ is the Kronecker symbol.
	%\item The decomposition of $(L^2_v(\mathbb{R}^3))^2 $. According to the operator $\mathbf{L}$,
	The space $(L^2_v(\mathbb{R}^3))^2$ can be decomposed as ${\rm Ker}\mathbf{L} \oplus \mathcal{N}(\mathbf{L})^{\bot}  $, where $ \mathcal{N}(\mathbf{L})^{\bot}$ is the orthogonal set of ${\rm Ker}\mathbf{L}$.
	Then,  for any $ \mathbf{f}=(f^A,f^B)^\top \in (L^2_v(\mathbb{R}^3))^2 $, it can be decomposed  as
	\begin{gather}\label{MCMADCP}
		\mathbf{f} = \mathcal{P} \mathbf{f} + (\mathbf{I}- \mathcal{P}) \mathbf{f}, \quad {\rm with}\quad \mathcal{P} \mathbf{f}= \sum_{i=0}^5 \langle \mathbf{f}, \mathbf{X}_i\rangle \mathbf{X}_i,
	\end{gather}
	where $\mathcal{P} \mathbf{f}$ and $(\mathbf{I}-\mathcal{P})\mathbf{f} = \mathbf{f}-\mathcal{P}\mathbf{f} $ are the macro-micro parts of $\mathbf{f}$ respectively. The main properties of the operator $\mathbf{L}$ is list in the following.
	
	\noindent (The solvability of $\mathbf{L}$). For any $\mathbf{R} =(R^A,R^B)^\top \in (L^2_v(\mathbb{R}^3)^2)^2 $, there exists  a unique solution for
	$\mathbf{L}\mathbf{f}=\mathbf{R}$,
	if and only if $\mathbf{R} \in \mathcal{N}(\mathbf{L})^{\bot}$, that is,
	\begin{gather}\label{1.22}
		\langle\mathbf{R}, \mathbf{X}_i\rangle_{(L^2_v(\mathbb{R})^3)^2} =0, \quad {\rm for~ all}\quad i=0,\cdots,5.
	\end{gather}
	Moreover, the solution $\mathbf{f} $ could be formulated as
	\begin{eqnarray}\label{1.23}
		\mathbf{f}= \mathbf{L}^{-1} \mathbf{R}\in \mathcal{N}(\mathbf{L})^{\bot}.
	\end{eqnarray}
	
	\noindent(The coercivity of $\mathbf{L}$ ). There exists a positive number $c_0 > 0$, for any $\mathbf{f}=(f^A,f^B)^\top \in (L^2_v(\mathbf{R}^3))^2 $,
	it holds
	\begin{equation}\label{Coercivity1.22}
		\left \langle \mathbf{L}\mathbf{f}, \mathbf{f}	\right \rangle_v \geq c_0 |
		(\mathbf{I}-\mathcal{P})\mathbf{f} |_{\nu}^2.
	\end{equation}
	The proof of \eqref{Coercivity1.22} for hard potential case is in \cite{[60]Briant2016ARM}, and \cite{[52]WuPERP}, and for soft potential case one can refer \cite{[51]WuPERP}.
	\subsection{The main results} Following the ideas in \cite{[ininp]Guo2010CMP}, the compressible Euler system for mixtures is derived by the Hilbert expansion of the Boltzmann equation for gas mixture. The life-span of the fluid limit system together with the truncation of the Hilbert expansion is obtained when the strength of the perturbation of the initial data is sufficiently small but independent of the Knudsen number. The hydrodynamic limit of the Boltzmann equation of gas mixture is established by study the uniform estimates of the remainder term. Later, the acoustic limit is yielded when the strength of the perturbation of the initial data is dependent on the Kundsen number. It is proved that the rigorous justification of the acoustic limit of \ref{MAMAMAMAzhuyaotuimox} is valid for any time interval when the Knudsen number is small enough. More precisely, we firstly assume the solutions of \eqref{MAMAMAMAzhuyaotuimox} have the following Hilbert expansion form
	\begin{equation}\label{MAINEXPFPHIKKK}
		F_{\varepsilon}^\alpha= F^\alpha_0 + \sum_{k=1}^{5}\varepsilon^k F^\alpha_k + \varepsilon^3F_R^{\alpha}, \quad {\rm for}\quad \alpha \in\{A,B\}.
	\end{equation}
	Substituting it into \eqref{MAMAMAMAzhuyaotuimox}, and compare the coefficients about order of $\varepsilon$, we can obtain that
	\begin{equation}\label{ORDERZONG}
		\begin{aligned}
			& \text{Order ~$O(\varepsilon^{-1})$:} \qquad\quad ~~ \qquad\qquad ~\,~~\, 0= \displaystyle{\sum_{\beta=A,B}}Q^{ \alpha \beta}(F_0^{\alpha}, F_0^{\beta}), \\
			& \text{Order ~$O(1)$:} \qquad\quad ~~~~( \partial_t +v \cdot \nabla_x)F_0^\alpha =\displaystyle{\sum_{i'+i''=1}\sum_{\beta=A,B}}Q^{ \alpha \beta}(F_{i'}^{\alpha},F_{i''}^{\beta}),\\
			& \text{Order ~$O(\varepsilon)$:} \quad \quad\qquad ~~~~	(\partial_t +v \cdot \nabla_x)F_1^\alpha =\displaystyle{\sum_{i'+i''=2} \sum_{\beta=A,B}} Q^{\alpha \beta}(F_{i'}^{\alpha},F_{i''}^{\beta}),\\
			&.\,.\,.\\
			& \text{Order ~$O(\varepsilon^{i})$:} \,\quad\quad ~~~~~~
			(\partial_t +v \cdot \nabla_{x})F_{i}^{\alpha}=\displaystyle{\mathop{\sum}_{i'+i''=i+1\atop i',i''\geq0} \sum_{\beta=A,B}} Q^{ \alpha\beta}(F_{i'}^{\alpha},F_{i''}^{\beta}),   \quad{i}\in \mathbf{Z}^+,\quad {i}\geq2.
		\end{aligned}
	\end{equation}
	The remainder term satisfies
	\begin{equation}\label{reeqmain}
		\begin{gathered}
			(\partial_t +v \cdot \nabla_{x})F_R^{\alpha}-\frac{1}{\varepsilon}\displaystyle{\sum_{\beta=A,B}}(Q^{ \alpha \beta}(F_0^{\alpha}, F_R^{\beta})+Q^{ \alpha \beta}(F_R^{\alpha}, F_0^{\beta}))
			=\varepsilon^{2}\displaystyle{\sum_{\beta=A,B}}Q^{ \alpha \beta}(F_R^{\alpha}, F_R^{\beta})\\
			\hspace{4cm}+\displaystyle{\sum^{5}_{i=1}}\displaystyle{\sum_{\beta=A,B}}\varepsilon^{i-1}(Q^{ \alpha \beta}(F_i^{\alpha}, F_R^{\beta})+Q^{ \alpha \beta}(F_R^{\alpha}, F_i^{\beta}))+\varepsilon^{2}E^{\alpha},
		\end{gathered}
	\end{equation}
	where
	\begin{equation}\label{dyjjieshuzkep}
		\begin{aligned}
			E^{\alpha}:=&-(\partial_t +v \cdot \nabla_{x})F_{5}^{\alpha}+\mathop{\sum}_{i'+i'' \geq 6\atop  i',i'' \geq1}\displaystyle{\sum_{\beta=A,B}} \varepsilon^{i'+i''-6} Q^{ \alpha \beta}(F_{i'}^{\alpha},F_{i''}^{\beta}).
		\end{aligned}
	\end{equation}
	
	\noindent{\it Step 1}. From $\eqref{ORDERZONG}_1$ and H-theorem, we know that $\mathbf{F}_0=(F_0^A,F_0^B)^\top$ is a local  bi-Maxwellian as
	\begin{equation}\label{1dyjhlibsxF0}
		\mathbf{F}_0 =	
		\left(
		\begin{array}{cccc}
			\frac{n^A_{\delta}(m^A)^{3/2} }{(2\pi \theta_{\delta})^{3/2}}e^{-\frac{m^A |v-\mathbf{u}_{\delta}|^2}{2\theta_{\delta}}} \vspace{3pt}\\
			\frac{n^B_{\delta}(m^B)^{3/2} }{(2\pi \theta_{\delta})^{3/2}}e^{-\frac{m^B |v-\mathbf{u}_{\delta}|^2}{2\theta_{\delta}}}
		\end{array}
		\right)=: \left(
		\begin{array}{cccc}  \mu^A_{\delta} \\ \mu^B_{\delta}\end{array}
		\right).  		
	\end{equation}
	By imposing the initial data \eqref{nonlindt}, one has $ \mathbf{F}_0(0)= ( \mu^{A,\rm in}_{\delta},\mu^{B,\rm in}_{\delta})^\top $.
	The densities $n^A_{\delta}, n^B_{\delta}$, the bulk velocity $\mathbf{u}_{\delta}$ and the bulk temperature $\theta_{\delta}$ will be determined later with this initial data.
	
	%We denote them as $n^A_{\delta}, n^B_{\delta},\mathbf{u}_{\delta}, \theta_\delta$ since they depend on the strength of the initial perturbation.
	
	\noindent{\it Step 2}. From $\eqref{ORDERZONG}_2$, one could
	derives that
	\begin{eqnarray}\label{1operator1}
		\mathbf{L}_{\delta}\mathbf{f}_1 = \mathbf{R}_0,
	\end{eqnarray}
	where the operator $\mathbf{L}_{\delta}$ is defined in \eqref{1DefinitionL} by replacing $\mu^{\alpha}$ by $\mu^{\alpha}_{\delta}$ and  $\mathbf{R}_0= (-\frac{1}{\sqrt{\mu^A_{\delta}}} ( \partial_t +v \cdot \nabla_x)\mu^A_{\delta}, -\frac{1}{\sqrt{\mu^B_{\delta}}} ( \partial_t +v \cdot \nabla_x)\mu^B_{\delta})^\top$.
	%We solve the $F_k (k=0,\cdots,5)$ as follows.
	From the solvability of $\mathbf{L}_{\delta}$, one could derive the compressible Euler equations \eqref{EQF0EPSION} and
	get $n^A_{\delta},n^B_{\delta}, \mathbf{u}_{\delta},\theta_{\delta}$ in the time interval $[0,\tau^{\delta}]$ for $\tau^{\delta}=O(\frac{1}{\delta})$ with the initial data \eqref{nonlindt}.
	
	\noindent{\it Step 2}. To solve $ \eqref{dyjhlibsxF}_k$, we set $F_{k}^{\alpha}=f_{k}^{\alpha}\sqrt{\mu^{\alpha}_{\delta}}$ and $\mathbf{f}_{k}=(f_{k}^{A},f_{k}^{B})^\top $ for $k=1,2,\cdots, 5$. Here $\mathbf{f}_{k}$ will be decomposed to the macroscopic and microscopic parts as
	$$ \mathbf{f}_{k}= \mathcal{P} \mathbf{f}_{k}+(\mathbf{I}-\mathcal{P}) \mathbf{f}_{k}, $$
	with
	\begin{eqnarray}\label{macroscopicpartk}\mathcal{P}\mathbf{f}_k=\frac{n^{A}_{k}}{\sqrt{n^{A}_{\delta}}} \mathbf{X}_0^\delta +\frac{n^{B}_{k}}{\sqrt{n^{B}_{\delta}}}\mathbf{X}_1^\delta
		+\sum_{j=2,3,4}\sqrt{\frac{n_{\delta}}{\theta_{\delta}}} u_k^{j-1} \mathbf{X}_j^\delta
		+\sqrt{\frac{n_{\delta}}{6}}\frac{\theta_k}{\theta_{\delta}}\mathbf{X}_5^\delta. \end{eqnarray}
	Here $ \mathbf{X}_k^\delta (k=0,\cdots,5)$ comes from \eqref{abasisMACRO} by replacing $\mu^{\alpha}$ with $\mu^{\alpha}_\delta$.
	
	On the one hand, from the solvability of $\mathbf{L}_{\delta}$, \eqref{1.23} and \eqref{1operator1} give
	\begin{eqnarray}
		(\mathbf{I}- \mathcal{P}){f}_1 = \mathbf{L}_{\delta}^{-1} \mathbf{R}_0  \in \mathcal{N}(\mathbf{L}_{\delta})^{\bot}.
	\end{eqnarray}
	%	which is the microscopic part $(\mathbf{I}- \mathcal{P}){f}_1$.
	On the other hand, the equation of the macroscopic part $(n^A_1, n^B_1, \mathbf{u}_1, \theta_1)$ in \eqref{macroscopicpartk} could be derived from the solvability of the next order $\eqref{ORDERZONG}_3$, which is the linear compressible Euler equation \eqref{linarFLsys}. The existence and the uniform estimates of this part is established when the initial data $(n^{A,\rm in}_1, n^{B,\rm in}_1, \mathbf{u}^{\rm in}_1, \theta^{\rm in}_1)$ is specified. Thus, one could get the vector $\mathbf{f}_1$ as well as $\mathbf{F}_1$.
	
	Following the same process, we can get the terms $\mathbf{F}_k (k=1,\cdots,5)$ together with the uniform estimates in the interval $[0,\tau^{\delta}]$  when the initial data $(n^{A,\rm in}_k, n^{B,\rm in}_k, \mathbf{u}^{\rm in}_k, \theta^{\rm in}_k)$ are given.   Thus, one could get the vector $\mathbf{f}_k$ and $\mathbf{F}_k$ for $k=2,\cdots,5$.  We omit the details here.
	
	With these in hand, we derive the uniform estimates of the remainder $\mathbf{F}_R$ from the equation \eqref{reeqmain}. The hydrodynamic limit of the \eqref{MAMAMAMAzhuyaotuimox} could be established as follows.
	
	\begin{thm}\label{mainthemCPE}
		Let $(n_{\delta}^{A, \rm in}, n_{\delta}^{B, \rm in}, \mathbf{u}^{\rm in}_{\delta}, \theta^{\rm in}_{\delta})$ is given as \eqref{nonlindt}.
		Suppose that there exists a series of integers  $ s_0 >  s_1  > \cdots > s_4 > s_5  > 3,$ such that the initial data satisfy
		\begin{equation}\label{1000conditifs}
			\mathcal{E}^{\rm in}_{s_0}=:\|(\sigma^{A,\rm in}, \sigma^{B,\rm in}, \mathbf{u}^{\rm in},  \theta^{\rm in})\|_{H^{s_0}} <+\infty
		\end{equation}
		and
		\begin{equation}\label{1nkukthekint}
			\mathcal{E}^{\rm in}_{s_k}=:\|(n^{A,\rm in}_{k},  n^{B,\rm in}_{k},  \mathbf{u}^{\rm in}_{k},  \theta_{k}^{\rm in})\|_{H^{s_k}}<+\infty, \,\, {\rm for} \,\, k=1,2\cdots,5.
		\end{equation}
		Then, there exists some constant $\delta_0>0$ independent of $\varepsilon$ such that, for any $0<\delta<\delta_0$,  \eqref{ORDERZONG} admit $ \mathbf{F}_k  $ for all $0\leq t\leq \tau^{\delta}=O(\frac{1}{\delta})$  with $n^{A}_{k},  n^{B}_{k},  \mathbf{u}_{k},  \theta_{k} \in C([0,\tau^{\delta}), H^{s_k})$ for $k=0,\cdots, 5$.
		Let the initial data $ \mathbf{F}_\varepsilon^{\rm in} = (F_\varepsilon^{A,{\rm in}},F_\varepsilon^{B,{\rm in}})^\top$   of \eqref{MAMAMAMAzhuyaotuimox} is imposed as
		\begin{equation}\label{1initalMain}
			\mathbf{F}_\varepsilon|_{t=0} = \mathbf{F}_\varepsilon^{\rm in} =: \mathbf{F}_0(0)+\sum_{k=1}^5 \varepsilon^k \mathbf{F}_k(0)+ \varepsilon^3 \mathbf{F}_R(0),
		\end{equation}
		with $ F_\varepsilon^{A,{\rm in}}, F_\varepsilon^{B,{\rm in}}\geq 0$, where $\mathbf{F}_k(0) (k=0,\cdots,5)$ are the initial value of $\mathbf{F}_k$ from Step 1-Step 3.
		Let the global bi-Maxwellian $\mu_M$ is defined in \eqref{def1.35} with \eqref{globalMaxwellian} and the initial value of the remainder term $\mathbf{F}_R(0)=(F_{R}^{A}(0),F_{R}^{B}(0))^\top$ is assumed to satisfy
		\begin{equation}\label{remainKZ}
			\mathcal{E}^{\rm in}_R=:	\sum_{\alpha=A,B} \left\{ \left\|\frac{F_{R}^{\rm \alpha}(0)}{\sqrt{\mu^{\alpha,\rm in}_{\delta}}} \right\|_{L^{2}_{x,v} }+ \left\|\left \langle v \right\rangle^{2l}\frac{F_{R}^{\rm \alpha}(0)}{\sqrt{\mu^{\alpha}_{M}}} \right\|_{L^{\infty}_{x,v} }\right\}<\infty, \quad {\rm for} \quad l\geq \frac{25}{4}.
		\end{equation}
		Then, there exists some constant $\varepsilon_0>0$ such that, for $0<\varepsilon\leq\varepsilon_0$, the initial value problem \eqref{MAMAMAMAzhuyaotuimox}  with \eqref{1initalMain} admits a unique solution
		\begin{equation}\label{1Hilbertexpansion}
			\mathbf{F}_\varepsilon(t) =  \mathbf{F}_0(t)+\sum_{k=1}^5 \varepsilon^k \mathbf{F}_k(t)+ \varepsilon^3 \mathbf{F}_R(t),
		\end{equation}
		for all $0\leq t\leq \tau^{\delta}$, where the remainder term satisfies
		\begin{equation}\label{1MEstimate}
			\sup_{0\leq t \leq \tau^{\delta}}\sum_{\alpha=A,B} \left\{ \sqrt{\varepsilon}\left\|\frac{F_{R}^{\rm \alpha}}{\sqrt{\mu^{\alpha}_{\delta}}}(t) \right\|_{L^{2}_{x,v} }+ \varepsilon^2\left\|\left \langle v \right\rangle^{2l}\frac{F_{R}^{\rm \alpha}(t)}{\sqrt{\mu^{\alpha}_{M}}} \right\|_{L^{\infty}_{x,v} }\right\} \leq C.
		\end{equation}
		Furthermore, it implies that
		\begin{equation}\label{zjkytld}
			\sup_{0\leq t \leq \tau^{\delta}}\sum_{\alpha=A,B} \left\{ \left\|\frac{F_{\varepsilon}^{\rm \alpha}-\mu^{\alpha}_{\delta}}{\sqrt{\mu^{\alpha}_{\delta}}} (t)\right\|_{L^{2}_{x,v} }+ \left\|\left \langle v \right\rangle^{2l}\frac{F_{\varepsilon}^{\rm \alpha}(t)-\mu^{\alpha}_{\delta}(t)}{\sqrt{\mu^{\alpha}_{M}}} \right\|_{L^{\infty}_{x,v} }\right\} \leq C \varepsilon \rightarrow 0, \,\, {\rm as}\,\, \varepsilon \rightarrow 0.
		\end{equation}
	\end{thm}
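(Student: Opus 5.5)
The proof follows Guo's $L^2$--$L^\infty$ framework for Hilbert expansions, adapted to the vector setting. There are four steps: construct the coefficients $\mathbf{F}_0,\dots,\mathbf{F}_5$, prove an $L^2$ energy estimate for the remainder, prove a weighted $L^\infty$ estimate along characteristics, and close the two by continuity.

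\textbf{Coefficients.} For $\mathbf{F}_0$, the solvability condition of $\mathbf{L}_\delta\mathbf{f}_1=\mathbf{R}_0$ is $\langle \mathbf{R}_0,\mathbf{X}_i^\delta\rangle=0$ for $i=0,\dots,5$. This condition is exactly the compressible Euler system for the mixture. Its initial data are an $O(\delta)$ perturbation of a constant state, so symmetric hyperbolic theory in $H^{s_0}$ gives a solution on $[0,\tau^\delta]$ with $\tau^\delta=O(1/\delta)$, and the smallness of the perturbation persists on that interval. I also fix a global bi-Maxwellian $\mu_M$ with temperature $\theta_M$ satisfying $\theta_M<\sup\theta_\delta<2\theta_M$; this makes $\mu_\delta^\alpha\le C\mu_M^{\alpha}$ and $\sqrt{\mu_M^{\alpha}}\le C(\mu_\delta^{\alpha})^{1/4+}$. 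For each $k\ge 1$, $\mathcal{P}\mathbf{f}_k$ solves a linearized Euler system with source terms built from $\mathbf{F}_0,\dots,\mathbf{F}_{k-1}$, and $(\mathbf{I}-\mathcal{P})\mathbf{f}_k$ is obtained through $\mathbf{L}_\delta^{-1}$. The decreasing indices $s_0>s_1>\dots>s_5>3$ absorb the loss of derivatives at each step. The outcome is bounds of the form
\[
|\langle v\rangle^{N}F_k^\alpha/\sqrt{\mu_M^{\alpha}}|+|\langle v\rangle^{N}\nabla F_k^\alpha/\sqrt{\mu_M^{\alpha}}|\le C
\]
uniformly on $[0,\tau^\delta]$. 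I expect the $\tau^\delta$-dependence of the linear estimates (growth like $(1+t)^k$) to remain harmless after multiplication by powers of $\varepsilon$, provided $\varepsilon\ll\delta$.

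\textbf{$L^2$ estimate.} Set $\mathbf{f}_R=F_R^\alpha/\sqrt{\mu_\delta^\alpha}$ and take the $L^2$ inner product of \eqref{reeqmain} with it. The coercivity \eqref{Coercivity1.22} for $\mathbf{L}_\delta$ gives a dissipation term $\frac{c_0}{\varepsilon}\|(\mathbf{I}-\mathcal{P})\mathbf{f}_R\|_\nu^2$. The term $(\partial_t+v\cdot\nabla_x)\sqrt{\mu_\delta}/\sqrt{\mu_\delta}$ is cubic in $v$ with coefficient $O(\delta)$. I handle it by splitting velocities into $|v|\le\kappa/\sqrt{\varepsilon}$, where I use the $L^2$ norm, and the complement, where I use $\varepsilon^2 h$ with
\[
h=\langle v\rangle^{2l}F_R/\sqrt{\mu_M}.
\]
Together with the usual nonlinear bound $\varepsilon^2\|\Gamma(\mathbf{f}_R,\mathbf{f}_R)\|\le C\varepsilon^{2}\|h\|_\infty\|\mathbf{f}_R\|_\nu$, this yields
\[
\frac{d}{dt}\|\mathbf{f}_R\|^2+\frac{c_0}{2\varepsilon}\|(\mathbf{I}-\mathcal{P})\mathbf{f}_R\|_\nu^2\le C\big(\delta+\varepsilon\|\varepsilon^{2}h\|_\infty\big)\|\mathbf{f}_R\|^2+C\varepsilon^{2}\|\mathbf{f}_R\|+\dots
\]
Gronwall over a time $O(1/\delta)$ then needs the prefactor to be $O(\delta)$ in total. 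I plan to check this against $\tau^\delta$ and, if it fails, to use the weaker $\sqrt\varepsilon$-scaled bound stated in \eqref{1MEstimate}.

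\textbf{$L^\infty$ estimate.} Write $\mathbf{L}_M=\nu-K$ around $\mu_M$. Applying Duhamel along $x-v s$ twice and splitting $K_w=K_w^{\chi}+K_w^{1-\chi}$ reduces $\|\varepsilon^2h\|_\infty$ to $\|\mathbf{f}_R\|_{L^2}$ on the compact velocity region, plus small multiples of $\|h\|_\infty$. This is where the mixture enters, and it is the main obstacle. For $m^A\neq m^B$ the operator $K_{M,2,w}^{\alpha,1-\chi}$ has no Grad-type kernel bound in the usual form. I will use its decomposition (Lemma \ref{LeMK2ker4444}) into the typical part, which has the single-species kernel estimate $\int|k(v,v')|e^{\epsilon|v-v'|^2}dv'\le C\langle v\rangle^{-1}$ (with $\langle v\rangle^{\gamma-2}$ in the soft case), and the hybrid part, which is either degenerate or exponentially decaying in $|v|$. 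With this, the double Duhamel iteration closes as in Guo's argument. In the soft case $\gamma<0$ the weak dissipation $\nu\sim\langle v\rangle^{\gamma}$ will need an extra time-velocity weight; the condition $l\ge 25/4$ should be exactly what this requires. Combining the $L^2$ and $L^\infty$ bounds by continuity for $\varepsilon\le\varepsilon_0$ gives \eqref{1MEstimate}. Finally, \eqref{zjkytld} follows from the expansion \eqref{1Hilbertexpansion}, because $\varepsilon^k F_k$ for $k\ge 1$ and $\varepsilon^3 F_R$ are all $O(\varepsilon)$ in both norms.
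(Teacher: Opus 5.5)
Your plan is the paper's proof: build $\mathbf{F}_k$ from the Euler and linearized Euler systems, get the $L^2$ inequality of Proposition~\ref{L2estimate} from the coercivity of $\mathbf{L}_\delta$, get the $L^\infty$ bound of Proposition~\ref{proposition4.3} by Duhamel along characteristics using the typical/hybrid splitting of Lemma~\ref{LeMK2ker4444} (with the $\alpha\neq\beta$ hybrid part turned into an ordinary kernel via Remark~\ref{rek26262}, and only the polynomial weight $w$ even for $\gamma<0$, since the bounds $\langle v\rangle^{\gamma}$ and $\langle v\rangle^{\gamma-2}$ from Lemmas~\ref{lemma4.1} and~\ref{LeMK2ker4444} are balanced by $\upsilon^\alpha\sim\langle v\rangle^{\gamma}$), and then close by Gronwall. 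Two slips to fix: because $\delta$ is fixed in this theorem, the constant in \eqref{1MEstimate} may depend on $\tau^\delta$, so an $O(1)$ Gronwall prefactor is fine and your fallback is unnecessary; and the comparison you need is \eqref{1.36}, $C^{-1}\mu_M^\alpha\le\mu_\delta^\alpha\le C(\mu_M^\alpha)^{\tilde q}$ with $\tilde q>\frac12$, since $\mu_\delta^\alpha\le C\mu_M^\alpha$ fails when $\theta_M<\theta_\delta$.
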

	
	%	\begin{remark}We
		%		assume the expansion to be
		%		\begin{equation*}
			%			F_{\varepsilon}^\alpha= \sum_{i=0}^{l}\varepsilon^i F^\alpha_i + \varepsilon^rF_R^{\alpha}.
			%		\end{equation*}
		%		By analyzing the equation  \eqref{reeqmain} governing the remainder, we equate the coefficients of $\varepsilon$
		%		for both $Q^{ \alpha \beta}(F_R^{\alpha}, F_R^{\beta})$
		%		and  $E^{\alpha}$. To maintain balance, we select $l=2r-1$. Finally, we demonstrate why expanding at least five terms is necessary.		
		%	\end{remark}
	%	\begin{remark}We need to particularly emphasize that the existence time t of the aforementioned theorem can be chosen arbitrarily, which differs from the short duration assumed in \cite{[50]Wu2023JDE}.
		%	\end{remark}

	It should be pointed out that the compressible Euler limit could be derived in the time interval $[0,\tau^{\delta}]$ with $\tau^{\delta}=O(\frac{1}{\delta})$ regardless of the strength $\delta$. It is interesting to derive the acoustic system when the fluctuation amplitude $\delta$ satisfies $\delta \to 0 $ as $ \varepsilon \to 0$.  Throughout the paper, we assume that
	\begin{eqnarray}\label{1relation}
		\frac{\varepsilon}{\delta} \rightarrow 0, \quad {\rm as}\quad \varepsilon \rightarrow 0.
	\end{eqnarray}
	For instance, one can take
	$$ \delta=\varepsilon^{\tilde{m}} \quad {\rm for\,\, some}\quad 0<\tilde{m}<1.$$
	Similar to \cite{[22]Guo2010CPAM}, we suppose that the Boltzmann equations for gas mixture \eqref{MAMAMAMAzhuyaotuimox} is defined as
	\begin{equation}\label{defGepa}
		\mathbf{F}_{\varepsilon}=\bm\mu_0 + \delta \mathbf{G}_{\varepsilon},
	\end{equation}
	where $\mathbf{G}_\varepsilon =(G^A_\varepsilon, G^B_\varepsilon)^{\top}$ and  the scaled standard global bi-Maxwellian is denoted as
	\begin{equation}\label{defuoaf}
		\bm{\mu}_0 = \left(\begin{array}{ccc} \mu^A_0 \\[2mm] \mu^B_0 \end{array}\right)=\left(\begin{array}{ccc}
			(\frac{m^{A}}{2\pi})^{\frac{3}{2}} \exp\left(-\frac{m^{A}|v|^2}{2}\right) \\[2mm]
			(\frac{m^{B}}{2\pi})^{\frac{3}{2}}  \exp\left(-\frac{m^{B}|v|^2}{2}\right) \end{array}\right).
	\end{equation}
	
	On one hand, combing with \eqref{1Hilbertexpansion}, it could be shown that
	\begin{eqnarray*}
		\mathbf{G}_\varepsilon =\frac{1}{\delta}\left(\mathbf{F}_0 (t)- \bm{\mu}_0(t)\right) + \frac{\varepsilon}{\delta} \Big\{\sum_{k=1}^5 \varepsilon^{k-1}\mathbf{F}_k(t) +\varepsilon^2 \mathbf{F}_R(t)\Big\}.
	\end{eqnarray*}
	Thus, from \eqref{1dyjhlibsxF0} and \eqref{1MEstimate}, one gets $\mathbf{G}_\varepsilon$
	in the interval $[0,\tau^{\delta})$ as in Theorem \ref{mainthemCPE}.
	
	On the other hand, one derives that $\mathbf{G}_\varepsilon =(G^A_\varepsilon, G^B_\varepsilon)^{\top}$ satisfies
	\begin{eqnarray}\label{Perturbationequation}
		\begin{aligned}
			\partial_t G^\alpha_\varepsilon +v\cdot \nabla_x G^\alpha_\varepsilon-\frac{1}{\varepsilon} \sum_{\beta=A,B} \Big[Q^{\alpha \beta}(\mu^\alpha_0, G^\beta_\varepsilon)+Q^{\alpha \beta}(G^\alpha_\varepsilon, \mu^\beta_0)\Big]=\frac{\delta}{\varepsilon}  \sum_{\beta=A,B}Q^{\alpha \beta }( G^\alpha_\varepsilon, G^\beta_\varepsilon).
		\end{aligned}	
	\end{eqnarray}
	Formally, one gets that
	$$ \left(
	\begin{array}{lll}
		Q^{A A}(\mu^A_0, G^A_\varepsilon)+Q^{A A}(G^A_\varepsilon, \mu^A_0)+Q^{A B}(\mu^A_0, G^B_\varepsilon)+Q^{A B}(G^A_\varepsilon, \mu^B_0)\vspace{2mm}\\
		Q^{B A}(\mu^B_0, G^A_\varepsilon)+Q^{B A}(G^B_\varepsilon, \mu^A_0)+Q^{B B}(\mu^B_0, G^B_\varepsilon)+Q^{B B}(G^B_\varepsilon, \mu^B_0)
	\end{array}\right)
	\rightarrow 0~~{\rm as}~~ \varepsilon \to 0.$$ It means that, for $\alpha =A,B$, one has
	\begin{eqnarray}\label{1dflimit}
		G^{\alpha}_{\varepsilon} \rightarrow G^{\alpha} =  \left\{\sigma^\alpha + m^\alpha \mathbf{u} \cdot \mathbf{v} + \Big(\frac{m^\alpha |v|^2-3}{2}\Big)\theta \right\} \mu^{\alpha}_0, ~~{\rm as}~~ \varepsilon \rightarrow 0.
	\end{eqnarray}
	From the solvability of \eqref{Perturbationequation}, one derives that $\sigma^A, \sigma^B, \mathbf{u}, \theta $ satisfy the acoustic system \eqref{EQF0EPSIONline}. Let the initial data of $\sigma^A, \sigma^B, \mathbf{u}, \theta $ is given as \eqref{nonlindt}, the acoustic limit of \eqref{MAMAMAMAzhuyaotuimox} is stated as follows.
	% Given $(\sigma^A_{\rm in},\sigma^B_{\rm in},\tilde{\mathbf{u}}_{\rm in},\tilde{\theta}_{\rm in})$ with irrotational velocity $\tilde{\mathbf{u}}_{\rm in}$, we denote
	%$\mathbf{\mu}^{\rm}_{\delta}$
	%where
	%\begin{gather}\label{mainthemSOFTCASD}
	%			\mu^{\alpha,\rm in}_{\delta}=	\mu^{\alpha}_{\delta}(0,x,v)=\frac{1+\delta\sigma^{\alpha}_{\rm in}(x)}{[2\pi(1+\delta \tilde{\theta}_{\rm in}(x))]^{\frac{3}{2}}}\exp\{-\frac{|v-\delta \tilde{\mathbf{u}}_{\rm in}(x)|^2}{2(1+\delta \tilde{\theta}_{\rm in}(x))}\}.
	%		\end{gather}	

%	Moreover, we define
%	\begin{equation}\label{defGa}
	%		G^{\alpha}=\left(\sigma^{\alpha}+v\cdot\tilde{\mathbf{u}}+(\frac{m^{\alpha}|v|^2-3}{2})\tilde{\theta}\right)\mu^{\alpha}_0.
	%	\end{equation}
%the above acoustic system \eqref{EQF0EPSIONline}
% where $\mathbf{\bar{R}}^{\rm in}_{k-1}(x,v)=\mathbf{\bar{R}}_{k-1}(0,x,v)$, see \eqref{RNDDYTH}.

\begin{thm}\label{The result for compressible euler limit}
	Let $\mathbf{F}_\varepsilon$ is the solution of \eqref{MAMAMAMAzhuyaotuimox} established in Theorem \ref{mainthemCPE}. Suppose that the relation \eqref{1relation} holds and $\mathbf{F}_\varepsilon$ is denoted as \eqref{defGepa}. Then, for any $\tau>0$, there exist $\varepsilon_0>0$ and $C >0$ such that, for each $0<\varepsilon\leq\varepsilon_0$, it holds
	\begin{equation*}
		\sup_{0\leq t \leq \tau} \sum_{\alpha=A,B}	\left( \|	G^{\alpha}_{\varepsilon} - G^{\alpha} \|_{L^{2}_{x,v} }+\|	G^{\alpha}_{\varepsilon} - G^{\alpha} \|_{L^{\infty}_{x,v} }\right)\leq C \left(\frac{\varepsilon}{\delta}+\delta\right),
	\end{equation*}
	where $G^{\alpha}_{\varepsilon}$ and $G^{\alpha}$ are defined in \eqref{defGepa} and \eqref{1dflimit}.  Particularly, it can be observed that the optimal result is achieved when $\delta=\sqrt{\varepsilon}$.
\end{thm}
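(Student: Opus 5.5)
The argument will split the error $G^\alpha_\varepsilon-G^\alpha$ into a kinetic part and a fluid part, and control each with what Theorem \ref{mainthemCPE} already gives. Fix $\tau>0$. Since $\tau^\delta=O(1/\delta)$ and $\delta\to0$ as $\varepsilon\to0$, choosing $\varepsilon_0$ small guarantees $\tau\le\tau^\delta$. Hence the Hilbert expansion \eqref{1Hilbertexpansion} and the remainder bound \eqref{1MEstimate} hold on $[0,\tau]$. Writing $\mathbf F_\varepsilon=\bm\mu_0+\delta\mathbf G_\varepsilon$ gives
\begin{equation*}
G^\alpha_\varepsilon-G^\alpha=\Big[\frac{F^\alpha_0-\mu^\alpha_0}{\delta}-G^\alpha\Big]+\frac{\varepsilon}{\delta}\Big\{\sum_{k=1}^5\varepsilon^{k-1}F^\alpha_k+\varepsilon^2F^\alpha_R\Big\}.
\end{equation*}

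For the second bracket, the uniform $H^{s_k}$ bounds on $(n^\alpha_k,\mathbf u_k,\theta_k)$ from Theorem \ref{mainthemCPE}, together with $s_k>3$ and Sobolev embedding, give $\|F^\alpha_k\|_{L^2_{x,v}\cap L^\infty_{x,v}}\le C$ on $[0,\tau]$. This uses that $F_k^\alpha=\sqrt{\mu_\delta^\alpha}f_k^\alpha$, where $f_k^\alpha$ is polynomial in $v$ times Gaussian-controlled terms and $\mathbf L_\delta^{-1}$ preserves Gaussian decay. For the remainder term, \eqref{1MEstimate} gives
\begin{equation*}
\varepsilon^2\|F_R^\alpha\|_{L^2}\le C\varepsilon^{3/2}\quad\text{and}\quad \varepsilon^2\|F_R^\alpha\|_{L^\infty}\le C\varepsilon^2\|\langle v\rangle^{2l}F_R^\alpha/\sqrt{\mu_M^\alpha}\|_{L^\infty}\le C,
\end{equation*}
using $\sqrt{\mu_M^\alpha}\le C$ and $\sqrt{\mu^\alpha_\delta}\le C$. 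So the whole second bracket is $O(\varepsilon/\delta)$ in $L^2_{x,v}\cap L^\infty_{x,v}$.

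For the first bracket, write $(n^\alpha_\delta,\mathbf u_\delta,\theta_\delta)=(1+\delta\sigma^\alpha_\delta,\ \delta\mathbf u^\delta,\ 1+\delta\theta^\delta)$, as in the scaling \eqref{nonlindt}. Taylor expansion of the bi-Maxwellian in $\delta$ around $\bm\mu_0$ gives
\begin{equation*}
\frac{\mu^\alpha_\delta-\mu^\alpha_0}{\delta}=\Big\{\sigma^\alpha_\delta+m^\alpha\mathbf u^\delta\cdot v+\tfrac{m^\alpha|v|^2-3}{2}\theta^\delta\Big\}\mu^\alpha_0+\delta\,\mathcal R^\alpha,
\end{equation*}
where $|\mathcal R^\alpha|\le C(1+|v|)^4e^{-c|v|^2}$ times quadratic quantities in the fluctuations. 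These are uniformly bounded in $H^{s_0}$ on $[0,\tau]$ for small $\delta$, so $\mathcal R^\alpha$ is bounded in $L^2_{x,v}\cap L^\infty_{x,v}$. It then remains to show
\begin{equation*}
\|(\sigma^\alpha_\delta-\sigma^\alpha,\ \mathbf u^\delta-\mathbf u,\ \theta^\delta-\theta)\|_{H^{s}}\le C\delta\qquad (s>3/2)\ \text{on } [0,\tau].
\end{equation*}

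\textbf{Main obstacle.} This last comparison between the rescaled compressible Euler system \eqref{EQF0EPSION} and the acoustic system \eqref{EQF0EPSIONline} is the step I expect to be hardest. The rescaled fluctuation solves the acoustic system plus $\delta$ times quadratic terms with the same symmetrizable hyperbolic structure. Both share the initial data $(\sigma^{\alpha,\rm in},\mathbf u^{\rm in},\theta^{\rm in})$. I would take the difference and run a symmetrized $H^{s}$ energy estimate with $s\le s_0-1$, using the uniform $H^{s_0}$ bound of the fluctuation on $[0,\tau]\subset[0,\tau^\delta]$. Gronwall's inequality then gives a bound $C\delta e^{C\tau}$. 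The delicate point is symmetrizing the two-species Euler system with the separate densities $\sigma^A,\sigma^B$, so that the commutator terms close; for this I would use the entropy variables of the mixture. Combining the three estimates yields the claimed bound $C(\varepsilon/\delta+\delta)$, which is optimized by $\delta=\sqrt\varepsilon$.
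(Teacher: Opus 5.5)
Your proposal is correct and is essentially the paper's argument. The paper also writes $G^\alpha_\varepsilon-G^\alpha=(F^\alpha_\varepsilon-\mu^\alpha_\delta)/\delta+(\mu^\alpha_\delta-\mu^\alpha_0-\delta G^\alpha)/\delta$, bounds the first piece by $C\varepsilon/\delta$ using \eqref{zjkytld} and the second by $C\delta$ via Taylor expansion (Lemma \ref{222maxdel}), and your comparison $\|(\sigma^\alpha_\delta-\sigma^\alpha,\mathbf u^\delta-\mathbf u,\theta^\delta-\theta)\|_{H^s}\le C\delta$ is exactly Lemma \ref{lem3232inp}, since that difference equals $\delta(\sigma^\alpha_{\delta,d},\mathbf u_{\delta,d},\theta_{\delta,d})$. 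The step you flag as the main obstacle is milder than you expect: the difference solves the constant-coefficient acoustic system \eqref{EQF0EPSIONline} with zero initial data and a source equal to $\delta$ times a smooth function of the rescaled Euler fluctuation and its gradient, bounded in $H^{s_0-1}$ by Lemma \ref{cpesystemlem}, so the constant diagonal symmetrizer with conserved energy $|\sigma^A|^2+|\sigma^B|^2+(m^A+m^B)|\mathbf u|^2+3|\theta|^2$ closes the estimate without commutators or entropy variables.
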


%	\begin{remark}
	%		The above theorem constitutes the main conclusion of this paper, whose results characterize the acoustic limit of the Boltzmann equation for unequal mass gas mixtures.
	%		?The initial Hilbert expansion \eqref{initalMain} and condition \eqref{remainKZ} ensure that
	%		\begin{equation*}
		%			\sum_{\alpha=A,B} \{ \|	G^{\alpha}_{\varepsilon}(0) - G^{\alpha}(0) \|_{L^{2}_{x,v} }+\|	G^{\alpha}_{\varepsilon}(0) - G^{\alpha}(0) \|_{L^{\infty}_{x,v} }\}
		%		\end{equation*}
	%		is  initially bounded by $\frac{\varepsilon}{\delta}+\delta$. Particularly, it can be observed that when $\delta=\sqrt{\varepsilon}$, the optimal result is achieved.
	%	\end{remark}

\noindent{\bf Notations}: We use notation $\Vert \cdot\Vert_{L^{p}_{x,v} } $ to denote the norm of $L^p$ space with respect to the variables $(x,v) \in \mathbb{R}^3  \times \mathbb{R}^3$,  and $| \cdot|_{L^p_v} $, $| \cdot|_{L^p_x} $ are the $L^p$ norms   w.r.t. the variable $v$ or $x$ ( $1\leq p \leq \infty$). For vector functions $\mathbf{f}(t,x,v)=(f^A,f^B)^\top$, its $L^p$ norms are denoted by
\begin{equation*}
	\Vert \mathbf{f} \Vert_{L^{p}_{x,v}}=\Vert f^A\Vert_{L^{p}_{x,v}}+\Vert f^B \Vert_{L^{p}_{x,v}},\qquad | \mathbf{f} |_{L^{p}_{v}}=| f^A|_{L^{p}_{v}}+| f^B|_{L^{p}_{v}}.
\end{equation*}
The notations  $\left \langle \cdot, \cdot	\right \rangle_{x,v}$ and $\left\langle \cdot, \cdot\right \rangle_v$ are the inner product of the function space $L^2_{x,v}$ and $L^2_{v}$ respectively. For vector functions $\mathbf{f}$ and $\mathbf{g}$ ,  their inner product in $L^2_v$ and $L^2_{x,v}$ are denoted as
\begin{gather*}
	\left \langle \mathbf{f}, \mathbf{g}\right \rangle_v = \left \langle f^A, g^A	\right \rangle_v + \left \langle f^B, g^B	\right \rangle_v, \\
	\left \langle \mathbf{f}, \mathbf{g}\right \rangle_{x,v} = \left \langle f^A, g^A	\right \rangle_{x,v} + \left \langle f^B, g^B	\right \rangle_{x,v}.
\end{gather*}
Let $\ell=(\ell_1,\ell_2,\ell_3)$ be a multi-index, its $s-$th order derivatives are $\partial_{x}^{\ell} =\partial_{x_1}^{\ell_1} \partial_{x_2}^{\ell_2} \partial_{x_3}^{\ell_3}  $ with $|\ell|=s $.  $W_x^{s,p}$ denotes the standard Sobolev space with corresponding norm $\Vert \cdot\Vert_{W_x^{s,p}} $. Moreover,  we write $W_x^{s,2}$ as $H_x^s$.  We also define $\left \langle v \right\rangle= 1+|v|$, and the collision frequency function
\begin{equation}\label{ffqzdf}
	\upsilon^{\alpha}=   \sum_{\beta=A,B} \int_{\mathbb{R}^3\times \mathbb{S}^2}
	B^{ \alpha \beta}(|v-v_*|, \cos \theta) \mu^{\beta}(v_*) d\sigma dv_*.
\end{equation}
For $-3<\gamma\leq1$, it can be proved that $\upsilon^{\alpha} \thicksim \left \langle v \right\rangle^{\gamma}$, we introduce the following weighted $L^2$ norm
\begin{gather*}
	\Vert f \Vert_{\nu}^2 = \iint_{\mathbb{R}^3 \times \mathbb{R}^3} |f|^2 \left \langle v \right\rangle^{\gamma} dxdv, \qquad  | f |_{\nu}^2 = \int_{\mathbb{R}^3 } |f|^2 \left \langle v \right\rangle^{\gamma}dv.
\end{gather*}
The letter $C$ is a constant, which may vary from line to line. $J \lesssim K$ means $J \leq C K$. If both hold $J \lesssim K$ and $K \lesssim J$, then we use the notation $J\thicksim K$.

This paper is structured as follows: Section 2 establishes the properties of solutions to the compressible Euler system \eqref{EQF0EPSION} and linear hyperbolic systems \eqref{linarFLsys}. Combining this with residual estimates, Theorem \ref{mainthemCPE} is proven. We firstly derive equations the expansion, including the existence of the solution for the compressible Euler limit-Theorem \ref{mainthemCPE} as well as the existence and bound of the terms in the expansion.  Then, we prove Theorem \ref{mainthemCPE} by establishing the estimates of the remainder terms by  $L^2-L^{\infty}$ estimates  Notably, it analyzes the velocity decay of the $K_{M,2,w}^{\alpha,c}$ operator under mass inequality conditions and provides a rigorous justification. We establish the acoustic limit results under the framework of small perturbations in Section 3. It derives the acoustic system by linearizing the compressible Euler equations in the vicinity of small perturbations around the equilibrium state (1,1,0,1), and establishes the existence and boundedness estimates of the perturbations. We further expands the solution of the compressible Euler equations to the second-order in small perturbations, deriving the linear equation satisfied by the second-order perturbation coefficients, and establishes the existence and boundedness estimates of the solution. Then we provide an error estimate between $\bf{\mu}_{\delta}$ and its first-order expansion with respect to small perturbations using the Taylor expansion method. Finally, we yield the acoustic limit for a mixture of two particle species with unequal masses by the above discussion.

\section{The compressible Euler limit}

\subsection{The determination of $\mathbf{F}_k~~(k=0,1,\cdots,5)$.}
We firstly determine $\mathbf{F}_0$ as following
\begin{equation}\label{dyjhlibsxF0}
	\mathbf{F}_0 =	\left(
	\begin{array}{cccc}
		F_0^{A}\\
		F_0^{B}
	\end{array}
	\right)
	=
	\left(
	\begin{array}{cccc}
		\frac{n_{\delta}^A(m^A)^{3/2} }{(2\pi \theta_{\delta})^{3/2}}e^{-\frac{m^A |v-\mathbf{u}_{\delta}|^2}{2\theta_{\delta}}} \vspace{3pt}\\
		\frac{n^B_{\delta}(m^B)^{3/2} }{(2\pi \theta_{\delta})^{3/2}}e^{-\frac{m^B |v-\mathbf{u}_{\delta}|^2}{2\theta_{\delta}}}
	\end{array}
	\right)=: \left(
	\begin{array}{cccc}  \mu^A_{\delta} \\ \mu^B_{\delta}\end{array}
	\right)  		
\end{equation}
with the initial data  $ \mathbf{F}_0(0)= ( \mu^{A,\rm in}_{\delta},\mu^{B,\rm in}_{\delta})^\top $. Here the initial value is imposed as
\begin{equation}\label{nonlindt}
	\begin{split}
		%(n^{A,\rm{in}}_{\delta}, n^{B,\rm{in}}_{\delta}, \mathbf{u}^{\rm{in}}_{\delta}, \theta^{\rm{in}}_{\delta})=
		(n^A_{\delta}, n^B_{\delta}, \mathbf{u}_{\delta}, \theta_{\delta})|_{t=0} &= (n^{A,\rm{in}}_{\delta}, n^{B,\rm{in}}_{\delta}, \mathbf{u}^{\rm{in}}_{\delta}, \theta^{\rm{in}}_{\delta}).
	\end{split}	
\end{equation}
which is given as
\begin{equation*}
	(n^{A,\rm{in}}_{\delta}, n^{B,\rm{in}}_{\delta}, \mathbf{u}^{\rm{in}}_{\delta}, \theta^{\rm{in}}_{\delta})=	(1+\delta\sigma^{A,\rm in}, 1+\delta\sigma^{B, \rm in},\delta \mathbf{u}^{\rm in},1+\delta \theta^{\rm in}),
\end{equation*}
where the strength of the initial perturbation is denoted as $\delta$ and
\begin{equation}\label{000conditifs}
	\mathcal{E}^{\rm in}_{s_0}=:\|(\sigma^{A,\rm in}, \sigma^{B,\rm in}, \mathbf{u}^{\rm in},  \theta^{\rm in})\|_{H^{s_0}}<+\infty, \quad \quad s_0\geq9.
\end{equation}

It is necessary to get the equations of $n^A_{\delta}, n_\delta^B,\mathbf{u}_\delta,\theta_\delta$.
%By a direct calculation, $(n^A,n^B,\mathbf{u},\theta)$ satisfy the following relation:
%\begin{equation}
%	\int_{\mathbb{R}^3} F_0^{\alpha} dv = n^{\alpha}_{\delta}, \quad \int_{\mathbb{R}^3} v F_0^{\alpha} dv = n^{\alpha}_{\delta}\mathbf{u}_{\delta},\quad \int_{\mathbb{R}^3}|v|^2 F_0^{\alpha} dv = n^{\alpha}_{\delta} |\mathbf{u}_{\delta}|^2+3n^{\alpha}_{\delta}\theta_{\delta},
%\end{equation}
%and we define
%\begin{equation*}
%	\rho_{\delta}=m^An^A_{\delta}+m^Bn^B_{\delta}, \quad \qquad n_{\delta}=n^A_{\delta}+n^B_{\delta}.
%\end{equation*}
%The first line of equation $\eqref{ORDERZONG}_1$ can be written in a vector form,
From the solvability of $\mathbf{L}_\delta$, the equation \eqref{1operator1} gives $\mathbf{R}_0 \in \mathcal{N}^{\bot}$, that is $\langle\mathbf{R}_0, \mathbf{X}_i^\delta\rangle_v=0$. It derives to
%\begin{equation}
%	\left(
%	\begin{array}{cccc}
	%		( \partial_t +v \cdot \nabla_x)F_0^A\\
	%		( \partial_t +v \cdot \nabla_x)F_0^B
	%	\end{array}
%	\right)=\sum_{i+j=1}
%	\left(
%	\begin{array}{cccc}
	%		Q^{AA}(F_i^A,F_j^A)+Q^{AB}(F_i^A,F_j^B)\vspace{3pt}\\
	%		Q^{BA}(F_i^B,F_j^A)+Q^{BB}(F_i^B,F_j^B)
	%	\end{array}
%	\right).
%\end{equation}
%Taking inner product with six collision invariants$ \{\mathbf{e}_1, \mathbf{e}_2, v_1 \mathbf{m}, v_2 \mathbf{m}, v_3 \mathbf{m}, |v|^2 \mathbf{m}\}$ (see \eqref{mabasis}), then one can obtain the system of $(n^{A}_{\delta},n^{B}_{\delta},\mathbf{u}_{\delta},\theta_{\delta})$:
\begin{equation}\label{EQF0EPSION}
	\left \{
	\begin{array}{lll}
		\partial_t n^{A}_{\delta} +{\rm div}(n^{A}_{\delta}\mathbf{u}_{\delta})=0, \\[2mm]
		\partial_t n^{B}_{\delta} +{\rm div}(n^{B}_{\delta}\mathbf{u}_{\delta})=0, \\[2mm]
		\partial_t  \mathbf{u}_{\delta}
		+(\mathbf{u}_{\delta} \cdot \nabla \mathbf{u}_{\delta}) + \frac{n_{\delta}\nabla \theta_{\delta}}{\rho_{\delta}}+ \frac{\theta_{\delta}\nabla n_{\delta}}{\rho_{\delta}}
		=0, \\[2mm]
		\partial_t \theta_{\delta} + \mathbf{u}_{\delta} \cdot \nabla \theta_{\delta} + \frac{2}{3} \theta_{\delta} \nabla \cdot \mathbf{u}_{\delta} = 0,
	\end{array}
	\right.
\end{equation}
where
\begin{equation*}
	\rho_{\delta}=m^An^{A}_{\delta}+m^Bn^{B}_{\delta}, \quad \qquad n_{\delta}=n^{A}_{\delta}+n^{B}_{\delta}. \\
\end{equation*}

%Then we define
%\begin{equation}\label{ininninnn}
%	\rho^{\rm{in}}_{\delta}=m^An^{A,\rm{in}}_{\delta}+m^Bn^{B,\rm{in}}_{\delta}, \quad \qquad n^{\rm{in}}_{\delta}=n^{A,\rm{in}}_{\delta}+n^{B,\rm{in}}_{\delta}.
%\end{equation}
%W%e are interested in the smooth solution of \eqref{EQF0EPSION} when the initial data is a smooth perturbation from a constant state with strength $\delta$.
The classical theory of symmetric hyperbolic systems shows that \eqref{EQF0EPSION} with \eqref{nonlindt} admits smooth solution
at least in the time interval $O(\frac{1}{\delta})$. The result is summarized in the following lemma.
\begin{lema}\label{cpesystemlem}
	
	Let $\delta_1>0$ be sufficiently small such that $(n^{A,\rm in}_\delta, n^{B,\rm in}_\delta,  \theta^{\rm in}_\delta)>0$ for all $0<\delta<\delta_1$. Then, there are constants $C_0, C_1>0$ such that
	\eqref{EQF0EPSION}-\eqref{nonlindt} admits a family of classical solutions
	\begin{equation*}
		(n^A_{\delta}, n^B_{\delta}, \mathbf{u}_{\delta}, \theta_{\delta})= (1+\delta \sigma^A, 1+\delta \sigma^B, \delta \mathbf{u}, 1+\delta \theta)
	\end{equation*}
	with
	\begin{equation*}
		(\sigma^A, \sigma^B, \mathbf{u}, \theta)\in C([0, \tau^\delta]; H^s) \cap C^1([0, \tau^\delta]; H^{s-1}),
	\end{equation*}
	for some $\tau^\delta \geq \frac{C_0}{\delta}.$
	Moreover, it satisfy $n^A_{\delta}, n^B_{\delta},\theta_{\delta}>0$ and
	\begin{equation}\label{2boundzeroorder}
		\|(n^A_{\delta}, n^B_{\delta}, \mathbf{u}_{\delta}, \theta_{\delta})-(1,1,0,1)\|_{C([0, \tau^\delta]; H^s) \cap C^1([0, \tau^\delta]; H^{s-1})}\leq C_1 \delta.
	\end{equation}
\end{lema}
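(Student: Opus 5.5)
The plan is to write the Euler system \eqref{EQF0EPSION} as a quasilinear symmetrizable hyperbolic system and apply the standard local existence theory together with a rescaling in time. This rescaling shows that the lifespan is of order $1/\delta$ for perturbations of size $\delta$.

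First I would substitute the ansatz $(n^A_\delta,n^B_\delta,\mathbf{u}_\delta,\theta_\delta)=(1+\delta\sigma^A,1+\delta\sigma^B,\delta\mathbf{u},1+\delta\theta)$ into \eqref{EQF0EPSION}. For $U=(\sigma^A,\sigma^B,\mathbf{u},\theta)$ this gives
\[
\partial_t U+\sum_{j=1}^3 A_j(\delta U)\partial_{x_j}U=0,
\]
where the matrices $A_j(\cdot)$ depend smoothly on their argument near $0$. Here $A_j(0)$ is the constant-coefficient acoustic matrix, and $\delta U$ enters only through the coefficients, e.g.\ $\frac{n_\delta}{\rho_\delta}$, $\frac{\theta_\delta}{\rho_\delta}$, $\frac23\theta_\delta$ and $\delta\mathbf{u}\cdot\nabla$. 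Next I would exhibit a symmetrizer $A_0(\delta U)$, positive definite and smooth near $0$, with $A_0A_j$ symmetric. Since $\rho_\delta,n_\delta,\theta_\delta>0$, a natural candidate is diagonal: weight $\theta_\delta/n^\alpha_\delta$-type entries on the density components (or better work with $n_\delta$ and $n^A_\delta/n_\delta$), $\rho_\delta$ on the velocity, and $\frac{3n_\delta}{2\theta_\delta}$ on the temperature. Alternatively one can pass to entropy variables, since the mixture Euler system comes from a kinetic model with an H-theorem, which guarantees symmetrizability. I expect this to be the main bookkeeping step, because the two densities are coupled only through $n_\delta$ in the momentum equation. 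I would handle it by changing variables to $(n_\delta,c=n^A_\delta/n_\delta,\mathbf{u}_\delta,\theta_\delta)$. The concentration $c$ is purely transported, $\partial_tc+\mathbf{u}_\delta\cdot\nabla c=0$, and the remaining variables form a single-gas-like system with pressure $n\theta$ and density $\rho=n(m^Ac+m^B(1-c))$, which is symmetrizable as in the classical case.

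Then I would perform the $H^s$ energy estimate. Apply $\partial_x^\ell$ with $|\ell|\le s$, $s>5/2$, and take the inner product with $A_0\partial^\ell U$. Commutator (Moser) estimates give
\[
\frac{d}{dt}\mathcal{E}(t)\le C\,\delta\,\|\nabla U\|_{L^\infty}\,\mathcal{E}(t)\le C\delta\,\mathcal{E}(t)^{3/2},
\]
where $\mathcal{E}\sim\|U\|_{H^s}^2$ is equivalent to the $A_0$-weighted energy as long as $\delta\|U\|_{L^\infty}$ is small. The key point is that every commutator and every $\partial_tA_0$ term carries a factor $\delta$, because $A_0(0)$ and $A_j(0)$ are constant. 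A bootstrap then gives $\|U(t)\|_{H^s}\le 2C\mathcal{E}^{\rm in}_{s}$ for $t\le C_0/\delta$, with $C_0$ depending on $\mathcal{E}^{\rm in}_{s_0}$. Combined with local existence for symmetric hyperbolic systems (Kato/Majda) and a continuation argument, this yields $U\in C([0,\tau^\delta];H^s)$ with $\tau^\delta\ge C_0/\delta$.

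Finally, the time regularity $C^1([0,\tau^\delta];H^{s-1})$ comes from reading $\partial_tU$ off the equation. Positivity follows from $\|\delta U\|_{L^\infty}\lesssim\delta\|U\|_{H^s}\le C\delta$, once $\delta<\delta_1$ is small. The bound \eqref{2boundzeroorder} is then immediate with $C_1$ a multiple of the uniform $H^s$ bound.
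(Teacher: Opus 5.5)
Your proof is correct and follows essentially the standard route the paper relies on. The paper gives no argument beyond citing Wu--Yang, which, like Guo--Jang--Jiang for a single gas, uses exactly this symmetric-hyperbolic energy estimate: every commutator and $\partial_t A_0$ term carries a factor $\delta$, so $\frac{d}{dt}\mathcal{E}\lesssim \delta\,\mathcal{E}^{3/2}$ and the lifespan is $\gtrsim 1/\delta$.

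One simplification: the diagonal symmetrizer $A_0=\mathrm{diag}\big(\theta_\delta/n^A_\delta,\ \theta_\delta/n^B_\delta,\ \rho_\delta I_3,\ \tfrac{3n_\delta}{2\theta_\delta}\big)$ already symmetrizes the system in the original variables. It is the one the paper writes down in its second-order perturbation lemma, so your change to $(n_\delta, n^A_\delta/n_\delta,\mathbf{u}_\delta,\theta_\delta)$ is unnecessary, though harmless.
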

\noindent The proof of this lemma is in \cite{[50]Wu2023JDE} page 427. $\hfill{\square}$\\
%\begin{remark} For  simplicity, we use notations $\mu^{\alpha}_M, \theta_M$ here, though they both actually depend on $\delta$, and we have omitted the subscript $\delta$. Furthermore, for any time interval, \eqref{globalMaxwellian} can be satisfied by choosing a sufficiently small $\delta$.
%\end{remark}

Next, we determine the terms  $\mathbf{F}_k$, or $\mathbf{f}_k=(f^A_k,f^B_k)^{\top} (k=1,2,\cdots,5)$ in the following.

Suppose that one has $(f_l^A, f_l^B)^\top$ for $k\leq k$ in hand, from $\eqref{ORDERZONG}_3$, we obtain
\begin{eqnarray}\label{RNDDYTH}
	\begin{aligned}
		\mathbf{L}_{\delta}  \mathbf{f}_{k}%&=\frac{1}{\sqrt{\mu^\alpha_\delta}}\Big[-(\partial_t +v \cdot \nabla_{x}) ( \sqrt{\mu^\alpha_\delta} f_k^{\alpha})
		%		+\displaystyle{\sum_{i+j=k+1\atop i,j\geq1} \sum_{\beta=A,B}} Q^{ \alpha\beta}(\sqrt{\mu^\alpha_\delta}f_i^{\alpha},\sqrt{\mu^\alpha_\delta}f_j^{\beta})\Big]\\
		=\mathbf{R}_{k-1}, \quad \mathbf{L}_{\delta}  \mathbf{f}_{k+1}%&=\frac{1}{\sqrt{\mu^\alpha_\delta}}\Big[-(\partial_t +v \cdot \nabla_{x}) ( \sqrt{\mu^\alpha_\delta} f_k^{\alpha})
		%		+\displaystyle{\sum_{i+j=k+1\atop i,j\geq1} \sum_{\beta=A,B}} Q^{ \alpha\beta}(\sqrt{\mu^\alpha_\delta}f_i^{\alpha},\sqrt{\mu^\alpha_\delta}f_j^{\beta})\Big]\\
		=\mathbf{R}_{k}.
	\end{aligned}
\end{eqnarray}
On one hand, one could get the micro-part of $\mathbf{f}_{k}$ is expressed as
\begin{eqnarray}\label{micropart1.301}
	(\mathbf{I}-\mathcal{P})\mathbf{f}_{k} = \mathbf{L}_\delta^{-1} \mathbf{R}_{k-1}.
\end{eqnarray}
On the other hand, the solvability condition of \eqref{RNDDYTH}$_2$  means that $	\left\langle \mathbf{R}_k, \mathbf{X}_{j}^\delta \right\rangle_v =0$ for all $j=0,\cdots,5.$ It derives to the following equations
\begin{equation}\label{linarFLsys}
	\left\{
	\begin{array}{lllll}
		\partial_t n^{A}_{k} +{\rm div}_x(n^{A}_{k} \mathbf{u}_{\delta}+n^{A}_{\delta}\mathbf{u}_{k})=0,\vspace{3pt} \\
		\partial_t n^{B}_{k} +{\rm div}_x(n^{B}_{k}\mathbf{u}_{\delta}+n^{B}_{\delta}\mathbf{u}_{k})=0,%\vspace{3pt}
		\\
		%\hspace{1cm}
		\rho_{\delta}(\partial_{t}\mathbf{u}_{k}+\mathbf{u}_{k}\cdot\nabla_{x}\mathbf{u}_{\delta}+\mathbf{u}_{\delta}\cdot\nabla_{x}\mathbf{u}_{k})-\Big[\frac{n^{A}_{k}}{n^{A}_{\delta}}\nabla_{x}(n^{A}_{\delta}\theta_{\delta}) + \frac{n^{B}_{k}}{n^{B}_{\delta}}\nabla_{x}(n^{B}_{\delta}\theta_{\delta})\Big] \vspace{3pt}\\
		\hspace{3cm}+\nabla \Big(\frac{n^A_{\delta} \theta_k+3\theta_{\delta} n_k^A}{3}\Big)+\nabla \Big(\frac{n^B_{\delta} \theta_k+3\theta_{\delta} n_k^B}{3}\Big)=\mathbf{H}_{k-1},\vspace{3pt} \\
		n_{\delta} \Big(\partial_t \theta_{k} + \frac{2}{3}(\theta_{k} {\rm div} \mathbf{u}_{\delta} + 3\theta_{\delta} {\rm div} \mathbf{u}_{k})	+\mathbf{u}_{\delta}\cdot \nabla_{x} \theta_{k} + 3\mathbf{u}_{k} \cdot \nabla_{x} \theta_{\delta} \Big) = {\rm g}_{k-1},
	\end{array}
	\right.
\end{equation}
where
\begin{eqnarray}\label{espq1.30}
	\begin{aligned}
		(\mathbf{H}_{k-1})_i= &- \sum_{\alpha=A,B}m^{\alpha}\sum_{j=1}^{3} \partial_{x_j}\Big(\frac{\theta_{\delta}}{m^\alpha} \int_{\mathbb{R}^3} \mathbf{A}^{\alpha}_{i,  j} f_k^{\alpha} dv\Big),   \\
		{\rm g}_{k-1}=& - 2 \sum_{\alpha=A,B} m^{\alpha} \sum_{i=1}^{3} \partial_{x_i} \Big((\frac{\theta_{\delta}}{m^\alpha})^{\frac{3}{2}}\int_{\mathbb{R}^3} \mathbf{B}^{\alpha}_{i }f_k^{\alpha} dv + \sum_{j=1}^{3} u_{\delta,j} \frac{\theta_{\delta}}{m^\alpha} \int_{\mathbb{R}^3} \mathbf{A}^{\alpha}_{i,  j}f_k^{\alpha} dv\Big)\\
		&  -2\mathbf{u}_k \cdot \mathbf{H}_{k-1}.
	\end{aligned}
\end{eqnarray}
The Burnett functions  $\mathbf{A}^{\alpha}_{i,  j},  \mathbf{B}^{\alpha}_{i } (\alpha=A, B) $ are defined as follows
\begin{equation*}
	\begin{split}
		\mathbf{A}^{\alpha}_{i,  j}:= &\Big(\frac{m^{\alpha}(v_i-\mathbf{u}_{\delta,i})(v_j-\mathbf{u}_{\delta,j})}{\theta_{\delta}} -\delta_{ij} \frac{m^{\alpha}|v-\mathbf{u}_{\delta}|^2}{3\theta_{\delta}}\Big)\sqrt{\mu_{\delta}^\alpha}, \\
		\mathbf{B}^{\alpha}_{i } :=& \frac{v_i-\mathbf{u}_{\delta,i}}{2}\sqrt{\frac{m^\alpha}{\theta_{\delta}}}(\frac{m^{\alpha}|v-\mathbf{u}_{\delta}|^2}{\theta_{\delta}}-5)\sqrt{\mu_{\delta}^\alpha}.
	\end{split}
\end{equation*}
Since $\mathbf{A}^{\alpha}_{i,  j} \in \mathcal{N}^{\bot}$ and $\mathbf{B}^{\alpha}_{i }\in \mathcal{N}^{\bot}$, the source term $\mathbf{H}_{k-1}$ and $\rm{g}_{k-1}$ in \eqref{espq1.30} depend only on the microscopic part of $(f_k^A, f_k^B)^\top$.
With the imposed initial data
\begin{equation}\label{kinivaluefk}
	(n^{A}_{k}, n^{B}_{k}, \mathbf{u}_{k}, \theta_{k})(0,x,v)=(n^{A,\rm in}_{k}, n^{B,\rm in}_{k}, \mathbf{u}^{\rm in}_{k}, \theta_{k}^{\rm in}),
\end{equation}
one could yield the solution $(n_k^A,n_k^B,\mathbf{u}_k,\theta_k)$ of  \eqref{linarFLsys} in the time interval $[0,\tau^{\delta})$. Thus, one yield the macroscopic part of $\mathcal{P}\mathbf{f}_k $ as
\begin{eqnarray}\label{macropart1.23}
	\begin{split}
		\mathcal{P} \mathbf{f}_k
		= &\frac{n^{A}_{k}}{\sqrt{n^{A}_{\delta}}}\mathbf{X}_0^\delta+\frac{n^{B}_{k}}{\sqrt{n^{B}_{\delta}}}\mathbf{X}_1^\delta
		+\sum_{j=2,3,4}\sqrt{\frac{n_{\delta}}{\theta_{\delta}}} u_k^{j-1} \mathbf{X}_j^\delta
		+\sqrt{\frac{n_{\delta}}{6}}\frac{\theta_k}{\theta_{\delta}}\mathbf{X}_5^\delta \\
		=& \left(\begin{array}{lll}
			\Big(\frac{n^A_{k}}{n^A_{\delta}} + \mathbf{u}_{k} \cdot \frac{m^A(v-\mathbf{u}_{\delta})}{\theta_{\delta}} +
			\frac{\theta_k}{6\theta_{\delta}}(\frac{m^A |v-\mathbf{u}_{\delta}|^2}{\theta_{\delta}} -3 )\Big) \sqrt{\mu^A_{\delta}}\vspace{3pt}\\
			\Big( \frac{n^B_{k}}{n^B_{\delta}} + \mathbf{u}_{k} \cdot \frac{m^B(v-\mathbf{u}_{\delta})}{\theta_{\delta}} +
			\frac{\theta_k}{6\theta_{\delta}}(\frac{m^B|v-\mathbf{u}_{\delta}|^2}{\theta_{\delta}} -3 )\Big) \sqrt{\mu^B_{\delta}}
		\end{array}\right),
	\end{split}
\end{eqnarray}
where %$(n_k^A, n_k^B, \mathbf{u}_k, \theta_k)$ are density of the mass, velocity and temperature from $\mathbf{f}_k$. For simplicity, we denote
\begin{eqnarray}\label{fluid def}
	\begin{split}
		&n_k= n_k^A+n_k^B, \quad  n_k^{\alpha} =\int_{\mathbb{R}^3} F_k^{\alpha} dv = \int_{\mathbb{R}^3} f_k^{\alpha} \sqrt{\mu_{\delta}^\alpha} dv,\\
		&  \rho_k= m^A n_k^A+m^B n_k^B, \,\,n_k^{\alpha} u^j +n^{\alpha}_{\delta} u^j_k= \int_{\mathbb{R}^3} v_j F_k^\alpha  dv= \int_{\mathbb{R}^3} v_j f_k^\alpha \sqrt{\mu_{\delta}^\alpha} dv,\\
		& n_{\delta}^\alpha \theta_{k} + 3 \theta_{\delta} n^{\alpha}_k = \int_{\mathbb{R}^3}m^{\alpha} |v-\mathbf{u}_{\delta}|^2 F_k^\alpha dv= \int_{\mathbb{R}^3}m^{\alpha} |v-\mathbf{u}_{\delta}|^2 f_k^\alpha \sqrt{\mu_{\delta}^\alpha} dv.
	\end{split}
\end{eqnarray}

We suppose 
$m^s = \min\{m^A,m^B\}$, for $s \in \{A,B\}$. 
The result is concluded in the following lemma.

\begin{lemma}\label{FI2KM1EST}
	Let $(n^A_{\delta}, n^B_{\delta}, \mathbf{u}_{\delta}, \theta_{\delta} )$ be the smooth solution of  established in Lemma \ref{cpesystemlem}.
	Suppose that there exists a series of integers  $ s_0 >  s_1  > \cdots > s_4 > s_5  > 3,$ such that initial data satisfy
	\begin{equation}\label{nkukthekint}
		\mathcal{E}^{\rm in}_{s_k}=:\|(n^{A,\rm in}_{k}, n^{B,\rm in}_{k}, \mathbf{u}^{\rm in}_{k}, \theta_{k}^{\rm in})\|_{H^{s_k}}<+\infty, \quad for \quad  k=1,2\cdots5.
	\end{equation}
	Then, $ F_{k}^{\alpha} = \sqrt{\mu^\alpha_{\delta}}f_{k}^{\alpha} $ $(\alpha = A,B, k = 1, 2,\cdots , 5)$ of \eqref{MAINEXPFPHIKKK} can be constructed from \eqref{micropart1.301}-\eqref{fluid def} for $ t \in [0, \tau^{\delta}] $. Moreover, there are series $ p_0 > p_1 > \cdots > p_5 \geq 4 $ and $\frac{\max\{m^A,m^B\}}{2(m^A+m^B)}<b<\frac{1}{2}$  such that
	\begin{align}\label{assumptions2}
		\begin{split}
			{\displaystyle  \sup_{t\in [0,\tau^{\delta}]} \sum_{k=1}^5 \sum_{\alpha=A,B}\sum_{\delta_k+|\delta_k' |\leq s_k} \|(1+|v|)^{p_k} [\mu^s_{\delta}]^{-b} \partial^{\delta_k}_t \partial_x^{\delta'_k} f^{\alpha}_k  \|_{L^2_xL^{\infty}_v}} \leq C(\tau, m^A,m^B, \mathcal{E}^{\rm in} ), 	\end{split}
	\end{align}
	where $\mu^s_{\delta}=\frac{n_{\delta}^s(m^s)^{3/2} }{(2\pi \theta_{\delta})^{3/2}}e^{-\frac{m^s |v-\mathbf{u}_{\delta}|^2}{2\theta_{\delta}}}$ and $\mathcal{E}^{\rm in} = \sum_{k=0}^5\mathcal{E}_{s_k}^{\rm in}$.
\end{lemma}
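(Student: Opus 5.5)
The plan is an induction on $k=1,\dots,5$: each $\mathbf{f}_k$ is split as $\mathbf{f}_k=\mathcal{P}\mathbf{f}_k+(\mathbf{I}-\mathcal{P})\mathbf{f}_k$, and the two parts are estimated separately. The micro part comes from \eqref{micropart1.301} and the macro part from the linear hyperbolic system \eqref{linarFLsys}. The induction hypothesis at step $k$ is that, for all $l<k$, the coefficients $(n_l^A,n_l^B,\mathbf{u}_l,\theta_l)$ lie in $C([0,\tau^\delta];H^{s_l})$ with bounds depending only on $\tau,m^A,m^B,\mathcal{E}^{\rm in}$, and that \eqref{assumptions2} holds with $p_l$. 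We need two orders of source at each step: $\mathbf{R}_{k-1}$ gives the micro part of $\mathbf{f}_k$, and the solvability of $\mathbf{L}_\delta\mathbf{f}_{k+1}=\mathbf{R}_k$ gives the equations for $\mathcal{P}\mathbf{f}_k$. Each derivative loss is absorbed by the strict decrease $s_0>s_1>\dots>s_5>3$.

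\textbf{Micro part.} By \eqref{RNDDYTH} the source has the form
$$\mathbf{R}_{k-1}=-\mu_\delta^{-1/2}(\partial_t+v\cdot\nabla_x)\big(\sqrt{\mu_\delta}\,\mathbf{f}_{k-1}\big)+\sum_{i+j=k,\ i,j\ge1}\Gamma_\delta(\mathbf{f}_i,\mathbf{f}_j).$$
Here $\Gamma_\delta$ is the vector nonlinear operator. Its $(t,x)$-derivatives are controlled by the induction hypothesis and by Lemma \ref{cpesystemlem}, at the cost of one derivative and a polynomial factor in $v$. The key input is a weighted estimate for $\mathbf{L}_\delta^{-1}$: for $\mathbf{g}\in\mathcal{N}(\mathbf{L}_\delta)^\perp$,
$$|(1+|v|)^{p}[\mu^s_\delta]^{-b}\mathbf{L}_\delta^{-1}\mathbf{g}|_{L^\infty_v}\lesssim |(1+|v|)^{p+1}[\mu^s_\delta]^{-b}\mathbf{g}|_{L^\infty_v},$$
with a larger loss of powers in the soft case. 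To prove it, write $\mathbf{L}_\delta=\nu-\mathbf{K}$ and $\mathbf{h}=\mathbf{L}_\delta^{-1}\mathbf{g}$, so that $\mathbf{h}=\nu^{-1}(\mathbf{K}\mathbf{h}+\mathbf{g})$. First get an $L^2_v$ bound for $\mathbf{h}$ from the coercivity \eqref{Coercivity1.22}. Then apply the standard Caflisch/Guo bootstrap, iterating $\mathbf{K}$ twice to pass from $L^2_v$ to the weighted $L^\infty_v$ norm.

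\textbf{Main obstacle.} The difficulty is the kernel of $\mathbf{K}^\alpha$ conjugated by the weight $[\mu^s_\delta]^{-b}$. The cross terms $Q^{\alpha\beta}$ with $m^\alpha\neq m^\beta$ produce Gaussians $\exp(-c\,m^\beta|\cdot|^2)$ in mixed variables. One must show that the conjugated kernel $k_w^\alpha(v,v_*)$ is still integrable in $v_*$, with decay $\langle v\rangle^{-1}$ (and the corresponding $\langle v\rangle^{\gamma-1}$-type decay in the soft case). I would handle this by completing the square in each exponent, $\alpha\beta$ pair by pair. The condition $b>\frac{\max\{m^A,m^B\}}{2(m^A+m^B)}$, together with $b<\frac12$, should be exactly what makes the quadratic form in $(v,v_*)$ negative definite after multiplying by $[\mu^s_\delta]^{-b}(v)[\mu^s_\delta]^{b}(v_*)$. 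Using $\mu^s$ with $m^s=\min\{m^A,m^B\}$, the slowest-decaying Gaussian, lets one weight serve both components. This is the typical/hybrid splitting of $K^{\alpha}_{M,2,w}$ announced in the introduction. Its unequal-mass hybrid piece has an exponentially decaying factor, and is therefore harmless once the exponent is checked to be negative. I expect this sign computation to be the delicate step.

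\textbf{Macro part.} With $(\mathbf{I}-\mathcal{P})\mathbf{f}_k$ known, the sources $\mathbf{H}_{k-1}$ and ${\rm g}_{k-1}$ in \eqref{espq1.30} involve only the micro part, integrated against the Burnett functions $\mathbf{A}^\alpha_{ij}$ and $\mathbf{B}^\alpha_i$. These sources therefore lie in $H^{s_k}$, since the micro part is controlled in $H^{s_{k-1}-1}$, up to time derivatives converted through the equations. The system \eqref{linarFLsys} is linear and symmetrizable hyperbolic, with coefficients close to the constant state $(1,1,0,1)$ in $C^1H^{s_0-1}$ by Lemma \ref{cpesystemlem}. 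A standard $H^{s_k}$ energy estimate with a symmetrizer then gives
$$\frac{d}{dt}E_k\lesssim \delta E_k+\|(\mathbf{H}_{k-1},{\rm g}_{k-1})\|_{H^{s_k}}E_k^{1/2}.$$
By Gronwall, $E_k(t)\le C e^{C\delta t}(\cdots)$, which is bounded on $[0,\tau^\delta]$ because $\delta\tau^\delta=O(1)$ (or on $[0,\tau]$ with $C(\tau)$). Sobolev embedding and the explicit formula \eqref{macropart1.23} then give \eqref{assumptions2} for $\mathcal{P}\mathbf{f}_k$: the macro part is a polynomial times $\sqrt{\mu^\alpha_\delta}$, and $[\mu^s_\delta]^{-b}\sqrt{\mu^\alpha_\delta}$ is bounded since $b<\tfrac12$ and $m^\alpha\ge m^s$. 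Choosing $p_k=p_{k-1}-$(loss) fixes the descending sequence $p_0>\dots>p_5\ge4$ and closes the induction.
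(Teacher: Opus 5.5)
Your induction follows the construction the paper sketches in Section 2.1: micro part $\mathbf L_\delta^{-1}\mathbf R_{k-1}$, macro part from \eqref{linarFLsys}, whose sources see only the micro part. The paper gives no further proof of this lemma.

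\textbf{The gap.} It sits in the step you flag as delicate, and it is more than a sign check. The Caflisch/Guo bootstrap cannot be run with the \emph{common} weight $[\mu^s_\delta]^{-b}$ on both components for $b$ in the stated range. First, the lower bound $b>\frac{\max\{m^A,m^B\}}{2(m^A+m^B)}$ cannot be what makes a kernel estimate work: \eqref{assumptions2} for some $b$ implies it for every smaller $b$.

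Now take $m^\alpha<m^\beta$ (so $m^s=m^\alpha$), $M=m^\alpha+m^\beta$, and $\mathbf u_\delta=0$ by Galilean invariance. Consider the cross gain term $\frac{1}{\sqrt{\mu^\alpha_\delta}}Q^{\alpha\beta}_{\mathrm{gain}}(\mu^\alpha_\delta,\sqrt{\mu^\beta_\delta}g^\beta)$, whose integrand is $B^{\alpha\beta}\sqrt{\mu^\alpha_\delta(v')\mu^\beta_\delta(v_*)}\,g^\beta(v'_*)$. Multiply by $[\mu^s_\delta(v)]^{-b}[\mu^s_\delta(v'_*)]^{b}$ and use $m^\alpha|v|^2+m^\beta|v_*|^2=m^\alpha|v'|^2+m^\beta|v'_*|^2$. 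The exponent becomes
\[
\frac{1}{4\theta_\delta}\Big[-(1-2b)m^\alpha|v'|^2-(1+2b)m^\beta|v_*|^2+2b(m^\beta-m^\alpha)|v'_*|^2\Big].
\]
At the head-on configuration $v_*=0$, $\omega=v/|v|$, $|v|=r$, one has $v'=-\frac{m^\beta-m^\alpha}{M}v$ and $v'_*=\frac{2m^\alpha}{M}v$. The exponent there equals
\[
\frac{m^\alpha(m^\beta-m^\alpha)\,r^2}{4\theta_\delta M^2}\Big[2b(m^\beta+3m^\alpha)-(m^\beta-m^\alpha)\Big].
\]
This is positive whenever $b>\frac{m^\beta-m^\alpha}{2(m^\beta+3m^\alpha)}$. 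That covers every $b$ the lemma allows, because $\frac{m^\beta}{2M}>\frac{m^\beta-m^\alpha}{2(m^\beta+3m^\alpha)}$.

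Consequently the conjugated kernel grows like $e^{cr^2}$ near this configuration (hard spheres, say), and $\mathbf K$ is unbounded on the common-weighted $L^\infty_v$ space. The unequal-mass ``hybrid'' piece you called harmless is exactly what breaks.

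The same configuration breaks your induction for the quadratic sources. Knowing only $|f^\alpha_i|,|f^\beta_j|\lesssim[\mu^s_\delta]^{b}$, the quantity $[\mu^s_\delta(v)]^{-b}\Gamma^{\alpha\beta}_{\mathrm{gain}}(f^\alpha_i,f^\beta_j)(v)$ carries the factor $\exp\big(\frac{2b(m^\alpha)^2(m^\beta-m^\alpha)}{\theta_\delta M^2}r^2\big)$.

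\textbf{The fix.} Run the whole induction, both the $\mathbf L_\delta^{-1}$ bootstrap and the bounds on $\Gamma$, with component-wise weights $[\mu^\alpha_\delta]^{-b}$ on the $\alpha$-component, for any $0<b<\frac12$. The identity $\mu^\alpha_\delta(v')\mu^\beta_\delta(v'_*)=\mu^\alpha_\delta(v)\mu^\beta_\delta(v_*)$ then makes every exponent negative.
\begin{itemize}
\item The cross term above becomes $-\frac{1}{4\theta_\delta}\big[(1-2b)m^\alpha|v'|^2+(1+2b)m^\beta|v_*|^2\big]$. For $m^\alpha\neq m^\beta$ this gives $e^{-c(|v|^2+|v'_*|^2)}$ after the change of variables $v_*\mapsto v'_*$ of Remark \ref{rek26262}.
\item The $\Gamma$ gain term only picks up $e^{-(1+2b)m^\beta|v_*|^2/(4\theta_\delta)}$.
\end{itemize}
The stated bound then follows from $[\mu^s_\delta]^{-b}\lesssim[\mu^\alpha_\delta]^{-b}$ for every $b$ in the stated range.

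\textbf{Two smaller points.}
\begin{itemize}
\item Derivative count: the sources $\mathbf H_{k-1},\mathrm{g}_{k-1}$ are divergences of moments of $(\mathbf I-\mathcal P)\mathbf f_k\in H^{s_{k-1}-1}$. By your own count they lie in $H^{s_{k-1}-2}$, so you need $s_k\le s_{k-1}-2$, not merely $s_k<s_{k-1}$.
\item Time interval: $\delta\tau^\delta=O(1)$ controls the factor $e^{C\delta t}$ but not $\int_0^t\|(\mathbf H_{k-1},\mathrm{g}_{k-1})\|_{H^{s_k}}$. For $k\ge2$ these sources are in general not $O(\delta)$, so your Gronwall argument yields a constant depending on the length of the time interval, not one uniform in $\delta$.
\end{itemize}
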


\subsection{The estimates of the remainder}
To prove this theorem, we need to establish a uniform estimate of the remainder, which is divided into two parts ($L^2$ estimate and $L^\infty$ estimate). We first rewrite the remainder as
\begin{equation}\label{littlef3.1}
	F_R^{\alpha} = 	\sqrt{\mu^\alpha_{\delta}}f^\alpha_R, \quad{\rm for} ~\alpha=A,B.  		
\end{equation}
From \eqref{reeqmain}, we derive $\mathbf{f}_R=(f_R^A, f_R^B)^\top$ satisfies the following equation
\begin{equation}\label{equationoff}
	\begin{split}
		\partial_t \mathbf{f}_R & +v\cdot\nabla_{x}\mathbf{f}_R + \frac{1}{\varepsilon} \mathbf{L}_\delta \mathbf{f}_R
		=  \mathbf{E}_1+\mathbf{E}_2+ \mathbf{E}_3+ \mathbf{E}_4.
	\end{split}	
\end{equation}
Here $\mathbf{E}_i= (E_i^A,E_i^B)^\top (i=1,2,3,4)$, and
\begin{equation*}
	\begin{split}
		E_1^{\alpha} = & \frac{(\partial_t+v\cdot \nabla_{x})\sqrt{\mu^\alpha_{\delta}}}{\sqrt{\mu^\alpha_{\delta}}}f_R^\alpha,\quad E_2^{\alpha}= \varepsilon^2 \displaystyle{\sum_{\beta=A,B} \Gamma^{ \alpha \beta}( f_R^{\alpha},  f_R^\beta ) },\\
		E_3^{\alpha}=& \sum_{i=1}^{5}\sum_{\beta=A,B}\varepsilon^{i-1}\Big[\Gamma^{ \alpha \beta}(f^\alpha_i,f^\beta_R)+\Gamma^{\alpha\beta }(f^\alpha_R, f^\beta_i)\Big],\\
		E_4^{\alpha}= & -\frac{(\partial_t+v\cdot \nabla_{x})}{\sqrt{\mu^\alpha_{\delta}}}(\sqrt{\mu^\alpha_{\delta}}f_5^\alpha)+{\small \sum_{ i,j \leq 5, i+j\geq 6 } \sum_{\beta=A,B}\frac{\varepsilon^{i+j-6}}{\sqrt{\mu^\beta_{\delta}}}\Big[\Gamma^{\alpha\beta}(f^\alpha_i,f^\beta_j)
			+\Gamma^{ \alpha \beta}(f^\alpha_j,f^\beta_i)\Big]}.
	\end{split}
\end{equation*}
Here the (non-symmetric) bilinear form  $\Gamma^{\alpha \beta}$ is given by
\begin{eqnarray}\label{gain loss part}
	\begin{split}
		\Gamma^{\alpha \beta }(g_1,g_2)\triangleq & \frac{1}{\sqrt{\mu^{\alpha }_{\delta}}}Q^{ \alpha \beta}(\sqrt{\mu^{\alpha }_{\delta}}g_1,  \sqrt{\mu^{\beta }_{\delta}}g_2)\\
		= &  \frac{1}{\sqrt{\mu^{\alpha }_{\delta}}}\int_{\mathbb{R}^3\times \mathbb{S}^2}|v-v_*|^{\gamma}b^{\alpha \beta}(\theta)\\
		&  \times \left(\sqrt{\mu^{\alpha }_{\delta}(v') \mu^{\beta }_{\delta}(v_*')}g_1(v')g_2(v_*')
		-\sqrt{\mu^{\alpha }_{\delta}(v)\mu^{\beta }_{\delta}(v_*)}g_1(v)g_2(v_*)\right)d\sigma dv_* \\
		=:& J_{\rm gain}^{\alpha\beta} - J_{\rm loss}^{\alpha\beta}.
	\end{split}
\end{eqnarray}

For the $L^2$ estimates of $\mathbf{f}_R$, we introduce the following  global bi-Maxwellian
\begin{gather}\label{def1.35}
	\mu_{M}=
	\left(
	\begin{array}{cccc}
		\mu_{M}^A\\
		\mu_{M}^B
	\end{array}
	\right)
	=
	\left(
	\begin{array}{cccc}
		\frac{({m^A})^{3/2}}{(2\pi \theta_M)^{3/2}}	e^{- \frac{m_A|v|^2}{2\theta_M}}\\
		\frac{({m^B})^{3/2}}{(2\pi \theta_M)^{3/2}}	e^{- \frac{m_B|v|^2}{2\theta_M}}
	\end{array}
	\right),
\end{gather}

From \eqref{2boundzeroorder}, there exists some sufficiently small $\delta_2>0$ such that, for all $0<\delta\leq \delta_2$,  one can choose $\theta_M$  satisfies
\begin{equation}\label{globalMaxwellian}
	\theta_M\leq\mathop{\rm min}_{(t,x)\in [0,\tau^\delta]\times \mathbb{R}^3} \theta_{\delta}(t,x)\leq\mathop{\rm max}_{(t,x)\in [0,\tau^\delta]\times \mathbb{R}^3} \theta_{\delta}(t,x)\leq \frac{m^A+m^B}{\max\{m^A,m^B\}} \theta_M.
\end{equation}
Thus, from \eqref{2boundzeroorder}, there exist constants $C $ and
$\frac{m^A}{m^A+m^B}<\tilde{q}<1$ such that
\begin{equation}\label{1.36}
	C^{-1} \mu^{\alpha}_M \leq \mu^{\alpha}_{\delta}  \leq C  (\mu^{\alpha}_M)^{\tilde{q}}, \quad {\rm for}\quad \alpha=A,B.
\end{equation}

Furthermore,  we define
\begin{equation}\label{littleh3.2}
	h^\alpha_R=  \frac{(1+|v|^2)^{l} }{\sqrt{\mu^\alpha_M}} F_R^\alpha =   \frac{w }{\sqrt{\mu^\alpha_M}} F_R^\alpha\quad {\rm for} ~~\alpha=A,B
\end{equation}
%and denote that
%\begin{equation}
%	\mathbf{h}_R = \left(
%	\begin{array}{ccc}
	%		h^A_{R}\\
	%		h^B_R
	%	\end{array}
%	\right).
%\end{equation}
%for any fixed
%\begin{center}
%	$\beta \geq 10- |\gamma|$
%\end{center}
where $w(v)=\left \langle v \right\rangle^{l}$ with $  l >\frac{21}{4}- \gamma$. The $L^2$  estimate for the remainder is list in the following proposition.

\begin{proposition}\label{L2estimate}
	Let $(n^{A}_{\delta},n^{B}_{\delta},\mathbf{u}_{\delta},\theta_{\delta})\in L^{\infty}([0,\tau^{\delta}),H^{s_0})$ be the solution of the hyperbolic system \eqref{EQF0EPSION} obtained in Lemma \ref{cpesystemlem}. Let $\mathbf{f}_R=(f^A_R,f^B_R)^\top $%$
	satisfies \eqref{equationoff} and $\mathbf{h}_R=(h^A_R,h^B_R)^\top $ is defined as \eqref{littleh3.2}. Then, there exist constants $\varepsilon_0 >0$ and $C$ such that for all $0<\varepsilon < \varepsilon_0$, it holds
	\begin{equation}\label{MainresultL2}
		\begin{split}
			\frac{d}{dt}\Vert \mathbf{f}_R\Vert_{L^{2}_{x,v}}^{2}+\frac{c_0}{2\varepsilon}\Vert(\mathbf{I}-\mathcal{P})\mathbf{f}_R\Vert_{\nu}^2
			\leq C \left(\sqrt{\varepsilon}\Vert\varepsilon^{\frac{3}{2}}\mathbf{h}_R\Vert_{L^{\infty}_{x,v}}+1\right)
			(\Vert \mathbf{f}_R\Vert_{L^{2}_{x,v}}^{2}+\Vert \mathbf{f}_R\Vert_{L^{2}_{x,v}}).
		\end{split}	
	\end{equation}
\end{proposition}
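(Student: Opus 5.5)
The proof is the standard $L^2$ energy estimate in Guo's Hilbert-expansion framework. We take the $L^2_{x,v}$ inner product of \eqref{equationoff} with $\mathbf{f}_R$. The transport term $v\cdot\nabla_x\mathbf{f}_R$ integrates to zero. The coercivity \eqref{Coercivity1.22}, applied to $\mathbf{L}_\delta$ with a constant $c_0$ uniform in $(t,x)$ because $(n^\alpha_\delta,\mathbf{u}_\delta,\theta_\delta)$ stays $O(\delta)$-close to $(1,1,0,1)$ by Lemma \ref{cpesystemlem}, gives
\[
\frac12\frac{d}{dt}\|\mathbf{f}_R\|^2_{L^2_{x,v}}+\frac{c_0}{\varepsilon}\|(\mathbf{I}-\mathcal{P})\mathbf{f}_R\|_\nu^2\le \sum_{i=1}^4\langle \mathbf{E}_i,\mathbf{f}_R\rangle_{x,v}.
\]
We then bound the four source terms one at a time. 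Each bound either absorbs part of the dissipation into $\frac{c_0}{2\varepsilon}\|(\mathbf{I}-\mathcal{P})\mathbf{f}_R\|_\nu^2$, or produces the right-hand side of \eqref{MainresultL2}.

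\emph{The linear terms $\mathbf{E}_4$ and $\mathbf{E}_3$.} $\mathbf{E}_4$ involves only the known coefficients $f_i$. By Lemma \ref{FI2KM1EST}, the weight $[\mu^s_\delta]^{-b}$ with $b<1/2$ controls the Gaussian factors, so $\|\mathbf{E}_4\|_{L^2}\le C$ and $\langle\mathbf{E}_4,\mathbf{f}_R\rangle\le C\|\mathbf{f}_R\|_{L^2}$; this is the source of the linear term $\|\mathbf{f}_R\|$ on the right. For $\mathbf{E}_3$ we use the standard bound on $\Gamma^{\alpha\beta}$, namely $|\langle\Gamma(g_1,g_2),g_3\rangle|\lesssim$ (sup-norm of $g_1$ in a velocity weight) $\times\,\|g_2\|_\nu\|g_3\|_\nu$. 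Since $\Gamma$ vanishes when tested against collision invariants, we may replace $g_3$ by $(\mathbf{I}-\mathcal{P})\mathbf{f}_R$. This gives a bound $C\|\mathbf{f}_R\|_\nu\|(\mathbf{I}-\mathcal{P})\mathbf{f}_R\|_\nu$, where $C$ comes from the $L^\infty_x$ bounds on $f_i$ supplied by Sobolev embedding ($s_k>3$) in Lemma \ref{FI2KM1EST}. We split $\mathbf{f}_R=\mathcal{P}\mathbf{f}_R+(\mathbf{I}-\mathcal{P})\mathbf{f}_R$. The $i=1$ term carries no factor of $\varepsilon$, so it cannot be absorbed naively, and Young's inequality would give $C\varepsilon\|\mathbf{f}_R\|^2$ plus absorbable dissipation. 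To avoid this we use the observation, which is essential here, that the collision-invariant structure reduces the pairing to $(\mathbf{I}-\mathcal{P})$ parts. This yields $\frac{c_0}{8\varepsilon}\|(\mathbf{I}-\mathcal{P})\mathbf{f}_R\|_\nu^2+C\varepsilon\|\mathbf{f}_R\|^2$, which is certainly $\le C\|\mathbf{f}_R\|^2$.

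\emph{The term $\mathbf{E}_1$.} This term is the main obstacle. Because $\nabla_{t,x}(n_\delta,\mathbf{u}_\delta,\theta_\delta)=O(\delta)$, we have $|(\partial_t+v\cdot\nabla_x)\sqrt{\mu^\alpha_\delta}/\sqrt{\mu^\alpha_\delta}|\lesssim \delta\langle v\rangle^3$, which is polynomial growth in $v$ that $\|\mathbf{f}_R\|^2_{L^2}$ alone cannot control. We split the velocity domain at $|v|=\kappa\varepsilon^{-1/2}$ for a suitable $\kappa$, roughly in the style of \cite{[22]Guo2010CPAM}.
\begin{itemize}
\item On $|v|\le\kappa\varepsilon^{-1/2}$: the $\mathcal{P}$ part contributes $C\|\mathbf{f}_R\|^2$. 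The $(\mathbf{I}-\mathcal{P})$ part is handled by Young's inequality, giving $\frac{c_0}{8\varepsilon}\|(\mathbf{I}-\mathcal{P})\mathbf{f}_R\|_\nu^2+C\varepsilon\,\delta^2\langle v\rangle^{6-\gamma}|\mathbf{f}_R|^2$. On this region the weight $\varepsilon\langle v\rangle^{6-\gamma}$ is not yet bounded, so we refine the cut to $|v|\le\kappa\varepsilon^{-1/(6-\gamma)}$, where the term is $O(\|\mathbf{f}_R\|^2)$.
\item On the large-velocity region: we convert to $\mathbf{h}_R$ via $|f^\alpha_R|\le C\,w^{-1}\sqrt{\mu_M/\mu_\delta}\,|h^\alpha_R|$. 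By \eqref{1.36}, $\sqrt{\mu_M/\mu_\delta}$ grows at most like $\mu_M^{-(1-\tilde q)/2}$; this is where the mass-dependent choice of $\theta_M$ in \eqref{globalMaxwellian} matters, since it must be absorbed by the polynomial weight $w=\langle v\rangle^{2l}$ or by the Gaussian in $\mu_\delta$. The contribution is then bounded by $\|\mathbf{h}_R\|_{L^\infty}\|\langle v\rangle^{3}w^{-1}\mathbf{1}_{|v|\ge\cdot}\|_{L^2_v}\|\mathbf{f}_R\|_{L^2}$. With $l>\frac{21}{4}-\gamma$ and the cutoff exponent, this is $\lesssim \varepsilon^{2}\|\mathbf{h}_R\|_{L^\infty}\|\mathbf{f}_R\|=\sqrt{\varepsilon}\,\|\varepsilon^{3/2}\mathbf{h}_R\|_{L^\infty}\|\mathbf{f}_R\|$.
\end{itemize}
Carrying out this power counting so that exactly $\varepsilon^2$ emerges is the delicate step.

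\emph{The nonlinear term $\mathbf{E}_2$.} Again we test only against $(\mathbf{I}-\mathcal{P})\mathbf{f}_R$. One factor is placed in $L^\infty$ using $\mathbf{h}_R$, via $\|\langle v\rangle^{\gamma^+}f_R\|_{L^\infty}\lesssim\|\mathbf{h}_R\|_{L^\infty}$, which again uses \eqref{1.36} and the weight $w$. This gives
\[
\varepsilon^2\|\mathbf{h}_R\|_{L^\infty}\|\mathbf{f}_R\|_\nu\|(\mathbf{I}-\mathcal{P})\mathbf{f}_R\|_\nu\le \frac{c_0}{8\varepsilon}\|(\mathbf{I}-\mathcal{P})\mathbf{f}_R\|^2_\nu+C\varepsilon^5\|\mathbf{h}_R\|^2_{L^\infty}\|\mathbf{f}_R\|^2_\nu.
\]
This is not quite of the required form. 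A more careful split instead puts $\varepsilon^{1/2}\cdot\varepsilon^{3/2}\|\mathbf{h}_R\|_{L^\infty}$ in front of $\|\mathbf{f}_R\|^2$: write $\|\mathbf{f}_R\|_\nu$ as its $\mathcal{P}$ part, bounded by $\|\mathbf{f}_R\|$, plus its $(\mathbf{I}-\mathcal{P})$ part. The latter is absorbed once $\varepsilon^{3}\|\mathbf{h}_R\|_{L^\infty}\ll1$, which we may assume along a continuity argument in the main theorem, or which otherwise enters only through the stated prefactor.

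Summing the four bounds and multiplying by $2$ yields \eqref{MainresultL2}.
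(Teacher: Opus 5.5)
Your overall scheme (energy identity, coercivity of $\mathbf{L}_\delta$, the four source terms, a velocity cut for $\mathbf{E}_1$, one $L^\infty$ factor in $\mathbf{E}_2$) is the Guo--Jang--Jiang argument that the paper imports from \cite{[50]Wu2023JDE}. However, two steps do not go through as written.

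\textbf{The term $\mathbf{E}_2$.} The proposition is unconditional: $C$ and $\varepsilon_0$ are independent of $\mathbf{h}_R$, so that it can be combined with Proposition \ref{proposition4.3} in Gronwall. Your bound instead leaves $\varepsilon^2\|\mathbf{h}_R\|_{L^\infty_{x,v}}\|(\mathbf{I}-\mathcal{P})\mathbf{f}_R\|_\nu^2$ and, after Young, $\varepsilon^5\|\mathbf{h}_R\|^2_{L^\infty_{x,v}}\|\mathbf{f}_R\|^2$. Both require the extra assumption $\varepsilon^3\|\mathbf{h}_R\|_{L^\infty_{x,v}}\ll 1$. For $0<\gamma\le 1$ the first term also cannot be moved into the prefactor, because $\|\cdot\|_\nu$ is not controlled by $\|\cdot\|_{L^2_{x,v}}$ (for $\gamma\le 0$ your bound already closes, since $\|\cdot\|_\nu\le\|\cdot\|_{L^2_{x,v}}$).

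The missing idea is to use no dissipation here at all and to put every velocity weight on the factor measured in $L^\infty$. For cutoff kernels, $|\langle \Gamma^{\alpha\beta}(g_1,g_2),g_3\rangle_v|\le C\,|\langle v\rangle^{k}g_1|_{L^\infty_v}|g_2|_{L^2_v}|g_3|_{L^2_v}$ for $k$ large. The loss part gives this directly. The gain part follows from Cauchy--Schwarz and the involutive change $(v,v_*)\mapsto(v',v_*')$, which has unit Jacobian also when $m^A\neq m^B$. For the loss part with $\gamma\le-\frac32$, put the other $f_R$ factor in $L^\infty$ instead. Since $\langle v\rangle^k|f^\alpha_R|\le C|h^\alpha_R|$ by the left inequality of \eqref{1.36}, this gives
$|\langle\mathbf{E}_2,\mathbf{f}_R\rangle_{x,v}|\le C\varepsilon^2\|\mathbf{h}_R\|_{L^\infty_{x,v}}\|\mathbf{f}_R\|^2_{L^2_{x,v}}=C\sqrt{\varepsilon}\|\varepsilon^{3/2}\mathbf{h}_R\|_{L^\infty_{x,v}}\|\mathbf{f}_R\|^2_{L^2_{x,v}}$.
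This is exactly the quadratic half of the right-hand side of \eqref{MainresultL2}.

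\textbf{The term $\mathbf{E}_1$.} Your power counting fails under the given weight. The Young step that produces $\varepsilon\langle v\rangle^{6-\gamma}|\mathbf{f}_R|^2$ forces the cut down to $K=\kappa\varepsilon^{-1/(6-\gamma)}$. With $w=(1+|v|^2)^l$, the tail $C\|\mathbf{h}_R\|_{L^\infty_{x,v}}\big(\int_{|v|\ge K}\langle v\rangle^{6-4l}dv\big)^{1/2}\|\mathbf{f}_R\|_{L^2_{x,v}}$ then carries only the factor $\varepsilon^{(4l-9)/(2(6-\gamma))}$. That is at least $\varepsilon^2$ only if $l\ge\frac{33}{4}-\gamma$, which the hypothesis $l>\frac{21}{4}-\gamma$ does not provide.

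No Young step is needed:
\begin{itemize}
\item Use $|\mathbf{f}_R|^2\le 2|\mathcal{P}\mathbf{f}_R|^2+2|(\mathbf{I}-\mathcal{P})\mathbf{f}_R|^2$ and bound the first piece by $C\|\mathbf{f}_R\|^2$.
\item On $|v|\le K:=(\kappa/\varepsilon)^{1/(3-\gamma)}$, use $\langle v\rangle^3\le CK^{3-\gamma}\langle v\rangle^{\gamma}$. The second piece is then $\le C\kappa\varepsilon^{-1}\|(\mathbf{I}-\mathcal{P})\mathbf{f}_R\|_\nu^2$, absorbed for small $\kappa$.
\item The tail now carries $\varepsilon^{(4l-9)/(2(3-\gamma))}$, which is at least $\varepsilon^2$ exactly when $l\ge\frac{21}{4}-\gamma$. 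This matches the threshold imposed in \eqref{littleh3.2}.
\end{itemize}

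Two smaller corrections in the same step:
\begin{itemize}
\item The conversion factor satisfies $\sqrt{\mu^\alpha_M/\mu^\alpha_\delta}\le C$ by the left inequality of \eqref{1.36}. The exponential growth $\mu_M^{-(1-\tilde q)/2}$ you quote bounds the reciprocal, and no polynomial weight could absorb it.
\item On $\mathbb{R}^3_x$, the coefficient $\nabla_{t,x}(n^\alpha_\delta,\mathbf{u}_\delta,\theta_\delta)$ must be kept in $L^2_x$ (available from Lemma \ref{cpesystemlem}) in order to pair $\|\mathbf{h}_R\|_{L^\infty_{x,v}}$ with $\|\mathbf{f}_R\|_{L^2_{x,v}}$.
\end{itemize}
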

\noindent The proof of \eqref{MainresultL2} is presented in \cite{[50]Wu2023JDE} page 438-441.\\

We now consider the $L^{\infty}$ estimates of the remainder. Recall the global Maxwellian defined in \eqref{def1.35}, for the vector function $\mathbf{g}=(g^{A},g^{B})^{T}$, we introduce the  linear operator $\mathbf{L}_{M}=(\mathbf{L}_M^{A},\mathbf{L}_M^{B})^{T}$
as following:
\begin{gather}\label{4.2}
	\begin{split}
		\mathbf{L}^\alpha_M \mathbf{g}&=- \frac{1}{\sqrt{\mu^{\alpha}_M}} \sum_{\beta=A,B} \left[Q^{ \alpha\beta}(\mu^\alpha_{\delta},\sqrt{\mu^{\beta}_M} g^{\beta})+Q^{\alpha \beta}(\sqrt{\mu^\alpha_M}g^{\alpha},\mu^\beta_{\delta})\right].
		\vspace{3pt}\\
		&=: \upsilon^{\alpha} g^\alpha + K_M^\alpha \mathbf{g}.
	\end{split}	
\end{gather}
Here, the collision frequency is defined as 
\begin{equation}\label{nuKsuanzidef}
	\upsilon^{\alpha}=   \sum_{\beta=A,B} \int_{\mathbb{R}^3\times \mathbb{S}^2}
	B^{ \alpha \beta}(|v-v_*|, \cos \theta) \mu^{\beta}(v_*) d\omega dv_*, 
\end{equation}
and the operator is decoupled into  
\begin{equation}\label{2GlobalLinearoperator}
	K_{M}^{\alpha} \mathbf{g}= K_{M,1}^{\alpha} \mathbf{g}+K_{M,2}^{\alpha} \mathbf{g},
\end{equation}
with
\begin{equation*}
	\begin{split}
		&	 K_{M,1}^{\alpha} \mathbf{g}=  \frac{1}{\sqrt{\mu_M^\alpha}} \sum_{\beta=A,B} \int_{\mathbb{R}^3 \times \mathbb{S}^2}
		B^{\alpha\beta}(|v-v_*|, \cos \theta)  \mu^\alpha_{\delta}(v) \sqrt{\mu_M^\beta(v_*)}g^\beta(v_*)   d\omega dv_*,\\
		&	 K_{M,2}^{\alpha} \mathbf{g}= - \frac{1}{\sqrt{\mu_M^\alpha}} \sum_{\beta=A,B} \int_{\mathbb{R}^3 \times \mathbb{S}^2}
		B^{\alpha\beta}(|v-v_*|, \cos \theta)  \Big[\mu^\alpha_{\delta}(v')\sqrt{\mu_M^\beta(v_*')}g^\beta(v_*')\\
		& \hspace{6.8cm} +\mu^{\beta}_{\delta}(v_*') \sqrt{\mu_{M}^{\alpha}(v')}\, g^\alpha(v')\Big] d\omega dv_*.
	\end{split}	
\end{equation*}	

Firstly, there exists some constant $C$ depends on $\delta$ such that
\begin{eqnarray*}
	C^{-1} \left \langle v \right\rangle^{\gamma} \leq \upsilon^{\alpha} \leq C \left \langle v \right\rangle^{\gamma}, \quad  {\rm for}\quad \alpha=A,B.
\end{eqnarray*}	

Secondly, we introduce a smooth cut off function $0 \leq \chi_m \leq 1 $ to cancel the singular part in the operators of $K^{\alpha \beta}_M$,  for any small $\delta> 0$, such that%to cancel the singular part in the operators of $K^{\alpha \alpha}$ and $K^{\alpha \beta}$,
\begin{equation}\label{Cutoffunctiondef}
	\chi_m(s) \equiv 1 \quad \textnormal{for}  \, s \leq m, \quad {\rm and} \quad   \chi_m(s) \equiv 0,  \quad \textnormal{for}\, s \geq 2m.
\end{equation}
Let $B^{\alpha \beta}_{\chi}(|v-v_*|,\cos \theta) =: B^{\alpha \beta}(|v-v_*|,\cos \theta) \chi_m (|v-v_*|) $, define the singular parts of the  operators as
\begin{eqnarray*}
	K_{M}^{\alpha,\chi} \mathbf{g}= K_{M,1}^{\alpha,\chi} \mathbf{g}+K_{M,2}^{\alpha,\chi} \mathbf{g},
\end{eqnarray*}
with
\begin{gather}\label{4.51}
	\begin{split}
		& K_{M,1}^{\alpha,\chi} \mathbf{g}=  \frac{1}{\sqrt{\mu_M^\alpha}} \sum_{\beta=A,B} \int_{\mathbb{R}^3 \times \mathbb{S}^2}
		B_{\chi}^{\alpha\beta}(|v-v_*|, \cos \theta)  \mu^\alpha_{\delta}(v) \sqrt{\mu_M^\beta(v_*)}g^\beta(v_*)   d\omega dv_*,\\
		&	 K_{M,2}^{\alpha,\chi} \mathbf{g}= - \frac{1}{\sqrt{\mu_M^\alpha}} \sum_{\beta=A,B} \int_{\mathbb{R}^3 \times \mathbb{S}^2}
		B_{\chi}^{\alpha\beta}(|v-v_*|, \cos \theta)  \Big[\mu^\alpha_{\delta}(v')\sqrt{\mu_M^\beta(v_*')}g^\beta(v_*')\\
		& \hspace{6.8cm} +\mu^{\beta}_{\delta}(v_*') \sqrt{\mu_{M}^{\alpha}(v')}\, g^\alpha(v')\Big] d\omega dv_*.
	\end{split}
\end{gather}
Further, the remainder operator is defined as
\begin{eqnarray*}
	K_{M}^{\alpha,1-\chi} \mathbf{g}= K_{M,1}^{\alpha,1-\chi} \mathbf{g}+K_{M,2}^{\alpha,1-\chi} \mathbf{g},
\end{eqnarray*}
with
\begin{eqnarray}\label{ccccKCdef}
	K_{M,1}^{\alpha,1-\chi} \mathbf{g} = ( K_{M,1}^{\alpha}-  K_{M,1}^{\alpha,\chi}) \mathbf{g},\qquad K_{M,2}^{\alpha,1-\chi} \mathbf{g} = ( K_{M,2}^{\alpha}-  K_{M,2}^{\alpha,\chi}) \mathbf{g}.
\end{eqnarray}
Thus, one has 
\begin{eqnarray*}
	K_{M}^{\alpha} \mathbf{g}=K_{M}^{\alpha,\chi} \mathbf{g}+K_{M}^{\alpha,1-\chi} \mathbf{g}.
\end{eqnarray*}
 We denote
\begin{eqnarray*}
	\begin{split}
		K_{M,w}^{\alpha} (\mathbf{g})%=wK_{M}^{\alpha} (\frac{\mathbf{g}}{w})
		= K_{M,w}^{\alpha,\chi} (\mathbf{g})+K_{M,w}^{\alpha,1-\chi} (\mathbf{g}), 
	\end{split}
\end{eqnarray*}
with
\begin{eqnarray}\label{MWMWMWimpmain}
	\begin{split}
		&K_{M,w}^{\alpha,\chi} (\mathbf{g})=K_{M,1,w}^{\alpha,\chi} (\mathbf{g})+ K_{M,2,w}^{\alpha,\chi} (\mathbf{g})  ~ {\rm where}~  K_{M,i,w}^{\alpha,\chi} (\mathbf{g})=wK_{M,i}^{\alpha,\chi} (\frac{\mathbf{g}}{w}) \quad (i=1,2),\\
		& K_{M,w}^{\alpha,1-\chi} (\mathbf{g})=K_{M,1,w}^{\alpha,1-\chi} (\mathbf{g})+ K_{M,2,w}^{\alpha,1-\chi} (\mathbf{g})  ~ {\rm where}~  K_{M,i,w}^{\alpha,1-\chi} (\mathbf{g})=wK_{M,i}^{\alpha,1-\chi} (\frac{\mathbf{g}}{w}) \quad (i=1,2).
	\end{split}
\end{eqnarray}
 The following lemma gives the $L^{\infty}_v$ estimates of the singular parts $K_{M,1,w}^{\alpha,\chi}, K_{M,2,w}^{\alpha,\chi}$.

\begin{lemma}\label{lemma4.1} Let $-3< \gamma \leq 1$ and $0<m<1$ are given. Then, there exists some constant $C$  independent of $m$ such that
	\begin{gather}\label{4.6}
		|K_{M,w}^{\alpha,1-\chi} \mathbf{g} | \leq C m^{3+\gamma} \left \langle v \right\rangle^{\gamma} | \mathbf{g} |_{L^\infty_v}.
	\end{gather}
\end{lemma}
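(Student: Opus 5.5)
Although the statement is written for $K_{M,w}^{\alpha,1-\chi}$, the sentence before it and the factor $m^{3+\gamma}$ show that it concerns the singular parts $K_{M,1,w}^{\alpha,\chi}$ and $K_{M,2,w}^{\alpha,\chi}$ from \eqref{4.51}. In these parts the kernel carries $\chi_m(|v-v_*|)$, so everything is supported in $|v-v_*|\le 2m$. I will prove \eqref{4.6} for $K_{M,w}^{\alpha,\chi}$. The plan is to bound the pointwise weight factors uniformly, integrate $|v-v_*|^\gamma$ over a ball of radius $2m$, and extract a Gaussian in $v$ that dominates $\langle v\rangle^\gamma$.

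\textbf{Step 1 (kinematics).} From the collision rules,
\[
|v'-v|\le \frac{2m^\beta}{m^\alpha+m^\beta}|v-v_*|\le 4m,\qquad |v'_*-v|\le |v_*-v|+\frac{2m^\alpha}{m^\alpha+m^\beta}|v-v_*|\le 6m .
\]
So on the support all of $v_*,v',v'_*$ lie within distance $6m<6$ of $v$. Since $w=\langle v\rangle^l$, the ratios $w(v)/w(v_*)$, $w(v)/w(v'_*)$ and $w(v)/w(v')$ are bounded by a constant independent of $m$. The same proximity gives, for any Gaussian exponent $c>0$,
\[
e^{-c|z|^2}=e^{-c|v|^2+O(m|v|)+O(m^2)}\quad\text{for } z\in\{v_*,v',v'_*\}.
\]

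\textbf{Step 2 ($K_{M,1,w}^{\alpha,\chi}$).} By \eqref{1.36}, $\mu^\alpha_\delta(v)/\sqrt{\mu^\alpha_M(v)}\le C(\mu_M^\alpha)^{\tilde q-1/2}(v)$. This is a Gaussian $e^{-c|v|^2}$ with $c>0$, because $\tilde q>m^A/(m^A+m^B)$ and the temperature window \eqref{globalMaxwellian} fix the exponents. I bound $\sqrt{\mu_M^\beta(v_*)}\le 1$ and $|g^\beta(v_*)|/w(v_*)\le |\mathbf g|_{L^\infty_v}/w(v_*)$, and use $0\le b\le C_b$. This gives
\[
|K_{M,1,w}^{\alpha,\chi}\mathbf g|\le C e^{-c|v|^2}|\mathbf g|_{L^\infty_v}\int_{|u|\le 2m}|u|^\gamma\,du = C m^{3+\gamma}e^{-c|v|^2}|\mathbf g|_{L^\infty_v}.
\]
Since $e^{-c|v|^2}\le C\langle v\rangle^\gamma$ for every $\gamma\in(-3,1]$, this is the claimed bound.

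\textbf{Step 3 ($K_{M,2,w}^{\alpha,\chi}$, the main obstacle).} Here $m^A\neq m^B$ enters, because the exponents at $v'$ and $v'_*$ carry different masses. I would treat the two terms separately.

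For the first term, using \eqref{1.36},
\[
\frac{\mu^\alpha_\delta(v')\sqrt{\mu^\beta_M(v'_*)}}{\sqrt{\mu^\alpha_M(v)}}\le C\exp\Big\{-\frac{\tilde q m^\alpha}{2\theta_M}|v'|^2-\frac{m^\beta}{4\theta_M}|v'_*|^2+\frac{m^\alpha}{4\theta_M}|v|^2\Big\}.
\]
By Step 1 this is at most $C\exp\{-\kappa|v|^2+Cm|v|\}$ with
\[
\kappa=\frac{1}{4\theta_M}\big[(2\tilde q-1)m^\alpha+m^\beta\big].
\]
I then check $\kappa>0$. This holds when $\tilde q\ge 1/2$, and in general it follows from $\tilde q>m^\alpha/(m^A+m^B)$; I expect this to be where the precise choice of $\tilde q$ and $\theta_M$ is needed. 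It may also help to keep the factor $\sqrt{\mu^\beta_M(v'_*)}$ exactly instead of bounding it by $1$.

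For the second term, $\sqrt{\mu^\alpha_M(v')}/\sqrt{\mu^\alpha_M(v)}=e^{O(m|v|)+O(m^2)}$ by Step 1. Meanwhile $\mu^\beta_\delta(v'_*)\le Ce^{-c|v|^2+Cm|v|}$.

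In both terms, with $m<1$, the product $e^{-\kappa|v|^2+Cm|v|}$ is bounded by $C\langle v\rangle^\gamma$, with $C$ independent of $m$. Combining this with the bounded weight ratios from Step 1 and $|g(v')|,|g(v'_*)|\le |\mathbf g|_{L^\infty_v}$, I integrate $|v-v_*|^\gamma\chi_m$ in $v_*$ and the cutoff angular factor over $\mathbb S^2$. This again yields $Cm^{3+\gamma}\langle v\rangle^\gamma|\mathbf g|_{L^\infty_v}$. Summing over $\beta$ and over the two operators gives \eqref{4.6}.
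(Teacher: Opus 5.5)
Your proof is correct, and you are right that the lemma really concerns the singular part $K^{\alpha,\chi}_{M,w}$. The paper's own proof works on $|v-v_*|\le 2m$ with the factor $\chi_m$.

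The skeleton matches the paper's: a Gaussian factor in $v$ times $\int_{|u|\le 2m}|u|^\gamma\,du\sim m^{3+\gamma}$. The difference is how you treat the Maxwellian quotients in $K^{\alpha,\chi}_{M,2,w}$. The paper uses the exact identity $\mu^\alpha_M(v')\mu^\beta_M(v'_*)=\mu^\alpha_M(v)\mu^\beta_M(v_*)$ from energy conservation. This rewrites both integrands as $[\mu_M(\cdot)]^{\tilde q-1/2}\sqrt{\mu^\beta_M(v_*)}$, see \eqref{4.12}. The first factor is bounded once $\tilde q>1/2$, so no mass-dependent sign condition appears, and proximity is used only to move the Gaussian from $v_*$ to $v$. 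You never invoke the conservation law. Instead you bound each exponential separately via $|v'-v|\le 4m$ and $|v'_*-v|\le 6m$. This costs an $e^{Cm|v|}$ error, which is harmless for $m<1$, plus a positivity check on $\kappa$.

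The paper's route is shorter and structurally cleaner. Yours is more elementary and treats the weight ratios $w(v)/w(v')$ and $w(v)/w(v'_*)$ that actually occur; the paper writes $w(v)/w(v_*)$ throughout. Yours also makes explicit the loss in the exponent when passing from a nearby point to $v$. The paper's line $\mu^\beta_M(v_*)\le C\mu^\beta_M(v)$ glosses over this loss. Only the weaker $\sqrt{\mu^\beta_M(v_*)}\le C[\mu^\beta_M(v)]^{1/4}$, uniformly in $m<1$, holds, and that is all the paper uses afterwards.

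One clean-up: in both Step 2 and Step 3 the only condition you need is $\tilde q>1/2$. The paper's proof secures this by taking $\tilde q>\max\{m^A,m^B\}/(m^A+m^B)$. Your appeal to $\tilde q>m^A/(m^A+m^B)$ does not guarantee that $(\mu^\alpha_M)^{\tilde q-1/2}$ decays in Step 2 when $m^A<m^B$, so drop it in favour of $\tilde q>1/2$.
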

\begin{proof} By \eqref{1.36}, c.f \cite{[22]Guo2010CPAM}, we have
	$$	c_1 \mu_M^{\alpha} \leq   \mu^{\alpha}_{\delta} \leq c_2  (\mu_M^{\alpha})^{\tilde{q}} , \quad {\rm for \,\, some }\quad  \tilde{q} > \frac{\max\{m^A,m^B\}}{m^A+m^B}.$$
	From the conservation law $m^{\alpha}|v'|^{2}+m^{\beta}|v_*'|^2 =m^{\alpha}|v|^2+m^{\beta}|v_*|^2$  for $\alpha , \beta \in \{A,B\}$, we obtain
	\begin{gather}\label{4.12}
		\begin{split}
			\mu^{\alpha}_{\delta}(v')\frac{\sqrt{\mu_M^{\beta}(v_*')}}{\sqrt{\mu_M^{\alpha}(v)}}\leq& c_2  [\mu_M^{\alpha}(v') ]^{\tilde{q}} \frac{\sqrt{\mu_M^{\beta}(v_*')}}{\sqrt{\mu_M^{\alpha}(v)}} \leq C [\mu_M^{\alpha}(v') ]^{\tilde{q}-\frac{1}{2}} \sqrt{\mu_M^{\beta}(v_*)}, \\		
			\mu^{\beta}_{\delta}(v_*')\frac{\sqrt{\mu_M^{\alpha}(v')}}{\sqrt{\mu_M^{\alpha}(v)}}\leq& c_2  [\mu_M^{\beta}(v_*') ]^{\tilde{q}} \frac{\sqrt{\mu_M^{\alpha}(v')}}{\sqrt{\mu_M^{\alpha}(v)}} \leq C [\mu_M^{\beta}(v_*') ]^{\tilde{q}-\frac{1}{2}} \sqrt{\mu_M^{\beta}(v_*)},\\
			\mu^\alpha_{\delta} (v)\frac{\sqrt{\mu_M^{\beta}(v_*)}}{\sqrt{\mu_M^{\alpha}(v)}} \leq& c_2 [\mu_M^{\alpha}(v)]^{\tilde{q}-\frac{1}{2}}\sqrt{\mu_M^{\beta}(v_*)} .
		\end{split}
	\end{gather}
	Since $|v-v_*| \leq 2m < 2$, then one has
	\begin{align*}
		&\mu_M^{\beta}(v_*) =(2\pi \theta_M)^{-\frac{3}{2}} e^{-\frac{m^{\beta}|v_*|^2}{2\theta_M}} \leq (2\pi \theta_M)^{-\frac{3}{2}} e^{-\frac{m^{\beta}\{|v|^2-|v-v_*|^2\}}{2\theta_M}}  \leq C \mu_M^{\beta}(v),\\
		&\frac{w(v)}{w(v_*)}\big[\mu_M^{\beta}(v)\big]^{1/4} \leq C \left \langle v \right\rangle^{\gamma}.
	\end{align*}
	Thus, for all $-3<\gamma\leq1$, one has
	\begin{align*}
		|K_{M,w}^{\alpha,1-\chi} \mathbf{g}| & \leq C \sum_{\beta=A,B}\int_{\mathbb{R}^3 \times \mathbb{S}^2} |v-v_*|^{\gamma} \frac{w(v)}{w(v_*)}\chi_{m}(|v-v_*|)b^{\alpha \beta}(\theta)d\omega dv_* \sqrt{\mu_M^{\beta}(v)}\, | \mathbf{g}  |_{L^\infty_v}\\
		& \leq C m^{3+\gamma} \left \langle v \right\rangle^{\gamma} | \mathbf{g}  |_{L^\infty_v}.
	\end{align*}
	This  completes the proof of Lemma \ref{lemma4.1}.
\end{proof}

The following  lemma  provide detailed estimates for $K_{M}^{\alpha,c}$ operators, which  plays an important role in $L^{\infty}$ estimates.
\begin{lemma}\label{LeMK2ker4444} For $-3<\gamma\leq1$, by the notations in \eqref{2GlobalLinearoperator}-\eqref{MWMWMWimpmain}, the regular part is expressed as $K_{M,w}^{\alpha,1-\chi} = K_{M,1,w}^{\alpha,1-\chi}+K_{M,2,w}^{\alpha,1-\chi}$. Further, $K_{M,1,w}^{\alpha,\chi}, K_{M,2,w}^{\alpha,\chi}$ can be expressed in the following form of the integral
	\begin{eqnarray}\label{lem4311232}
		\begin{split}
			K_{M,1,w}^{\alpha,1-\chi} \mathbf{g}(v)=\sum_{\beta=A,B}\int_{\mathbb{R}^3} k_{M,1}^{\alpha \beta}(v,v_*)\frac{w(v)}{w(v_*)}g^{\beta}(v_*) d v_{*},
		\end{split}
	\end{eqnarray}
	and
	\begin{equation}
		K_{M,2,w}^{\alpha,1-\chi}\mathbf{g}(v) = \underbrace{K_{M,2,w}^{\alpha,1-\chi,(1)}\mathbf{g}(v)}_{\text{Typical part}} + \underbrace{K_{M,2,w}^{\alpha,1-\chi,(2)}\mathbf{g}(v)}_{\text{Hybrid part}}.
	\end{equation}
	where the ``Typical part'' is expressed as
	\begin{eqnarray}\label{lemerew4311234}
		\begin{split}
			K_{M,2,w}^{\alpha,1-\chi,(1)}\mathbf{g}(v)
			&=\sum_{\beta=A,B} \int_{\mathbb{R}^3}\frac{w(v)}{w(v+\frac{2m^{\beta}}{m^{\alpha}+m^{\beta}}u_{\parallel})}k_{M,2}^{\alpha \beta(1)}(v,u_{\parallel})g^{\alpha}(v+\frac{2m^{\beta}}{m^{\alpha}+m^{\beta}}u_{\parallel}) d u_{\parallel},
		\end{split}
	\end{eqnarray}
	and the ``Hybrid part'' is expressed as
	\begin{eqnarray}\label{lem431123512}
		\begin{split}
			K_{M,2,w}^{\alpha,1-\chi,(2)}\mathbf{g}(v)
			&=\prescript{(\alpha \neq \beta)}{}{K}_{M,2,w}^{\alpha,1-\chi,(2)}\mathbf{g}(v)+\prescript{(\alpha = \beta)}{}{K}_{M,2,w}^{\alpha,1-\chi,(2)}\mathbf{g}(v)\\
			&=:\int_{\mathbb{R}^3}\int_{\mathbb{R}^2}\frac{w(v)}{w(v+u_{\perp}+\frac{m^{\beta}-m^{\alpha}}{m^{\alpha}+m^{\beta}}u_{\parallel})}k_{M,2}^{\alpha \beta(2)}(v,u_{\perp},u_{\parallel})\\ &\hspace{5cm}\times g^{\beta}(v+u_{\perp}+\frac{m^{\beta}-m^{\alpha}}{m^{\alpha}+m^{\beta}}u_{\parallel}) d u_{\perp} d u_{\parallel} \qquad \alpha \neq \beta\\
			&\hspace{0.6cm}+\int_{\mathbb{R}^3}	\frac{w(v)}{w(v+u_{\perp})}k_{M,2}^{\alpha \alpha(2)}(v,u_{\perp})g^{\alpha}(v+u_{\perp}) d u_{\perp}.
		\end{split}
	\end{eqnarray}
	There exists a small constant $c>0$ such that
	\begin{eqnarray}\label{a1a4main236}
		\begin{split}
			\frac{w(v)}{w(v_*)}|k_{M,1}^{\alpha \beta}(v,v_*)|&\leq C \,e^{-c(|v|^2+|v_*|^2)},\\
			\frac{w(v)}{w(v+\frac{2m^{\beta}}{m^{\alpha}+m^{\beta}}u_{\parallel})}|k_{M,2}^{\alpha \beta(1)}(v,u_{\parallel})|&\leq C_m \frac{(1+|v|+|u_{\parallel}|)^{\gamma-1}}{|u_{\parallel}|} \, e^{-c(|u_{\parallel}|^2+|v_{\parallel}|^2)}.
		\end{split}
	\end{eqnarray}
	If $\alpha \neq \beta$, 
	\begin{eqnarray}\label{sofd237e}
		\begin{split}
			\frac{w(v)}{w(v+u_{\perp}+\frac{m^{\beta}-m^{\alpha}}{m^{\alpha}+m^{\beta}}u_{\parallel})}|k_{M,2}^{\alpha\beta(2)}(v,u_{\perp},u_{\parallel})|\leq C_m\, e^{-c(|v|^2+|u_{\parallel}|^2+|u_{\perp}|^2)}.
		\end{split}
	\end{eqnarray}
	If $\alpha = \beta$, 
	\begin{eqnarray}\label{ZZZzequagezV}
		\begin{split}
			\frac{w(v)}{w(v+u_{\perp})}|k_{M,2}^{\alpha\alpha(2)}(v,u_{\perp})|&\leq C_m\,  \frac{(1+|v|+|u_{\perp}|)^{\gamma-1}}{|u_{\perp}|} \, e^{-c(|u_{\perp}|^2+|v_{\parallel}|^2)}.
		\end{split}
	\end{eqnarray}
	Moreover, they satisfy
	\begin{eqnarray}\label{sec4inL1decayK12}
		\begin{split}
			&\int_{\mathbb{R}^3}\big|k_{M,2}^{\alpha \beta(1)}(v,u_{\parallel})\big|\frac{w(v)}{w(v+\frac{2m^{\beta}}{m^{\alpha}+m^{\beta}}u_{\parallel})} d u_{\parallel} +\int_{\mathbb{R}^3}\big|k_{M,2}^{\alpha \alpha(2)}(v,u_{\perp})\big|\frac{w(v)}{w(v+u_{\perp})} d u_{\perp} \leq C_m \left \langle v \right\rangle^{\gamma-2}.
		\end{split}
	\end{eqnarray}
	%Hence	the following estimates yield
	%\begin{eqnarray}\label{K1111XVE}
		%| K_{M,1,w}^{\alpha,c}\mathbf{g}(v) |+| (D_x  K_{M,1,w}^{\alpha,c}) \mathbf{g}(v) |+| (D_v K_{M,1,w}^{\alpha,c}) \mathbf{g}(v) | \leq C_{\delta} \left \langle v \right\rangle^{\gamma-2}e^{-\frac{c}{2}|v|^2}  |\mathbf{g} |_{L^\infty_v},
	%\end{eqnarray}
%	and
	%\begin{eqnarray}\label{K222XVE}
		%\begin{split}
		%	| K_{M,2,w}^{\alpha,c}\mathbf{g}(v) |+| (D_x  K_{M,2,w}^{\alpha,c}) \mathbf{g}(v) | &\leq C_{\delta}  \left \langle v \right\rangle^{\gamma-2}  |\mathbf{g} |_{L^\infty_v},\\
		%	| (D_v  K_{M,2,w}^{\alpha,c} )\mathbf{g}(v) | &\leq C_{\delta} \left \langle v \right\rangle^{\gamma-1}  | \left \langle v \right\rangle\mathbf{g} |_{L^\infty_v}.
		%\end{split}
	%\end{eqnarray}
\end{lemma}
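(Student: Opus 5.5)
The approach is the Grad/Carleman representation of the gain terms, adapted to the mass-weighted collision geometry. For $K_{M,1,w}^{\alpha,1-\chi}$ (the loss-type term) nothing new is needed. Integrating out $\omega$ gives the kernel
\[
k_{M,1}^{\alpha\beta}(v,v_*)=\frac{\mu^\alpha_\delta(v)\sqrt{\mu^\beta_M(v_*)}}{\sqrt{\mu^\alpha_M(v)}}\,|v-v_*|^\gamma(1-\chi_m)\int_{\mathbb{S}^2} b^{\alpha\beta}\,d\omega .
\]
By \eqref{1.36} with $\tilde q>1/2$, this is bounded by $C[\mu_M^\alpha(v)]^{\tilde q-1/2}\sqrt{\mu^\beta_M(v_*)}\,|v-v_*|^\gamma$. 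On $|v-v_*|\ge m$ the factor $|v-v_*|^\gamma$ is harmless, and the polynomial ratio $w(v)/w(v_*)$ is absorbed by the Gaussians. This yields the first line of \eqref{a1a4main236}.

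For $K_{M,2}^{\alpha,1-\chi}$ I will decompose $v-v_*=:u$ as $u=u_\parallel+u_\perp$, with $u_\parallel=(u\cdot\omega)\omega$, and change variables from $(v_*,\omega)$ to $(u_\parallel,u_\perp)$. The Jacobian $d\omega\,dv_*\sim 2\,du_\parallel\,du_\perp/|u_\parallel|^2$ is the standard Carleman one. Then
\[
v'=v-\tfrac{2m^\beta}{m^\alpha+m^\beta}u_\parallel,\qquad v_*'=v-u_\perp-\tfrac{m^\beta-m^\alpha}{m^\alpha+m^\beta}u_\parallel .
\]
Up to the sign convention for $u_\parallel,u_\perp$, these are exactly the arguments appearing in \eqref{lemerew4311234} and \eqref{lem431123512}.

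\emph{Typical part.} The term carrying $g^\alpha(v')$ depends only on $u_\parallel$ through $g$. I therefore integrate out $u_\perp$ over the plane orthogonal to $u_\parallel$, exactly as in the single-species Grad estimate. The exponent collects $\mu^\beta_\delta(v_*')\sqrt{\mu^\alpha_M(v')}/\sqrt{\mu^\alpha_M(v)}$. Using energy conservation $m^\alpha|v'|^2+m^\beta|v_*'|^2=m^\alpha|v|^2+m^\beta|v_*|^2$ and $\tilde q>\max m/(m^A+m^B)$, I rewrite it as a Gaussian in $u_\perp$ shifted by $v_\perp$, times $e^{-c(|u_\parallel|^2+|v_\parallel|^2)}$. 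The planar Gaussian integral of $|u|^\gamma$ gives the factor $(1+|v|+|u_\parallel|)^{\gamma-1}$, and the Jacobian leaves $1/|u_\parallel|$. Together these give the second line of \eqref{a1a4main236}. The $L^1$ bound \eqref{sec4inL1decayK12} then follows by integrating in $u_\parallel$, splitting $u_\parallel$ along and perpendicular to $v$. The perpendicular directions produce the extra $\langle v\rangle^{-1}$ (as in Guo's soft-potential computation), while $w(v)/w(v')\lesssim(1+|u_\parallel|)^{2l}$ is absorbed by the Gaussian.

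\emph{Hybrid part.} The term with $g^\beta(v_*')$ has two cases. When $\alpha=\beta$, the shift $\tfrac{m^\beta-m^\alpha}{m^\alpha+m^\beta}u_\parallel$ vanishes, so the argument depends only on $u_\perp$. Interchanging the roles of $u_\parallel$ and $u_\perp$ reduces this to the identical computation above, giving \eqref{ZZZzequagezV} and its part of \eqref{sec4inL1decayK12}.

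When $\alpha\neq\beta$, the argument of $g^\beta$ depends genuinely on both $u_\perp$ and $u_\parallel$, so no integration can be carried out. I instead aim for pointwise Gaussian decay in all variables. The weight is $\mu^\alpha_\delta(v')\sqrt{\mu^\beta_M(v_*')}/\sqrt{\mu^\alpha_M(v)}\lesssim[\mu^\alpha_M(v')]^{\tilde q}\sqrt{\mu^\beta_M(v_*')}/\sqrt{\mu^\alpha_M(v)}$. Applying energy conservation, I bound it by $[\mu^\alpha_M(v')]^{\tilde q-1/2}\sqrt{\mu^\beta_M(v_*)}$, a product of Gaussians in $v'$ and $v_*=v-u$. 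The goal is to show that the quadratic form in $(v,u_\parallel,u_\perp)$ in the exponent is positive definite. This uses $m^\alpha\ne m^\beta$ only to ensure the map $(v,u_\parallel,u_\perp)\mapsto(v',v_*)$ is nondegenerate. The singular Jacobian $|u_\parallel|^{-2}$ and $|u|^\gamma$ are controlled because $|u|\ge m$ on the support of $1-\chi_m$; I will also need $|u_\parallel|$ away from zero, or I will absorb it into $C_m$ via the angular cutoff $b\le C|\cos\theta|=C|u_\parallel|/|u|$, which cancels one power. That yields \eqref{sofd237e}.

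\emph{Main obstacle.} I expect the hardest step to be this positive-definiteness check and the control of the residual $|u_\parallel|^{-1}$. The coefficients of the quadratic form depend on the mass ratio and on $\tilde q$, and I will verify positivity using the constraint on $\tilde q$ from \eqref{1.36}.
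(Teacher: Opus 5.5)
Your plan follows the paper's proof. It uses the Carleman variables with $du\,d\omega=2|u_\parallel|^{-2}du_\perp\,du_\parallel$, energy conservation to reduce the Maxwellian factors to $[\mu_M(\cdot)]^{\tilde q-1/2}\sqrt{\mu^\beta_M(v_*)}$, the split of $K_{M,2}$ according to whether $g$ is evaluated at $v'$ or $v_*'$, and the $L^1$ bound by decomposing $u_\parallel$ relative to $v$.

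For $\alpha\neq\beta$ your positive-definiteness argument is cleaner than the paper's $\xi,\eta$ bookkeeping. Along $\omega$ the map $(v_\parallel,u_\parallel)\mapsto(v'_\parallel,v_{*\parallel})$ has determinant $\pm\frac{m^\alpha-m^\beta}{m^\alpha+m^\beta}$, and across $\omega$ it is unimodular. Rotation invariance makes the lower bound uniform in $\omega$, and only $\tilde q>\frac12$ is used.

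\textbf{The gap.} The step that fails is removing the residual $|u_\parallel|^{-1}$, and neither of your options works. On the support of $1-\chi_m$ you only know $|u|\geq m$, so $u_\parallel\to0$ with $|u_\perp|\geq 2m$ is allowed. The cutoff $b^{\alpha\beta}\le C_b|\cos\theta|=C_b|u_\parallel|/|u|$ has already been spent cancelling one of the two powers in $|u_\parallel|^{-2}$.

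In fact no argument can close this. Take $b^{\alpha\beta}=C_b|\cos\theta|$, e.g.\ hard spheres. The kernel produced by this change of variables is how $k^{\alpha\beta(2)}_{M,2}$ is defined. Since $\mu^\alpha_\delta\geq C^{-1}\mu^\alpha_M>0$, this kernel is bounded below by $c\,|u_\parallel|^{-1}$ for fixed $v$, $|u_\perp|\geq 2m$ and $u_\parallel\to0$. So \eqref{sofd237e} is false as written.

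The paper's proof has the same defect. It obtains $C_m|u_\parallel|^{-1}e^{-\bar c_2(|v|^2+|u|^2)}$ in \eqref{final222kkk}, then drops $|u_\parallel|^{-1}$ when it ``immediately'' deduces \eqref{sofd237e}. The correct target is \eqref{sofd237e} with the extra factor $|u_\parallel|^{-1}$. Your argument does prove that, and it suffices downstream: $|u_\parallel|^{-1}$ is integrable on $\mathbb{R}^3$, and it is cancelled by the factor $\frac12|u_\parallel|^2$ when passing to the $(u^*,\omega)$ variables of Remark~\ref{rek26262}.

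\textbf{Smaller corrections.} For the $\alpha=\beta$ hybrid term, the interchange $u_\parallel\leftrightarrow u_\perp$ is not a relabeling. With $u_\perp\in\mathbb{R}^3$ free, the Jacobian is $du\,d\omega=2|u_\parallel|^{-1}|u_\perp|^{-1}du_\perp\,du_\parallel$, as in \eqref{casaeqbedwdu}. Only its product with $b\le C|u_\parallel|/|u|$ reproduces the structure $|u_\perp|^{-1}|u|^{\gamma-1}$ of the typical part.

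In the $L^1$ bound, the gain $\langle v\rangle^{-1}$ comes from the component of $u_\parallel$ \emph{along} $v$. Use $|u_\parallel|^2+|v_\parallel|^2\geq|v\cdot u_\parallel|$ and $\int_{\mathbb{R}}e^{-c|v||s|}ds\lesssim|v|^{-1}$. The two perpendicular directions only contribute $\int_{\mathbb{R}^2}|u^1|^{-1}e^{-c|u^1|^2}du^1=O(1)$.

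Finally, the factor $(1+|v|+|u_\parallel|)^{\gamma-1}$ requires spending part of $e^{-c|v_\parallel|^2}$. The planar $u_\perp$-integral by itself only yields $(1+|v_\perp|+|u_\parallel|)^{\gamma-1}$; this is what the paper's Cases A1--A4 handle.
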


The above lemma provides the decay estimates for the linear $\mathbf{K}$-operator. Although applying it to prove the $L^{\infty}$-estimate is not difficult, it would consume approximately 8–10 pages. For the sake of conciseness in the article, we demonstrate its relationship with a single particle through the following two remarks. This connection allows its integral kernel form to be transformed into the form of the integral kernel satisfied by \cite{[22]Guo2010CPAM}, thus enabling us to apply its proof.

\begin{remark}\label{kernalcompwithsinglek}
	‌The kernel $k_{M,2}^{\alpha \beta(1)}(v,u_{\parallel})$ has similar properties to the Boltzmann equation kernel for a single particle species. Although it contains the more parameters $m^A,m^B$, in fact, through a change of variable:
	\begin{equation}\label{pybh}
		v+\frac{2m^{\beta}}{m^{\alpha}+m^{\beta}}u_{\parallel} \rightarrow v^*,
	\end{equation}
	 yields
	\begin{equation*}
		\int_{\mathbb{R}^3}k_{M,2}^{\alpha \beta(1)}(v,u_{\parallel})\frac{w(v)}{w(v+\frac{2m^{\beta}}{m^{\alpha}+m^{\beta}}u_{\parallel})}g^{\alpha}(v+\frac{2m^{\beta}}{m^{\alpha}+m^{\beta}}u_{\parallel}) d u_{\parallel}=	\int_{\mathbb{R}^3}\tilde{k}_{M,2}^{\alpha \beta(1)}(v,v^*)\frac{w(v)}{w(v^*)}g^{\alpha}(v^*) d v^* ,
	\end{equation*}
	which satisfies
	\begin{eqnarray}
		\begin{split}
			\frac{w(v)}{w(v^*)}|\tilde{k}_{M,2}^{\alpha \beta(1)}(v,v^*)|\leq C_{m} \frac{(1+|v|+|v^*|)^{\gamma-1}}{|v-v^*|} \, e^{-c\{|v-v^*|^2+\frac{||v|^2-|v^*|^2|^2}{|v-v^*|^2}\}}.
		\end{split}
	\end{eqnarray}
	The above estimate is identical to that for the single-particle kernel.  Hence, $K_{M,2,w}^{\alpha,1-\chi,(1)}\mathbf{g}(v)$ is called ``Typical part". While, ``Hybrid part"   consists of two terms exhibiting different decay characteristics. One term has exponentially decaying integral kernels $k_{M,2}^{\alpha \beta(2)}(v,u_{\perp},u_{\parallel})$\, $(\alpha\neq\beta)$. The other term's integral kernel $k_{M,2}^{\alpha \alpha(2)}(v,u_{\perp})$ shares similar properties with the $k_{M,2}^{\alpha \alpha(1)}(v,u_{\parallel})$, since they are almost symetric $(u_{\parallel}\leftrightarrow u_{\perp})$.
\end{remark}

\begin{remark}\label{rek26262}
	‌By \eqref{lem431123512}, the infinitesimal elements of integral kernel are on $u_{\perp}$ and $u_{\parallel}$, and the independent variable in $g^{\beta}(\cdot)$ is $v+u_{\perp}+\frac{m^{\beta}-m^{\alpha}}{m^{\alpha}+m^{\beta}}u_{\parallel}$. This makes it impossible to perform a translation transformation as \eqref{pybh} to satisfy the integral kernel form in \cite{[22]Guo2010CPAM}. However, noting that the integral kernel $k_{M,2}^{\alpha \beta(2)}(v,u_{\perp},u_{\parallel})$ satisfies the estimate \eqref{sofd237e}, it can be indirectly deformed formally through control functions. So, we perform the variable substitution 
	\begin{equation*}
		u_{\perp}+\frac{m^{\beta}-m^{\alpha}}{m^{\alpha}+m^{\beta}}u_{\parallel} \rightarrow u^*.
	\end{equation*}
	 There exists an integral kernel $\hat{k}_{M,2}^{\alpha \beta(2)}(v,u^*)$ such that:
	\begin{eqnarray}\label{ewfrwjkf}
		\begin{split}
			&\big|\prescript{(\alpha \neq \beta)}{}{K}_{M,2,w}^{\alpha,1-\chi,(2)}\mathbf{g}(v)\big|\leq\int_{\mathbb{R}^3}\hat{k}_{M,2}^{\alpha \beta(2)}(v,u^*) g^{\beta}(v+u^*) \,d u^* \qquad \alpha \neq \beta,
		\end{split}
	\end{eqnarray}
	which satisfies
	\begin{eqnarray}
		\begin{split}
			|\hat{k}_{M,2}^{\alpha \beta(2)}(v,u^*)|\leq C e^{-c(|v|^2+|u^*|^2)} .
		\end{split}
	\end{eqnarray}
\end{remark}

The proofs of Remark \ref{kernalcompwithsinglek} and Remark \ref{rek26262} will be given after the proof of Lemma \eqref{LeMK2ker4444}.

\noindent \textbf{Proof of Lemma \eqref{LeMK2ker4444}}:
\begin{proof} We take the  weight function $w(v)=(1+|v|)^l,\,l\geq0$ to give the proof, by \eqref{nuKsuanzidef} and \eqref{ccccKCdef}, we have
	\begin{equation*}
		K_{M,1}^{\alpha,1-\chi} \mathbf{g}=  \frac{\mu^\alpha_{\delta}(v)}{\sqrt{\mu_M^\alpha}} \sum_{\beta=A,B} \int_{\mathbb{R}^3 \times \mathbb{S}^2}
		\{1- \chi_{m}(|v-v_*|)\}B^{\alpha\beta}(|v-v_*|, \cos \theta)   \sqrt{\mu_M^\beta(v_*)}g^\beta(v_*)   d\omega dv_*.
	\end{equation*}
	So we take
	\begin{equation*}
		k_{M,1}^{\alpha \beta}(v,v_*)=
		\{1- \chi_{m}(|v-v_*|)\}B^{\alpha\beta}(|v-v_*|, \cos \theta)  \frac{\mu^\alpha_{\delta}(v)}{\sqrt{\mu_M^\alpha}} \sqrt{\mu_M^\beta(v_*)}.
	\end{equation*}
	We can choose $c$ small enough, such that
	\begin{equation*}
		\frac{\mu^\alpha_{\delta}(v)}{\sqrt{\mu_M^\alpha}} \sqrt{\mu_M^\beta(v_*)} \leq C e^{-2c\{|v|^2+|v_*|^2\}}.
	\end{equation*}
	Since $\kappa_0$ is small enough, and $B^{\alpha\beta}(|v-v_*|, \cos \theta) \leq C|v-v_*|^{\gamma}$, it yields that
	\begin{equation*}
		\frac{w(v)}{w(v_*)}|k_{M,1}^{\alpha \beta}(v,v_*)|\leq C \,e^{-c\{|v|^2+|v_*|^2\}}.
	\end{equation*}
	This complete the proof of $\eqref{a1a4main236}_1$.

	\noindent $\mathbf{Step 1}$.  Analysis of the expression for operator $K_{M,2}^{\alpha,1-\chi}$. 
	
	We perform a variable substitution:
	\begin{align}
		&u=v_*-v,\quad u_{\parallel} = (u \cdot \omega)\omega, \quad
		u_{\perp} = u - (u \cdot \omega)\omega ,
		\label{CE2ESYCGV}
	\end{align}
	where $u \in \mathbb{R}^3$, $\omega \in \mathbb{S}^2$, $u_{\parallel} \in \mathbb{R}^3$, $u_{\perp} \in \mathbb{R}^2$, and
	\begin{equation*}
		|u_{\parallel}|^{2}(|u_{\parallel}|^{-1}d|u_{\parallel}|)d\omega = du_{\parallel} ,\qquad du ={du}^{+}+{du}^{-}=2 (|u_{\parallel}|^{-1} d|u_{\parallel}|) du_{\perp} .
	\end{equation*}
	The expression $(|u_{\parallel}|^{-1}d|u_{\parallel}|)$ can be identified with the radial differential $``dr''$ in spherical coordinates, as its directional variation is confined to the axis defined by $\omega$ or $-\omega$. 
	It yields
	\begin{equation*}
		{du}^{+}=  \mathbf{1}_{\{u\cdot\omega\geq0\}}du=(|u_{\parallel}|^{-1} d|u_{\parallel}|) du_{\perp},\hspace{0.5cm} {du}^{-}= \mathbf{1}_{\{u\cdot\omega<0\}} du=(|u_{\parallel}|^{-1} d|u_{\parallel}|) du_{\perp}.
	\end{equation*}
	So, we get
	\begin{equation}\label{impykba}
		du d\omega = 2|u_{\parallel}|^{-2} du_{\perp} du_{\parallel}.
	\end{equation}
	If $m^{\alpha}>m^{\beta}$, we perform variable substitution
	$-u\rightarrow u$.
	Therefore, we only needs to consider  $m^\beta>m^\alpha$,\,for $\beta \neq \alpha$.
	
	Then $K_{M,2}^{\alpha,1-\chi} \mathbf{g}$ in \eqref{ccccKCdef} can be expressed by
	\begin{align}
		\sum_{\beta=A,B}&\Bigg(\int_{\mathbb{R}^3 \times \mathbb{S}^2} |u|^{\gamma} [\mu_M^{\beta}(u+v)]^{\frac{1}{2}}\{1-\chi_m\}(|u|)[\mu_{M}^{\beta}(v + u_{\perp}+\frac{m^{\beta}-m^{\alpha}}{m^{\alpha}+m^{\beta}}u_{\parallel})]^{\tilde{q}-\frac{1}{2}}\notag\\
		&\hspace{5cm}\times g^\alpha(v + \frac{2m^{\beta}}{m^{\alpha}+m^{\beta}}u_{\parallel})b^{ \alpha \beta}(\theta) \, du \, d\omega \Bigg)\label{CE2easyPT} \\
		+& \int_{\mathbb{R}^3 \times \mathbb{S}^2} |u|^{\gamma} [\mu_{M}^{\beta}(u+v)]^{\frac{1}{2}}\{1-\chi_m\}(|u|)[\mu_{M}^{\alpha}(v + \frac{2m^{\beta}}{m^{\alpha}+m^{\beta}}u_{\parallel})]^{\tilde{q}-\frac{1}{2}}\hspace{1cm}(\alpha \neq \beta)\notag\\
		&\hspace{5.6cm}\times g^\beta(v + u_{\perp}+\frac{m^{\beta}-m^{\alpha}}{m^{\alpha}+m^{\beta}}u_{\parallel})b^{ \alpha \beta}(\theta) \, du \, d\omega \label{CE2hadPT}\\
		+& \int_{\mathbb{R}^3 \times \mathbb{S}^2} |u|^{\gamma} [\mu_{M}^{\alpha}(u+v)]^{\frac{1}{2}}\{1-\chi_m\}(|u|)[\mu_{M}^{\alpha}(v + u_{\parallel})]^{\tilde{q}-\frac{1}{2}} g^\alpha(v + u_{\perp})b^{ \alpha \alpha}(\theta) \, du \, d\omega .\label{CE2hadPTeq}
	\end{align}
	where $\alpha \neq \beta$ in \eqref{CE2hadPT}. Let $\bar{c}=(2\tilde{q}-1)$, $0<\bar{c}<1$, and
	\begin{eqnarray*}
		\begin{split}
			\zeta_{\parallel} + \zeta_{\perp} &= \frac{1+\sqrt{\bar{c}}}{2}v + (\frac{1}{2}+\frac{\sqrt{\bar{c}}(m^{\beta}-m^{\alpha})}{2(m^{\alpha}+m^{\beta})})u_{\parallel}+\frac{1+\sqrt{\bar{c}}}{2}u_{\perp},\\
			\psi_{\parallel} + \psi_{\perp} &=\frac{1-\sqrt{\bar{c}}}{2}v + (\frac{1}{2}-\frac{\sqrt{\bar{c}}(m^{\beta}-m^{\alpha})}{2(m^{\alpha}+m^{\beta})})u_{\parallel} +\frac{1-\sqrt{\bar{c}}}{2}u_{\perp}.
		\end{split}
	\end{eqnarray*}
	where $\zeta_{\parallel} \parallel u_{\parallel},\quad   \zeta_{\perp} \parallel u_{\perp}$. Then, we change variables
	\begin{eqnarray}\label{cgvab4343111x}
		\begin{split}
			\zeta_{\parallel} &= \frac{1+\sqrt{\bar{c}}}{2}v_{\parallel} + (\frac{1}{2}+\frac{\sqrt{\bar{c}}(m^{\beta}-m^{\alpha})}{2(m^{\alpha}+m^{\beta})})u_{\parallel} \\
			\zeta_{\perp} &= \frac{1+\sqrt{\bar{c}}}{2}v_{\perp} +\frac{1+\sqrt{\bar{c}}}{2}u_{\perp},
		\end{split}
	\end{eqnarray}
	and
	\begin{eqnarray}\label{cgvab4343111p}
		\begin{split}
			\psi_{\parallel} &= \frac{1-\sqrt{\bar{c}}}{2}v_{\parallel} + (\frac{1}{2}-\frac{\sqrt{\bar{c}}(m^{\beta}-m^{\alpha})}{2(m^{\alpha}+m^{\beta})})u_{\parallel} \\
			\psi_{\perp} &= \frac{1-\sqrt{\bar{c}}}{2}v_{\perp} +\frac{1-\sqrt{\bar{c}}}{2}u_{\perp},
		\end{split}
	\end{eqnarray}
	where $v=v_{\parallel}+v_{\perp}, u=u_{\parallel}+u_{\perp}$, and $|v|^2=|v_{\parallel}|^2+|v_{\perp}|^2, |u|^2=|u_{\parallel}|^2+|u_{\perp}|^2$.
	
	\noindent 	Therefore, it yields that
	\begin{align*}
		u + v &= [\zeta_{\parallel}+\zeta_{\perp}]+[\psi_{\parallel}+\psi_{\perp}], \label{CEPT2ACA1vb}\\
		\sqrt{\bar{c}}(v + u_{\perp}&+\frac{m^{\beta}-m^{\alpha}}{m^{\alpha}+m^{\beta}}u_{\parallel}) = [\zeta_{\parallel}+\zeta_{\perp}]-[\psi_{\parallel}+\psi_{\perp}].\notag
	\end{align*}
	Obviously, the exponent in \eqref{CE2easyPT} is $-\frac{m^{\beta}}{4\theta_M}\Big[| \zeta_{\perp}|^2+|\zeta_{\parallel}|^2+|\psi_{\parallel}|^2+|\psi_{\perp}|^2\Big]$.	\\
	Notice that
	\begin{equation*}
		|u_{\parallel}| = |u \cos \theta| = \sqrt{|u_{\parallel}|^2 + |u_{\perp}|^2} \, |\cos \theta|.
	\end{equation*}
	Hence, \eqref{CE2easyPT} can be written as
	\begin{align}
		& \int_{\mathbb{R}^3} \frac{1}{|u_{\parallel}|^2} e^{-\frac{m^\beta}{4\theta_M}\big[|\zeta_{\parallel}|^2+|\psi_{\parallel}|^2\big]} g^\alpha(v + \frac{2m^{\beta}}{m^{\alpha}+m^{\beta}}u_{\parallel}) \notag\\
		&\quad \times \int_{\mathbb{R}^2} e^{-\frac{m^\beta}{4\theta_M}\big[| \zeta_{\perp}|^2+|\psi_{\perp}|^2\big]} \left[|u_{\parallel}|^2 + |u_{\perp}|^2\right]^{\frac{\gamma}{2}} \{1-\chi_m\}\big(\sqrt{|u_{\parallel}|^2 + |u_{\perp}|^2}\big) b^{ \alpha \beta}(\theta) \, du_{\perp} du_{\parallel} \notag\\
		=& \int_{\mathbb{R}^3} \frac{1}{|u_{\parallel}|} e^{-\frac{m^\beta}{4\theta_M}\big[|\zeta_{\parallel}|^2+|\psi_{\parallel}|^2\big]} g^\alpha(v + \frac{2m^{\beta}}{m^{\alpha}+m^{\beta}}u_{\parallel})\notag \\
		&\quad \times \int_{\mathbb{R}^2} e^{-\frac{m^\beta}{4\theta_M}\big[| \zeta_{\perp}|^2+|\psi_{\perp}|^2\big]} \left[|u_{\parallel}|^2 + |u_{\perp}|^2\right]^{\frac{\gamma - 1}{2}} \{1-\chi_m\}\big(\sqrt{|u_{\parallel}|^2 + |u_{\perp}|^2}\big) \frac{b^{ \alpha \beta}(\theta)}{|\cos \theta|} \, du_{\perp} du_{\parallel}.\label{CE2BDS249esD1}
	\end{align}
		We introduce the integral kernel $k_{M,2}^{\alpha \beta(1)}(v,u_{\parallel})$:
	\begin{align}
		k_{M,2}^{\alpha \beta(1)}(v,u_{\parallel}) &=:  \frac{1}{|u_{\parallel}|} e^{-\frac{m^\beta}{4\theta_M}\big[|\zeta_{\parallel}|^2+|\psi_{\parallel}|^2\big]} \int_{\mathbb{R}^2} e^{-\frac{m^\beta}{4\theta_M}[| \zeta_{\perp}|^2+|\psi_{\perp}|^2]}\notag\\
		&\hspace{2.8cm}\times  \left[|u_{\parallel}|^2 + |u_{\perp}|^2\right]^{\frac{\gamma - 1}{2}} \chi\big(\sqrt{|u_{\parallel}|^2 + |u_{\perp}|^2}\big) \frac{b^{ \alpha \beta}(\theta)}{|\cos \theta|} \, du_{\perp}.\label{ker1rrr}
	\end{align}
	Then \eqref{CE2easyPT} can be further expressed as
	\begin{equation*}
		\sum_{\beta=A,B}	\int_{\mathbb{R}^3}k_{M,2}^{\alpha \beta(1)}(v,u_{\parallel})\,g^\alpha(v + \frac{2m^{\beta}}{m^{\alpha}+m^{\beta}}u_{\parallel}) \,d u_{\parallel},
	\end{equation*}
	and \eqref{lemerew4311234} is expressed as
	\begin{eqnarray*}
		\begin{split}
	\sum_{\beta=A,B} \int_{\mathbb{R}^3}\frac{w(v)}{w(v+\frac{2m^{\beta}}{m^{\alpha}+m^{\beta}}u_{\parallel})}k_{M,2}^{\alpha \beta(1)}(v,u_{\parallel})g^{\alpha}(v+\frac{2m^{\beta}}{m^{\alpha}+m^{\beta}}u_{\parallel}) d u_{\parallel}.
		\end{split}
	\end{eqnarray*}
	We denote $\hat{c}=\frac{m^\beta}{4\theta_M}[(\frac{1+\sqrt{\bar{c}}}{2})^2+(\frac{1-\sqrt{\bar{c}}}{2})^2]$,
	since $(\frac{1+\sqrt{\bar{c}}}{2}):(\frac{1+\sqrt{\bar{c}}}{2})=(\frac{1-\sqrt{\bar{c}}}{2}):(\frac{1-\sqrt{\bar{c}}}{2})$, we can write the exponents as
	\begin{equation*}
		e^{-\frac{m^\beta}{4\theta_M}[| \zeta_{\perp}|^2+|\psi_{\perp}|^2]}=e^{-\hat{c}|v_{\perp}+u_{\perp}|^2}.
	\end{equation*}
	
	\noindent By a direct calculation, it shows that
	\begin{equation}\label{0CET2PARALCOMPAR}
		(\frac{1+\sqrt{\bar{c}}}{2}):(\frac{1}{2}+\frac{\sqrt{\bar{c}}(m^{\beta}-m^{\alpha})}{2(m^{\alpha}+m^{\beta})})>(\frac{1-\sqrt{\bar{c}}}{2}):(\frac{1}{2}-\frac{\sqrt{\bar{c}}(m^{\beta}-m^{\alpha})}{2(m^{\alpha}+m^{\beta})}).
	\end{equation}
	This indicates that if we regard $v_{\parallel}$ and $u_{\parallel}$ as a set of basis, the two vectors $\zeta_{\parallel}$ and $	\psi_{\parallel}$ are linearly independent. But,  vectors $\zeta_{\perp}$ and $\psi_{\perp}$ are not linearly independent which means there is no decay about $u_{\perp}$ and $v_{\perp}$.
	
	By comparing the expressions for $\zeta_{\parallel}$ and $\psi_{\parallel}$ in \eqref{cgvab4343111x} and \eqref{cgvab4343111p}, one can see that there exist constants $0<c_{v_\parallel},c_{u_\parallel}<1$ such that
	\begin{equation}\label{cccu+u-}
		\begin{split}
			c_{v_\parallel}(\frac{1+\sqrt{\bar{c}}}{2})=&(\frac{1-\sqrt{\bar{c}}}{2}),\\
			c_{u_\parallel}(\frac{1}{2}+\frac{\sqrt{\bar{c}}(m^{\beta}-m^{\alpha})}{2(m^{\alpha}+m^{\beta})})=&(\frac{1}{2}-\frac{\sqrt{\bar{c}}(m^{\beta}-m^{\alpha})}{2(m^{\alpha}+m^{\beta})}).
		\end{split}
	\end{equation}
	From \eqref{cccu+u-}, it can be obtained that
	\begin{equation*}
		\begin{split}
			c_{u,-}:=&c_{v_\parallel}(\frac{1}{2}+\frac{\sqrt{\bar{c}}(m^{\beta}-m^{\alpha})}{2(m^{\alpha}+m^{\beta})})-(\frac{1}{2}-\frac{\sqrt{\bar{c}}(m^{\beta}-m^{\alpha})}{2(m^{\alpha}+m^{\beta})})<0,\\
			c_{v,-}:=&(\frac{1-\sqrt{\bar{c}}}{2})-c_{u_\parallel}(\frac{1+\sqrt{\bar{c}}}{2})<0.
		\end{split}
	\end{equation*}
	We denote
	\begin{equation*}
		\begin{split}
			\mathbf{c}_{u,+}=\frac{1}{\sqrt{2}}(c_{v_\parallel}\zeta_{\parallel}+\psi_{\parallel}), \qquad \mathbf{c}_{u,-}=\frac{1}{\sqrt{2}}(c_{v_\parallel}\zeta_{\parallel}-\psi_{\parallel})=\frac{c_{u,-}}{\sqrt{2}}u_{\parallel},\\
			\mathbf{c}_{v,+}=\frac{1}{\sqrt{2}}(c_{u_\parallel}\zeta_{\parallel}+\psi_{\parallel}), \qquad \mathbf{c}_{v,-}=\frac{1}{\sqrt{2}}(\psi_{\parallel}-c_{u_\parallel}\zeta_{\parallel})=\frac{c_{v,-}}{\sqrt{2}}v_{\parallel},
		\end{split}
	\end{equation*}
	which gives the equality that
	\begin{equation*}
		\begin{split}
			(c_{v_\parallel})^2|\zeta_{\parallel}|^2+|\psi_{\parallel}|^2= |\mathbf{c}_{u,+}|^2+|\mathbf{c}_{u,-}|^2=|\mathbf{c}_{u,+}|^2+\frac{(c_{u,-})^2}{2}|u_{\parallel}|^2,\\
			(c_{u_\parallel})^2|\zeta_{\parallel}|^2+|\psi_{\parallel}|^2= |\mathbf{c}_{v,+}|^2+|\mathbf{c}_{v,-}|^2=|\mathbf{c}_{v,+}|^2+\frac{(c_{v,-})^2}{2}|v_{\parallel}|^2.
		\end{split}
	\end{equation*}
	The exponential term about $|\zeta_{\parallel}|,|\zeta_{\parallel}|,|\psi_{\perp}|,|\psi_{\perp}|$, in \eqref{CE2BDS249esD1} can be bounded as follows:
	\begin{equation*}
		e^{-\frac{1}{4}\Big[|\psi_{\parallel}|^2+|\zeta_{\parallel}|^2+|\psi_{\perp}|^2+|\zeta_{\perp}|^2\Big]} \leq e^{-\bar{c}\Big[|v_{\parallel}|^2+|u_{\parallel}|^2+|u_{\perp}+v_{\perp}|^2\Big]}.
	\end{equation*}
	Therefore, \eqref{CE2easyPT} can be finally bounded by
	\begin{align}
		& \int_{\mathbb{R}^3} \frac{1}{|u_{\parallel}|} e^{-\bar{c}_1\big[|v_{\parallel}|^2+|u_{\parallel}|^2\big]} g^\alpha(v + \frac{2m^{\beta}}{m^{\alpha}+m^{\beta}}u_{\parallel})\notag \\
		&\quad \times \int_{\mathbb{R}^2} e^{-\hat{c}| u_{\perp}+v_{\perp}|^2} \left[|u_{\parallel}|^2 + |u_{\perp}|^2\right]^{\frac{\gamma - 1}{2}} \{1-\chi_m\}\big(\sqrt{|u_{\parallel}|^2 + |u_{\perp}|^2}\big) \frac{b^{ \alpha \beta}(\theta)}{|\cos \theta|} \, du_{\perp} du_{\parallel},\label{CE2BDSesD1}
	\end{align}
	where
	\begin{equation*}
		\bar{c}_1=\frac{m^{\beta}}{64\theta_M}\min\{(c_{u,-})^2,(c_{v,-})^2,(\tilde{c}_{u,-})^2,(\tilde{c}_{v,-})^2\}>0,\quad \quad \hat{c}=\frac{m^{\beta}(1+\bar{c})}{8\theta_M}>0.
	\end{equation*}
	Therefore, $k_{M,2}^{\alpha \beta(1)}(v,u_{\parallel})$ is bounded by
	\begin{align}
		\big|k_{M,2}^{\alpha \beta(1)}(v,u_{\parallel})\big|&\leq\frac{C}{|u_{\parallel}|} e^{-\bar{c}_1\big[|v_{\parallel}|^2+|u_{\parallel}|^2\big]}
		\int_{\mathbb{R}^2} e^{-\hat{c}| u_{\perp}+v_{\perp}|^2} \left[|u_{\parallel}|^2 + |u_{\perp}|^2\right]^{\frac{\gamma - 1}{2}} \notag\\
		&\hspace{3cm}\times \{1-\chi_m\}\big(\sqrt{|u_{\parallel}|^2 + |u_{\perp}|^2}\big) \frac{b^{ \alpha \beta}(\theta)}{|\cos \theta|} \, du_{\perp}. \label{bdskm21}
	\end{align}
	
	Next, we consider integral for $k_{M,2}^{\alpha \beta(2)}(v,u_{\perp},u_{\parallel})$,$(\alpha \neq \beta)$. Note that in \eqref{CE2hadPT}, the exponents are different. To clearly characterize their exponential decay, we let
	\begin{eqnarray*}
		\begin{split}
			\xi_{\parallel} + \xi_{\perp} &= \frac{\sqrt{\bar{c}m^\alpha}+\sqrt{m^\beta}}{2}v + (\frac{\sqrt{m^\beta}}{2}+\frac{\sqrt{\bar{c}m^{\alpha}}m^{\beta}}{m^{\alpha}+m^{\beta}})u_{\parallel}+\frac{\sqrt{m^\beta}}{2}u_{\perp}, \quad
			\xi_{\parallel} \parallel u_{\parallel},\quad   \xi_{\perp} \parallel u_{\perp},\\
			\eta_{\parallel}+\eta_{\perp}&=\frac{\sqrt{m^\beta}-\sqrt{\bar{c}m^\alpha}}{2}v+(\frac{\sqrt{m^\beta}}{2}-\frac{\sqrt{\bar{c}m^{\alpha}}m^{\beta}}{m^{\alpha}+m^{\beta}})u_{\parallel}+\frac{\sqrt{m^\beta}}{2}u_{\perp}, \quad
			\eta_{\parallel} \parallel u_{\parallel},\quad   \eta_{\perp} \parallel u_{\perp}.
		\end{split}
	\end{eqnarray*}
	It follows that
	\begin{align*}
		\sqrt{\bar{c}m^\alpha}(v + \frac{2m^{\beta}}{m^{\alpha}+m^{\beta}}u_{\parallel})&= \xi_{\parallel} + \xi_{\perp}-\big[\eta_{\parallel}+\eta_{\perp}\big], \\
		\sqrt{m^\beta}(v +u_{\parallel}+ u_{\perp}) &= \xi_{\parallel} + \xi_{\perp}+\big[\eta_{\parallel}+\eta_{\perp}\big].
	\end{align*}
	To keep the notation concise, we make the following variable substitution
	\begin{eqnarray}\label{etetetcb}
		\begin{split}
			\eta_{\parallel} &= \frac{\sqrt{m^\beta}-\sqrt{\bar{c}m^\alpha}}{2}v_{\parallel} + (\frac{\sqrt{m^\beta}}{2}-\frac{\sqrt{\bar{c}m^{\alpha}}m^{\beta}}{m^{\alpha}+m^{\beta}})u_{\parallel}, \\
			\eta_{\perp} &= \frac{\sqrt{m^\beta}-\sqrt{\bar{c}m^\alpha}}{2}v_{\perp} +\frac{\sqrt{m^\beta}}{2}u_{\perp},
		\end{split}
	\end{eqnarray}
	and
	\begin{eqnarray}\label{xixixicb}
		\begin{split}
			\xi_{\parallel} &= \frac{\sqrt{m^\beta}+\sqrt{\bar{c}m^\alpha}}{2}v_{\parallel} + (\frac{\sqrt{m^\beta}}{2}+\frac{\sqrt{\bar{c}m^{\alpha}}m^{\beta}}{m^{\alpha}+m^{\beta}})u_{\parallel}, \\
			\xi_{\perp} &= \frac{\sqrt{m^\beta}+\sqrt{\bar{c}m^\alpha}}{2}v_{\perp} +\frac{\sqrt{m^\beta}}{2}u_{\perp},
		\end{split}
	\end{eqnarray}
	where $v=v_{\parallel}+v_{\perp}, v_{\parallel} \parallel u_{\parallel}, v_{\perp}\parallel u_{\perp}$.
	Then, we deduce that the exponents in \eqref{CE2hadPT} are
	\begin{equation*}
		-\frac{1}{4\theta_M}\Big[|\xi_{\parallel}|^2+|\xi_{\perp}|^2+|\eta_{\parallel}|^2+|\eta_{\perp}|^2\Big] .
	\end{equation*}
	Therefore, \eqref{CE2hadPT} can be written as
	\begin{align}
		& \int_{\mathbb{R}^3} \frac{1}{|u_{\parallel}|^2} e^{-\frac{1}{4\theta_M}\big[|\xi_{\parallel}|^2+|\eta_{\parallel}|^2\big]}\int_{\mathbb{R}^2}b^{ \alpha \beta}(\theta)\chi_m\big(\sqrt{|u_{\parallel}|^2 + |u_{\perp}|^2}\big)  \notag\\
		&\quad \times  e^{-\frac{1}{4\theta_M}\big[|\xi_{\perp}|^2+|\eta_{\perp}|^2\big]} \left[|u_{\parallel}|^2 + |u_{\perp}|^2\right]^{\frac{\gamma}{2}}   g^\beta(v + u_{\perp}+\frac{m^{\beta}-m^{\alpha}}{m^{\alpha}+m^{\beta}}u_{\parallel})\, du_{\perp} du_{\parallel} \notag\\
		=& \int_{\mathbb{R}^3} \frac{1}{|u_{\parallel}|} e^{-\frac{1}{4\theta_M}\big[|\xi_{\parallel}|^2+|\eta_{\parallel}|^2\big]}\int_{\mathbb{R}^2} \frac{b^{ \alpha \beta}(\theta)}{|\cos \theta|}\chi_m\big(\sqrt{|u_{\parallel}|^2 + |u_{\perp}|^2}\big)  \notag\\
		&\quad \times  e^{-\frac{1}{4\theta_M}\big[|\xi_{\perp}|^2+|\eta_{\perp}|^2\big] } \left[|u_{\parallel}|^2 + |u_{\perp}|^2\right]^{\frac{\gamma - 1}{2}}g^\beta(v + u_{\perp}+\frac{m^{\beta}-m^{\alpha}}{m^{\alpha}+m^{\beta}}u_{\parallel})  \, du_{\perp} du_{\parallel}.\label{CE2BDSHHD2}
	\end{align}
	We define the integral kernel $k_{M,2}^{\alpha \beta(2)}(v,u_{\perp},u_{\parallel})$,\,$(\alpha \neq \beta)$:
	\begin{align}
		k_{M,2}^{\alpha \beta(2)}(v,u_{\perp},u_{\parallel})=:&  \frac{1}{|u_{\parallel}|} e^{-\frac{1}{4\theta_M}\big[|\eta_{\parallel}|^2+|\xi_{\parallel}|^2+|\eta_{\perp}|^2+|\xi_{\perp}|^2\big]} \chi_m\big(\sqrt{|u_{\parallel}|^2 + |u_{\perp}|^2}\big)\notag\\
		&\hspace{4cm}\times \left[|u_{\parallel}|^2 + |u_{\perp}|^2\right]^{\frac{\gamma - 1}{2}} \frac{b^{ \alpha \beta}(\theta)}{|\cos \theta|}.\label{}
	\end{align}
	Then \eqref{CE2hadPT} is expressed as
	\begin{equation*}
		\int_{\mathbb{R}^3\times\mathbb{R}^2}k_{M,2}^{\alpha \beta(2)}(v,u_{\perp},u_{\parallel})\,g^\beta(v + u_{\perp}+\frac{m^{\beta}-m^{\alpha}}{m^{\alpha}+m^{\beta}}u_{\parallel}) \,d u_{\perp}\,d u_{\parallel},
	\end{equation*}
	and $\eqref{lem431123512}_1$ is expressed as
	\begin{eqnarray}
		\begin{split}
			 \int_{\mathbb{R}^3\times\mathbb{R}^2}\frac{w(v)}{w(v + u_{\perp}+\frac{m^{\beta}-m^{\alpha}}{m^{\alpha}+m^{\beta}}u_{\parallel})}k_{M,2}^{\alpha \beta(2)}(v,u_{\perp},u_{\parallel})g^\beta(v + u_{\perp}+\frac{m^{\beta}-m^{\alpha}}{m^{\alpha}+m^{\beta}}u_{\parallel})\,d u_{\perp}\, d u_{\parallel}.
		\end{split}
	\end{eqnarray}
	
	By direct calculation, it shows that  $(\frac{\sqrt{m^\beta}}{2}-\frac{\sqrt{\bar{c}m^{\alpha}}m^{\beta}}{m^{\alpha}+m^{\beta}})>0$, and
	\begin{equation}\label{CET2PARALCOMPAR}
		(\frac{\sqrt{m^\beta}-\sqrt{\bar{c}m^\alpha}}{2}):(\frac{\sqrt{m^\beta}}{2}-\frac{\sqrt{\bar{c}m^{\alpha}}m^{\beta}}{m^{\alpha}+m^{\beta}})>(\frac{\sqrt{m^\beta}+\sqrt{\bar{c}m^\alpha}}{2}):(\frac{\sqrt{m^\beta}}{2}+\frac{\sqrt{\bar{c}m^{\alpha}}m^{\beta}}{m^{\alpha}+m^{\beta}}).
	\end{equation}
	This indicates that if we regard $u_{\parallel}$ and $v_{\parallel}$ as a set of basis, the two vectors $\xi_{\parallel}$ and $\eta_{\parallel}$ are linearly independent. Similarly, we can infer that under the basis $u_{\perp}$ and $v_{\perp}$, vectors $\xi_{\perp}$ and $\eta_{\perp}$ are also linearly independent.
	
	By comparing the expressions for $\xi_{\parallel}$ and $\eta_{\parallel}$ in \eqref{etetetcb} and \eqref{xixixicb}, one can see that there exist constants $0<d_{v_\parallel},d_{u_\parallel}<1$ such that
	\begin{equation}
		\begin{split}
			d_{v_\parallel}(\frac{\sqrt{m^\beta}+\sqrt{\bar{c}m^\alpha}}{2})=&\frac{\sqrt{m^\beta}-\sqrt{\bar{c}m^\alpha}}{2},\\
			d_{u_\parallel}(\frac{\sqrt{m^\beta}}{2}+\frac{\sqrt{\bar{c}m^{\alpha}}m^{\beta}}{m^{\alpha}+m^{\beta}})=&\frac{\sqrt{m^\beta}}{2}-\frac{\sqrt{\bar{c}m^{\alpha}}m^{\beta}}{m^{\alpha}+m^{\beta}}.
		\end{split}
	\end{equation}
	From \eqref{CET2PARALCOMPAR}, it can be obtained that
	\begin{equation*}
		\begin{split}
			d_{u,-}:=&d_{v_\parallel}(\frac{\sqrt{m^\beta}}{2}+\frac{\sqrt{\bar{c}m^{\alpha}}m^{\beta}}{m^{\alpha}+m^{\beta}})-(\frac{\sqrt{m^\beta}}{2}-\frac{\sqrt{\bar{c}m^{\alpha}}m^{\beta}}{m^{\alpha}+m^{\beta}})>0,\\
			d_{v,-}:=&(\frac{\sqrt{m^\beta}-\sqrt{\bar{c}m^\alpha}}{2})-d_{u_\parallel}(\frac{\sqrt{m^\beta}+\sqrt{\bar{c}m^\alpha}}{2})>0.
		\end{split}
	\end{equation*}
	We denote
	\begin{equation*}
		\begin{split}
			\mathbf{d}_{u,+}=\frac{1}{\sqrt{2}}(d_{v_\parallel}\xi_{\parallel}+\eta_{\parallel}), \qquad \mathbf{d}_{u,-}=\frac{1}{\sqrt{2}}(d_{v_\parallel}\xi_{\parallel}-\eta_{\parallel})=\frac{d_{u,-}}{\sqrt{2}}u_{\parallel},\\
			\mathbf{d}_{v,+}=\frac{1}{\sqrt{2}}(d_{u_\parallel}\xi_{\parallel}+\eta_{\parallel}), \qquad \mathbf{d}_{v,-}=\frac{1}{\sqrt{2}}(\eta_{\parallel}-d_{u_\parallel}\xi_{\parallel})=\frac{d_{v,-}}{\sqrt{2}}v_{\parallel},
		\end{split}
	\end{equation*}
	which gives the equality that
	\begin{equation*}
		\begin{split}
			(d_{v_\parallel})^2|\xi_{\parallel}|^2+|\eta_{\parallel}|^2= |\mathbf{d}_{u,+}|^2+|\mathbf{d}_{u,-}|^2=|\mathbf{d}_{u,+}|^2+\frac{(d_{u,-})^2}{2}|u_{\parallel}|^2,\\
			(d_{u_\parallel})^2|\xi_{\parallel}|^2+|\eta_{\parallel}|^2= |\mathbf{d}_{v,+}|^2+|\mathbf{d}_{v,-}|^2=|\mathbf{d}_{v,+}|^2+\frac{(d_{v,-})^2}{2}|v_{\parallel}|^2.
		\end{split}
	\end{equation*}
	Similarly, it can be observed that
	\begin{equation*}
		(\frac{\sqrt{m^\beta}-\sqrt{\bar{c}m^\alpha}}{2}) :\frac{\sqrt{m^\beta}}{2}< (\frac{\sqrt{m^\beta}+\sqrt{\bar{c}m^\alpha}}{2}):\frac{\sqrt{m^\beta}}{2}.
	\end{equation*}
	Through a process analogous to the above that there exist constants
	$0<d_{v_{\perp}}<1$ and $\tilde{d}_{u,-}, \tilde{d}_{v,-}>0$, as well as vectors $\tilde{\mathbf{d}}_{u,+},\tilde{\mathbf{d}}_{v,+}$, such that
	\begin{equation*}
		\begin{split}
			(d_{v_\perp})^2|\xi_{\perp}|^2+|\eta_{\perp}|^2&= |\mathbf{d}_{u,+}|^2+|\mathbf{d}_{u,-}|^2=|\mathbf{d}_{u,+}|^2+\frac{(d_{u,-})^2}{2}|u_{\perp}|^2,\\
			|\xi_{\perp}|^2+|\eta_{\perp}|^2&= |\mathbf{d}_{v,+}|^2+|\mathbf{d}_{v,-}|^2=|\mathbf{d}_{v,+}|^2+\frac{(d_{v,-})^2}{2}|v_{\perp}|^2.
		\end{split}
	\end{equation*}
	The exponential term about $\xi_{\parallel},\xi_{\perp},\eta_{\parallel},\eta_{\perp}$, in \eqref{CE2BDSHHD2} can be bounded as follows:
	\begin{equation*}
		e^{-\frac{1}{4}\Big[|\xi_{\parallel}|^2+|\xi_{\perp}|^2+|\eta_{\parallel}|^2+|\eta_{\perp}|^2\Big]} < e^{-\bar{c}_2\Big[|v_{\parallel}|^2+|v_{\perp}|^2+|u_{\parallel}|^2+|u_{\perp}|^2\Big]},
	\end{equation*}
	where
	\begin{equation*}
		\bar{c}_2=\frac{1}{64\theta_M}\min\{(d_{u,-})^2,(d_{v,-})^2,(\tilde{d}_{u,-})^2,(\tilde{d}_{v,-})^2\}>0.
	\end{equation*}
	Therefore, \eqref{CE2hadPT} is bounded by
	\begin{align*}
		& \int_{\mathbb{R}^3} \frac{1}{|u_{\parallel}|} e^{-\bar{c}_2\big[|v_{\parallel}|^2+|u_{\parallel}|^2\big]}\int_{\mathbb{R}^2} \frac{b^{ \alpha \beta}(\theta)}{|\cos \theta|}\chi_m\big(\sqrt{|u_{\parallel}|^2 + |u_{\perp}|^2}\big)  \notag\\
		&\quad \times  e^{-\bar{c}_2\big[|v_{\perp}|^2+|u_{\perp}|^2\big] } \left[|u_{\parallel}|^2 + |u_{\perp}|^2\right]^{\frac{\gamma - 1}{2}}f^\beta(v + u_{\perp}+\frac{m^{\beta}-m^{\alpha}}{m^{\alpha}+m^{\beta}}u_{\parallel})  \, du_{\perp} du_{\parallel}.
	\end{align*}
	The second  integral kernel $k_{M,2}^{\alpha \beta(2)}(v,u_{\perp},u_{\parallel})$ is bounded by
	\begin{align}
		\big|k_{M,2}^{\alpha \beta(2)}(v,u_{\perp},u_{\parallel})\big|&\leq \frac{C}{|u_{\parallel}|} e^{-\bar{c}_2\big[|v|^2+|u_{\parallel}|^2+|u_{\perp}|^2\big]} \left[|u_{\parallel}|^2 + |u_{\perp}|^2\right]^{\frac{\gamma - 1}{2}}\notag  \\
		&\hspace{3.5cm} \times  \frac{b^{ \alpha \beta}(\theta)}{|\cos \theta|}\{1-\chi_m\}\big(\sqrt{|u_{\parallel}|^2 + |u_{\perp}|^2}\big)\notag\\
		&\leq \frac{C_m}{|u_{\parallel}|} e^{-\bar{c}_2\big[|v|^2+|u|^2\big]} . \label{final222kkk} 
	\end{align}
	
		\noindent\textbf{The degenerate part*}.\hspace{0.1cm}	Finally, we turn to the degenerate $(\alpha=\beta)$ term  in the ``Hybrid part''.
	
  The integration kernel‌ $k_{M,2}^{\alpha \alpha(2)}$ no longer adheres to exponential decay \eqref{final222kkk},
	but instead satisfies the decay  analogous to $k_{M,2}^{\alpha \beta(1)}$. 
	\begin{proof}
		Notice by \eqref{etetetcb} and \eqref{xixixicb}, when $\alpha=\beta$, we have
		\begin{eqnarray*}
			\begin{split}
				\xi_{\parallel} &= \sqrt{m^\alpha}\frac{(1+\sqrt{\bar{c}})}{2}(v_{\parallel}+u_{\parallel}), \hspace{1.1cm}  \eta_{\parallel}=\sqrt{m^\alpha}\frac{(1-\sqrt{\bar{c}})}{2}(v_{\parallel}+u_{\parallel})\\
				\xi_{\perp} &= \sqrt{m^\alpha}\frac{(1+\sqrt{\bar{c}})}{2}v_{\perp} +\frac{\sqrt{m^{\alpha}}}{2}u_{\perp},\hspace{0.5cm}\eta_{\perp} = \sqrt{m^\alpha}\frac{(1-\sqrt{\bar{c}})}{2}v_{\perp} +\frac{\sqrt{m^{\alpha}}}{2}u_{\perp}
			\end{split}
		\end{eqnarray*}
		 Notice that $\xi_{\parallel}$ and $\eta_{\parallel}$ are directly proportional. Therefore, they are not linearly independent. There is not exponential decay for $v_{\parallel}$ and $u_{\parallel}$ individually. But only  exhibits exponential decay with respect to $(v_{\parallel}+u_{\parallel})$. This results in $k_{M,2}^{\alpha \alpha(2)}$ no longer having exponential decay with respect to $v$. 
		
		Comparing \eqref{CE2hadPTeq} and \eqref{CE2easyPT}, ‌we note that the independent variables  are $v+\frac{2m^{\beta}}{m^{\alpha}+m^{\beta}}u_{\parallel}$ and $v+u_{\perp}$ in these two cases, respectively. Since ``$du\,d\omega$'' has five degrees of freedom, when the independent variable involves $\frac{2m^{\beta}}{m^{\alpha}+m^{\beta}}u_{\parallel}$, we can assign $u_{\parallel}$ to have three degrees of freedom, while $u_{\perp}$ retains two degrees of freedom. When the independent variable involves $u_{\perp}$, we must assign $u_{\perp}$ to have three degrees of freedom while $u_{\parallel}$ retains two degrees of freedom.
		
		Recall that in \eqref{CE2easyPT}, we assign $u_{\parallel}$ to have three degrees of freedom,  $u \in \mathbb{R}^3$, $\omega \in \mathbb{S}^2$, $u_{\parallel} \in \mathbb{R}^3$, $u_{\perp} \in \mathbb{R}^2$, and
		\begin{equation*}
			|u_{\parallel}|^{2}(|u_{\parallel}|^{-1}d|u_{\parallel}|)d\omega = du_{\parallel} ,\qquad du = 2(|u_{\parallel}|^{-1} d|u_{\parallel}|) du_{\perp}. 
		\end{equation*}
		But in \eqref{CE2hadPTeq}, we assign $u_{\perp}$ to have three degrees of freedom,  $u \in \mathbb{R}^3$, $\omega' \in \mathbb{S}^2$, $u_{\perp} \in \mathbb{R}^3$, $u_{\parallel} \in \mathbb{R}^2$, and
		\begin{equation}\label{cgvbuprepdd}
			|u_{\perp}|^{2}(|u_{\perp}|^{-1}d|u_{\perp}|)d\omega' = du_{\perp} ,\qquad du =2 (|u_{\perp}|^{-1} d|u_\perp|) du_{\parallel}. 
		\end{equation}
		Let the three coordinates of the spherical coordinate transformation be $(\tilde{r},\tilde{\theta},\tilde{\phi})$, where $\tilde{r}\geq0$ is radial distance, $0\leq\tilde{\theta}\leq \pi$ is polar angle and $0\leq\tilde{\phi}\leq 2 \pi$ is azimuthal angle.  
		
		In \eqref{CE2easyPT}, we take $(|u_{\parallel}|,\theta,\tilde{\phi})\rightarrow(\tilde{r},\tilde{\theta},\tilde{\phi})$. 
		
		The differential area element on the unit sphere is $d\omega=\sin\theta d\theta d\tilde{\phi}$.
		
		In \eqref{CE2hadPTeq}, we take $(|u_{\perp}|,\varphi,\tilde{\phi})\rightarrow(\tilde{r},\tilde{\theta},\tilde{\phi})$. 
		
		The differential area element on the unit sphere is $d\omega'=\sin\varphi d\varphi d\tilde{\phi}$.

		\begin{figure}
			\centering
			\includegraphics[width=9cm,height=5.5cm]{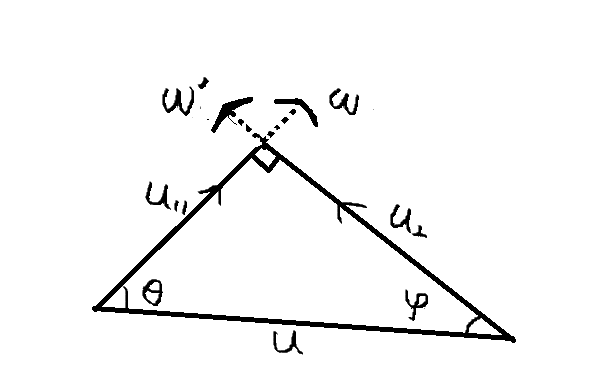}
			\caption{The illustration of variable substitution }
		\end{figure}
		From the picture (Figure 2), we have $|u_{\parallel}|=|u\cos\theta|$, $|u_{\perp}|=|u\cos\varphi|$, and $\varphi+\theta=\frac{\pi}{2}$. Therefore, it yields 
		\begin{equation*}
			du_{\perp}=|u_{\perp}|^{2}(|u_{\perp}|^{-1}d|u_{\perp}|)d\omega' = |u_{\perp}||u_{\parallel}|(|u_{\perp}|^{-1}d|u_{\perp}|)d\omega
		\end{equation*}
		Together with \eqref{cgvbuprepdd}, for , we have
		\begin{equation}\label{casaeqbedwdu}
			du d\omega = 2|u_{\parallel}|^{-1}|u_{\perp}|^{-1} du_{\perp} du_{\parallel}
		\end{equation}
	
	Through a process analogous to \eqref{0CET2PARALCOMPAR}-\eqref{bdskm21}, we show that \eqref{CE2hadPTeq}
	can be expressed as
		\begin{equation*}
			\sum_{\beta=A,B}	\int_{\mathbb{R}^3}k_{M,2}^{\alpha \alpha(2)}(v,u_{\perp})\,g^\alpha(v + u_{\perp}) \,d u_{\perp},
		\end{equation*}
		 and
		 the integral kernel is 
		\begin{align}
			k_{M,2}^{\alpha \alpha(2)}(v,u_{\perp})	&=: \frac{1}{|u_{\perp}|} e^{-\frac{1}{4\theta_M}[|\xi_{\perp}|^2+|\eta_{\perp}|^2] } \int_{\mathbb{R}^2} e^{-\frac{1}{4\theta_M}[|\xi_{\parallel}|^2+|\eta_{\parallel}|^2]}\notag\\
			&\hspace{2.5cm}\times  \left[|u_{\parallel}|^2 + |u_{\perp}|^2\right]^{\frac{\gamma - 1}{2}} \chi_m\big(\sqrt{|u_{\parallel}|^2 + |u_{\perp}|^2}\big) \frac{b^{ \alpha \beta}(\theta)}{|\cos \theta|} \, du_{\parallel}. \label{socd264}
		\end{align}
		where $u_{\perp} \in \mathbb{R}^3, u_{\parallel} \in \mathbb{R}^2$. It is further bounded by
		\begin{align}
			|k_{M,2}^{\alpha \alpha(2)}(v,u_{\perp})|	&\leq \frac{C}{|u_{\perp}|} e^{-c\big[|v_{\perp}|^2+|u_{\perp}|^2\big]} \int_{\mathbb{R}^2} e^{-c|u_{\parallel} + v_{\parallel}|^2}\notag  \\
			&\hspace{2.5cm}\times  \left[|u_{\parallel}|^2 + |u_{\perp}|^2\right]^{\frac{\gamma - 1}{2}} \chi_m\big(\sqrt{|u_{\parallel}|^2 + |u_{\perp}|^2}\big) \frac{b^{ \alpha \beta}(\theta)}{|\cos \theta|} \, du_{\parallel}.\label{} 
		\end{align}
		
		Therefore, when $m^{\alpha}=m^{\beta}$, the integral kernel $k_{\alpha \beta}^{(2),r}(v,u_{\perp},u_{\parallel})$ degenerates into the integral kernel $k_{\alpha \alpha}^{(2),r}(v,u_{\perp})$, which is almost equal to $k_{\alpha \alpha}^{(1),r}(v,u_{\parallel})$ in the symmetric sense.
	\end{proof}
	
	$\mathbf{Step 2}$.  The decay estimate for $K_{M,2,w}^{\alpha,1-\chi}$. \\
	First, we prove the decay estimate for the part of $k_{M,2}^{\alpha \beta(1)}(v,u_{\parallel})$. The integration domain $D$ can be devided
	as follows:
	\begin{align*}
		D=\{(v,u)\in \mathbb{R}^3 \times \mathbb{R}^3\}=&\Biggl\{\Big[(|u_\parallel|\geq \frac{|u|}{\sqrt{2}})\cap(|u_\parallel|\geq \frac{|v|}{8})\Big]\bigcup\Big[(|u_\perp|\geq \frac{|u|}{\sqrt{2}})\cap(|u_\parallel|\geq \frac{|v|}{8})\Big]\\
		&\hspace{0.4cm}\bigcup\Big[(|u_\parallel|\geq \frac{|u|}{\sqrt{2}})\cap(|u_\perp|\geq \frac{|v|}{8})\Big]\bigcup\Big[(|u_\perp|\geq \frac{|u|}{2})\cap(|u_\perp|\geq \frac{|v|}{8})\Big]\\
		&\hspace{0.4cm}\bigcup\Big[(|v_\parallel|\geq \frac{|v|}{\sqrt{2}})\cap(|u_\parallel|\leq \frac{|v|}{8})\cap(|u_\perp|\leq \frac{|v|}{8})\Big]\\
		&\hspace{0.4cm}\bigcup\Big[(|v_\perp|\geq \frac{|v|}{\sqrt{2}})\cap(|u_\parallel|\leq \frac{|v|}{8})\cap(|u_\perp|\leq \frac{|v|}{8})\Big]\Biggr\}.
	\end{align*}
	Note that the above integral domains  satisfy
	\begin{align*}
		&\Big[(|u_\parallel|\geq \frac{|u|}{\sqrt{2}})\cap(|u_\parallel|\geq \frac{|v|}{8})\Big]  \supseteq \Big[(|u_\parallel|\geq \frac{|u|}{\sqrt{2}})\cap(|u_\perp|\geq \frac{|v|}{8})\Big],\\
		&\Big[(|u_\perp|\geq \frac{|u|}{\sqrt{2}})\cap(|u_\perp|\geq \frac{|v|}{8})\Big]  \supseteq \Big[(|u_\perp|\geq \frac{|u|}{\sqrt{2}})\cap(|u_\parallel|\geq \frac{|v|}{8})\Big].
	\end{align*}
	By the cutoff function, we have $|v-v_*|=|u|\geq 2 m$.  Without loss of generality, we assume $|v|\geq 1$. 
	
	\noindent {\bf The proof of $\eqref{a1a4main236}_2$}. It can be divided into the following four cases for discussion.

	\noindent {\bf Case A1. $\Big(|u_\parallel|\geq \frac{|u|}{\sqrt{2}}\Big)\cap\Big(|u_\parallel|\geq \frac{|v|}{8}\Big)$}.

	\noindent Since $\{1-\chi_m\}(\sqrt{|u_{\parallel}|^2 + |u_{\perp}|^2}) = 1-\chi_{m}$ vanishes near the origin and
	$|B(\theta)| \leq C|\cos \theta|$,  for the chosen small $1>m > 0$, we get
	\begin{equation}\label{cuttoffdelta}
		\begin{split}
			&\int_{\mathbb{R}^2} e^{-\hat{c}|u_{\perp} + v_{\perp}|^2} \left[|u_{\parallel}|^2 + |u_{\perp}|^2\right]^{\frac{\gamma - 1}{2}} \chi_m\big(\sqrt{|u_{\parallel}|^2 + |u_{\perp}|^2}\big) \frac{b^{ \alpha \beta}(\theta)}{|\cos \theta|} \, du_{\perp} \\
			&\hspace{0.5cm}\leq C_m \int_{\mathbb{R}^2} e^{-\hat{c}|u_\perp + v_\perp|^2} du_\perp < \infty.
		\end{split}
	\end{equation}
	Since
	\begin{equation*}
		w(v) \leq C w(v+\frac{2m^{\beta}}{m^{\alpha}+m^{\beta}}u_{\parallel}) w(\frac{2m^{\beta}}{m^{\alpha}+m^{\beta}}u_{\parallel}),
	\end{equation*}
	we deduce that
	\begin{equation}\label{contrlofP}
		\frac{w(v)}{w(v+\frac{2m^{\beta}}{m^{\alpha}+m^{\beta}}u_{\parallel})} \leq C  w(\frac{2m^{\beta}}{m^{\alpha}+m^{\beta}}u_{\parallel}) \leq C  e^{\frac{\bar{c}_1}{8}|u_{\parallel}|^2}.
	\end{equation}
	Notice that $|u_\parallel|\geq\frac{|v|}{8}\geq \frac{1}{8}$, it yields that
	\begin{equation*}
		\frac{w(v)}{w(v+\frac{2m^{\beta}}{m^{\alpha}+m^{\beta}}u_{\parallel})}(1+|v|+|u_{\parallel}|)^{1-\gamma}	\leq C e^{\frac{\bar{c}_1}{4}|u_{\parallel}|^2}.
	\end{equation*}
	Then, $\frac{w(v)}{w(v+\frac{2m^{\beta}}{m^{\alpha}+m^{\beta}}u_{\parallel})}|k_{M,2}^{\alpha \beta(1)}(v,u_{\parallel})|$ is bounded by
	\begin{align*}
		&\frac{w(v)}{w(v+\frac{2m^{\beta}}{m^{\alpha}+m^{\beta}}u_{\parallel})}\frac{1}{|u_{\parallel}|} e^{-\bar{c}_1[|v_{\parallel}|^2+|u_{\parallel}|^2]}
		\int_{\mathbb{R}^2} e^{-\hat{c}| u_{\perp}+v_{\perp}|^2} \left[|u_{\parallel}|^2 + |u_{\perp}|^2\right]^{\frac{\gamma - 1}{2}} \\
		&\hspace{5cm}\times \{1-\chi_m\}\big(\sqrt{|u_{\parallel}|^2 + |u_{\perp}|^2}\big) \frac{b^{ \alpha \beta}(\theta)}{|\cos \theta|} \, du_{\perp} \\
		\leq C& \frac{1}{|u_{\parallel}|} e^{-\frac{\bar{c}_1}{2}[|v_{\parallel}|^2+|u_{\parallel}|^2]} (1+|v|+|u_{\parallel}|)^{\gamma-1} .
	\end{align*}
	Therefore, we have completed the proof of $\eqref{a1a4main236}_2$ in case A1.

	\noindent {\bf Case A2. $\Big(|u_\perp|\geq \frac{|u|}{\sqrt{2}}\Big)\cap\Big(|u_\perp|\geq \frac{|v|}{8}\Big)$}.
	
	\noindent In this case $|u_\perp|\geq\frac{|v|}{8}\geq\frac{1}{8}$, it holds that
	\begin{align*}
		&\int_{\mathbb{R}^2} e^{-\hat{c}|u_{\perp} + v_{\perp}|^2} \left[|u_{\parallel}|^2 + |u_{\perp}|^2\right]^{\frac{\gamma - 1}{2}} \chi_m\big(\sqrt{|u_{\parallel}|^2 + |u_{\perp}|^2}\big) \frac{b^{ \alpha \beta}(\theta)}{|\cos \theta|} \, du_{\perp} \\
		&\hspace{0.4cm}\leq  C_{m}\int_{\mathbb{R}^2} e^{-\hat{c}|u_\perp + v_\perp|^2} |u_\perp|^{\gamma-1}du_\perp \\
		&\hspace{0.4cm}\leq C_{m}(1+|v|)^{\gamma-1}\int_{\mathbb{R}^2} e^{-\hat{c}|u_\perp + v_\perp|^2} du_\perp \\
		&\hspace{0.4cm}\leq C_{m}(1+|v|)^{\gamma-1},
	\end{align*}
	and if $|u_{\parallel}|>1$,
	\begin{equation*}
		\frac{w(v)}{w(v+\frac{2m^{\beta}}{m^{\alpha}+m^{\beta}}u_{\parallel})}(1+|u_{\parallel}|)^{1-\gamma} \leq C  w(\frac{2m^{\beta}}{m^{\alpha}+m^{\beta}}u_{\parallel}) (1+|u_{\parallel}|)^{1-\gamma}\leq C  e^{\frac{\bar{c}_1}{4}|u_{\parallel}|^2}.
	\end{equation*}
	if $|u_{\parallel}|\leq1$,
	\begin{equation*}
		\frac{w(v)}{w(v+\frac{2m^{\beta}}{m^{\alpha}+m^{\beta}}u_{\parallel})}(1+|u_{\parallel}|)^{1-\gamma} \leq C  w(\frac{2m^{\beta}}{m^{\alpha}+m^{\beta}}u_{\parallel}) (1+|u_{\parallel}|)^{1-\gamma}\leq C.
	\end{equation*}
	Then, we can bound $\frac{w(v)}{w(v+\frac{2m^{\beta}}{m^{\alpha}+m^{\beta}}u_{\parallel})}|k_{M,2}^{\alpha \beta(1)}(v,u_{\parallel})|$ by
	\begin{align*}
		&\frac{w(v)}{w(v+\frac{2m^{\beta}}{m^{\alpha}+m^{\beta}}u_{\parallel})}\frac{1}{|u_{\parallel}|} e^{-\bar{c}_1[|v_{\parallel}|^2+|u_{\parallel}|^2]}
		\int_{\mathbb{R}^2} e^{-\hat{c}| u_{\perp}+v_{\perp}|^2} \left[|u_{\parallel}|^2 + |u_{\perp}|^2\right]^{\frac{\gamma - 1}{2}} \\
		&\hspace{5cm}\times \{1-\chi_m\}\left(\sqrt{|u_{\parallel}|^2 + |u_{\perp}|^2}\right) \frac{b^{ \alpha \beta}(\theta)}{|\cos \theta|} \, du_{\perp} \\
		&\hspace{0.4cm}\leq C_{m} \frac{1}{|u_{\parallel}|} e^{-\frac{\bar{c}_1}{2}[|v_{\parallel}|^2+|u_{\parallel}|^2]} (1+|v|)^{\gamma-1}(1+|u_{\parallel}|)^{\gamma-1} \\
		&\hspace{0.4cm}\leq C_{m} \frac{1}{|u_{\parallel}|} e^{-\frac{\bar{c}_1}{2}[|v_{\parallel}|^2+|u_{\parallel}|^2]} (1+|v|+|u_{\parallel}|)^{\gamma-1}.
	\end{align*}
	Therefore, we have completed the proof of $\eqref{a1a4main236}_2$ in case A2.

	\noindent {\bf Case A3. $\Big(|v_\parallel|\geq \frac{|v|}{\sqrt{2}}\Big)\cap\Big(|u_\parallel|\leq \frac{|v|}{8}\Big)\cap\Big(|u_\perp|\leq \frac{|v|}{8}\Big)$}.
	
	\noindent 	Since
	\begin{equation*}
		v=v_{\parallel}+v_{\perp}, u=u_{\parallel}+u_{\perp}, \quad and \quad  |v|^2=|v_{\parallel}|^2+|v_{\perp}|^2, |u|^2=|u_{\parallel}|^2+|u_{\perp}|^2,
	\end{equation*}
	in this case we have  $|v_\parallel|\geq \frac{|v|}{\sqrt{2}}\geq |u|$ and
	\begin{equation*}
		\frac{w(v)}{w(v+\frac{2m^{\beta}}{m^{\alpha}+m^{\beta}}u_{\parallel})}(1+|u_{\parallel}|+|v|)^{1-\gamma} \leq C  w(\frac{2m^{\beta}}{m^{\alpha}+m^{\beta}}u_{\parallel}) (1+|u_{\parallel}|+|v|)^{1-\gamma}\leq C  e^{\frac{\bar{c}_1}{4}|v_{\parallel}|^2}.
	\end{equation*}
	By \eqref{cuttoffdelta}, $\frac{w(v)}{w(v+\frac{2m^{\beta}}{m^{\alpha}+m^{\beta}}u_{\parallel})}|k_{M,2}^{\alpha \beta(1)}(v,u_{\parallel})|$ is bounded by
	\begin{align*}
		&\frac{w(v)}{w(v+\frac{2m^{\beta}}{m^{\alpha}+m^{\beta}}u_{\parallel})}\frac{1}{|u_{\parallel}|} e^{-\bar{c}_1[|v_{\parallel}|^2+|u_{\parallel}|^2]}
		\int_{\mathbb{R}^2} e^{-\hat{c}| u_{\perp}+v_{\perp}|^2} \left[|u_{\parallel}|^2 + |u_{\perp}|^2\right]^{\frac{\gamma - 1}{2}} \\
		&\hspace{6cm}\times \{1-\chi_m\}\left(\sqrt{|u_{\parallel}|^2 + |u_{\perp}|^2}\right) \frac{b^{ \alpha \beta}(\theta)}{|\cos \theta|} \, du_{\perp} \\
		&\hspace{0.4cm}\leq C_{m} \frac{1}{|u_{\parallel}|} e^{-\frac{\bar{c}_1}{2}[|v_{\parallel}|^2+|u_{\parallel}|^2]} (1+|v|+|u_{\parallel}|)^{\gamma-1}.
	\end{align*}
	Therefore, we have completed the proof of $\eqref{a1a4main236}_2$ in case A3.

	\noindent {\bf Case A4. $\Big(|v_\perp|\geq \frac{|v|}{\sqrt{2}}\Big)\cap\Big(|u_\parallel|\leq \frac{|v|}{8}\Big)\cap\Big(|u_\perp|\leq \frac{|v|}{8}\Big)$}.
	
	\noindent In this case  it yields that
	\begin{equation*}
		|u_{\perp} + v_{\perp}|\geq \frac{|v|}{4} \geq |u|.
	\end{equation*}
	Then, we get
	\begin{align*}
		&\frac{w(v)}{w(v+\frac{2m^{\beta}}{m^{\alpha}+m^{\beta}}u_{\parallel})}(1+|u_{\parallel}|+|v|)^{1-\gamma} \\
		&\hspace{0.3cm}\leq C  w(\frac{2m^{\beta}}{m^{\alpha}+m^{\beta}}u_{\parallel}) (1+|u_{\parallel}|+|v|)^{1-\gamma}\\
		&\hspace{0.3cm}\leq C  e^{\frac{\bar{c}_1}{4}|u_{\perp} + v_{\perp}|^2}.
	\end{align*}
	Hence, $\frac{w(v)}{w(v+\frac{2m^{\beta}}{m^{\alpha}+m^{\beta}}u_{\parallel})}|k_{M,2}^{\alpha \beta(1)}(v,u_{\parallel})|$ can be bounded by
	\begin{align*}
		&\frac{w(v)}{w(v+\frac{2m^{\beta}}{m^{\alpha}+m^{\beta}}u_{\parallel})}\frac{1}{|u_{\parallel}|} e^{-\bar{c}_1[|v_{\parallel}|^2+|u_{\parallel}|^2]}
		\int_{\mathbb{R}^2} e^{-\hat{c}| u_{\perp}+v_{\perp}|^2} \left[|u_{\parallel}|^2 + |u_{\perp}|^2\right]^{\frac{\gamma - 1}{2}} \\
		&\hspace{6.5cm}\times \{1-\chi_m\}\left(\sqrt{|u_{\parallel}|^2 + |u_{\perp}|^2}\right) \frac{b^{ \alpha \beta}(\theta)}{|\cos \theta|} \, du_{\perp} \\
		&\hspace{0.5cm}\leq C_{m}\frac{(1+|v|+|u_{\parallel}|)^{\gamma-1}}{|u_{\parallel}|} e^{-\bar{c}_1[|v_{\parallel}|^2+|u_{\parallel}|^2]}
		\int_{\mathbb{R}^2} e^{-\frac{\hat{c}}{2}| u_{\perp}+v_{\perp}|^2} \left[|u_{\parallel}|^2 + |u_{\perp}|^2\right]^{\frac{\gamma - 1}{2}} \\
		&\hspace{6.5cm}\times \{1-\chi_m\}\left(\sqrt{|u_{\parallel}|^2 + |u_{\perp}|^2}\right) \frac{b^{ \alpha \beta}(\theta)}{|\cos \theta|} \, du_{\perp} \\
		&\hspace{0.5cm}\leq C_{m} \frac{1}{|u_{\parallel}|} e^{-\bar{c}_1[|v_{\parallel}|^2+|u_{\parallel}|^2]} (1+|v|+|u_{\parallel}|)^{\gamma-1}.
	\end{align*}
	Therefore, we have completed the proof of $\eqref{a1a4main236}_2$ in Case A4.
	
	\noindent {\bf The proof of \eqref{sofd237e}}.
	By the estimate of $k_{M,2}^{\alpha \beta(2)}(v,u_{\perp},u_{\parallel})$ in \eqref{final222kkk}$(\alpha \neq \beta)$, and
	\begin{equation*}
		\frac{w(v)}{w(v+u_{\perp}+\frac{m^{\beta}-m^{\alpha}}{m^{\alpha}+m^{\beta}}u_{\parallel})} \leq C w(u_{\perp})w(\frac{m^{\beta}-m^{\alpha}}{m^{\alpha}+m^{\beta}}u_{\parallel})\leq C e^{\frac{\bar{c}_2}{2}[|u_{\parallel}|^2+|u_{\perp}|^2]},
	\end{equation*}
	we immediately get 
	\begin{eqnarray*}
		\begin{split}
			\frac{w(v)}{w(v+u_{\perp}+\frac{m^{\beta}-m^{\alpha}}{m^{\alpha}+m^{\beta}}u_{\parallel})}|k_{M,2}^{\alpha\beta(2)}(v,u_{\perp},u_{\parallel})|\leq C_m\, e^{-c(|v|^2+|u_{\parallel}|^2+|u_{\perp}|^2)}.
		\end{split}
	\end{eqnarray*}
	
	\noindent {\bf The proof of \eqref{ZZZzequagezV}}.
	Comparing the two expressions \eqref{socd264} and \eqref{ker1rrr} for $\alpha=\beta$, we observe that they become identical if we interchange the symbols ``$\parallel$'' and ``$\perp$'' in one of the expressions. Thus, the analysis of $Case A_1-Case A_4$ for $k_{M,2}^{\alpha \beta(1)}(v,u_{\parallel})$ is  applicable to that of $k_{M,2}^{\alpha\alpha(2)}(v,u_{\perp})$. We therefore obtain
	\begin{eqnarray}\label{uperplifeq}
		\begin{split}
			\frac{w(v)}{w(v+u_{\perp})}|k_{M,2}^{\alpha\alpha(2)}(v,u_{\perp})|&\leq C_m\,  \frac{(1+|v|+|u_{\perp}|)^{\gamma-1}}{|u_{\perp}|} \, e^{-c(|u_{\perp}|^2+|v_{\parallel}|^2)}.
		\end{split}
	\end{eqnarray}

	\noindent {\bf The proof of \eqref{sec4inL1decayK12}}.
	 By $\eqref{a1a4main236}_2$, it follows that
	\begin{align}\label{dddless111}
		&\int_{\mathbb{R}^3}|k_{M,2}^{\alpha \beta(1)}(v,u_{\parallel})|\frac{w(v)}{w(v+\frac{2m^{\beta}}{m^{\alpha}+m^{\beta}}u_{\parallel})} d u_{\parallel}
		\leq C_{m} \int_{\mathbb{R}^3}\frac{(1+|v|+|u_{\parallel}|)^{\gamma-1}}{|u_{\parallel}|} \, e^{-c\{|u_{\parallel}|^2+|v_{\parallel}|^2\}} d u_{\parallel}.
	\end{align}
	Notice that  $|u_{\parallel}|^2+|v_{\parallel}|^2 \geq \frac{1}{2} (|u_{\parallel}|^2+|u_{\parallel}|\cdot|v_{\parallel}|)$, then  \eqref{dddless111} is bounded by
	\begin{equation*}
		C_{m} \int_{\mathbb{R}^3}\frac{(1+|v|+|u_{\parallel}|)^{\gamma-1}}{|u_{\parallel}|} \, e^{-c\{|u_{\parallel}|^2+|u_{\parallel}|\cdot|v_{\parallel}|\}} d u_{\parallel}.
	\end{equation*}
	Since $|u_{\parallel}|\cdot|v_{\parallel}|=|v\cdot u_{\parallel}|$, and $u_{\parallel}\in \mathbb{R}^3$, we decompose $u_{\parallel}$ by
	\begin{equation*}
		u_{\parallel}=u^{0}_{\parallel}+u^{1}_{\parallel},\qquad  u^{0}_{\parallel} \parallel v, \qquad u^{1}_{\parallel} \perp v,
	\end{equation*}
	where $u^{0}_{\parallel}\in\mathbb{R}^1$, $u^{1}_{\parallel}\in\mathbb{R}^2$. So, it yields that
	\begin{align}
		&C_{m} \int_{\mathbb{R}^1}\frac{(1+|v|+|u_{\parallel}|)^{\gamma-1}}{|u_{\parallel}|} \, e^{-c\{|u_{\parallel}|^2+|u_{\parallel}|\cdot|v_{\parallel}|\}} d u_{\parallel} \notag\\
		&\hspace{0.4cm}\leq C_{m}(1+|v|)^{\gamma-1} \int_{\mathbb{R}^1} \, e^{-c|v\cdot u_{\parallel}|} d u^0_{\parallel}\int_{\mathbb{R}^2} \frac{1}{|u^1_{\parallel}|}e^{-c|u^1_{\parallel}|^2}d u^1_{\parallel}\notag\\
		&\hspace{0.4cm}\leq C_{m}(1+|v|)^{\gamma-1} \int_{\mathbb{R}^1} \, e^{-c\{|v| u^0_{\parallel}\}} d u^0_{\parallel}\notag\\
		&\hspace{0.4cm}\leq C_{m}(1+|v|)^{\gamma-2} \int_{\mathbb{R}^1} \, e^{-c\{|v| u^0_{\parallel}\}} d \{|v|u^0_{\parallel}\}\notag\\
		&\hspace{0.4cm} \leq C_{m}(1+|v|)^{\gamma-2}.\label{lstintg270}
	\end{align}
	Therefore, we complete the proof of $\eqref{sec4inL1decayK12}_1$. By \eqref{uperplifeq}, applying‌ an analogous procedure to \eqref{dddless111}-\eqref{lstintg270} ‌with $u_{\parallel}$ and $u_{\perp}$ interchanged‌, we can prove
	\begin{eqnarray*}
		\begin{split}
			\int_{\mathbb{R}^3}\big|k_{M,2}^{\alpha \alpha(2)}(v,u_{\perp})\big|\frac{w(v)}{w(v+u_{\perp})} d u_{\perp} \leq C_m \left \langle v \right\rangle^{\gamma-2}.
		\end{split}
	\end{eqnarray*}

		In summary, this completes the proof of Lemma \ref{LeMK2ker4444}.
	
	\noindent {\bf The proof of remark \ref{kernalcompwithsinglek}}.
	
	\noindent  We change the variable $v+\frac{2m^{\beta}}{m^{\alpha}+m^{\beta}}u_{\parallel}\rightarrow v^*$ to get
	\begin{equation*}
		\int_{\mathbb{R}^3}k_{M,2}^{\alpha \beta(1)}(v,u_{\parallel})\frac{w(v)}{w(v+\frac{2m^{\beta}}{m^{\alpha}+m^{\beta}}u_{\parallel})}g^{\alpha}(v+\frac{2m^{\beta}}{m^{\alpha}+m^{\beta}}u_{\parallel}) d u_{\parallel}=	\int_{\mathbb{R}^3}\tilde{k}_{M,2}^{\alpha \beta(1)}(v,v^*)\frac{w(v)}{w(v^*)}g^{\alpha}(v^*) d v^*.
	\end{equation*}
	Then, by \eqref{bdskm21} and \eqref{contrlofP}, adjusting the constant $c$, it holds that
	\begin{eqnarray}\label{rek4444}
		\begin{split}
			\frac{w(v)}{w(v_*)}|\tilde{k}_{M,2}^{\alpha \beta(1)}(v,v_*)|\leq C_{m} \frac{(1+|v|+|v_*-v|)^{\gamma-1}}{|v-v_*|} \, e^{-c\{|v-v_*|^2+|v_{\parallel}|^2\}}.
		\end{split}
	\end{eqnarray}
	Since
	\begin{equation*}
		v+v_*=2v+\frac{2m^{\beta}}{m^{\alpha}+m^{\beta}}u_{\parallel}, \qquad  (|v_{\parallel}|^2+|u_{\parallel}|^2) \thicksim (|2v_{\parallel}+\frac{2m^{\beta}}{m^{\alpha}+m^{\beta}}u_{\parallel}|^2+|u_{\parallel}|^2),
	\end{equation*}
	Adjusting the constant $c$, the exponential in \eqref{rek4444} can be replaced by
	\begin{equation*}
	 e^{-c\{|v-v_*|^2+|2v_{\parallel}+\frac{2m^{\beta}}{m^{\alpha}+m^{\beta}}u_{\parallel}|^2\}}.
	\end{equation*}
	Furthermore, we write $2v_{\parallel}+\frac{2m^{\beta}}{m^{\alpha}+m^{\beta}}u_{\parallel}$ as
	\begin{equation*}
		\big|2v_{\parallel}+\frac{2m^{\beta}}{m^{\alpha}+m^{\beta}}u_{\parallel}\big|=\frac{\big|\left \langle 2v+\frac{2m^{\beta}}{m^{\alpha}+m^{\beta}}u_{\parallel}, \frac{2m^{\beta}}{m^{\alpha}+m^{\beta}}u_{\parallel} \right \rangle_{v}\big|}{|\frac{2m^{\beta}}{m^{\alpha}+m^{\beta}}u_{\parallel}|}=\frac{|\left \langle v+v_*, v-v_* \right \rangle_{v}|}{|v-v_*|}=\frac{||v|^2-|v_*|^2|}{|v-v_*|}.
	\end{equation*}
	So we get
	\begin{equation*}
		e^{-c\{|v-v_*|^2+|v_{\parallel}|^2\}} \thicksim e^{-c\{|v-v_*|^2+\frac{||v|^2-|v_*|^2|^2}{|v-v_*|^2}\}}.
	\end{equation*}
	Combine with
	\begin{equation*}
		(1+|v|+|v_*-v|) \thicksim (1+|v|+|v_*|),
	\end{equation*}
	thus we complete the proof of remark \ref{kernalcompwithsinglek} .
\end{proof}

\noindent {\bf The proof of remark \ref{rek26262}}.
\begin{proof}
\noindent By \eqref{lem431123512}, 
\begin{eqnarray}\label{v*gg}
	\begin{split}
		\prescript{(\alpha \neq \beta)}{}{K}_{M,2,w}^{\alpha,1-\chi,(2)}\mathbf{g}(v)&=:\int_{\mathbb{R}^3}\int_{\mathbb{R}^2}\frac{w(v)}{w(v+u_{\perp}+\frac{m^{\beta}-m^{\alpha}}{m^{\alpha}+m^{\beta}}u_{\parallel})}k_{M,2}^{\alpha \beta(2)}(v,u_{\perp},u_{\parallel})\\ &\hspace{5cm}\times g^{\beta}(v+u_{\perp}+\frac{m^{\beta}-m^{\alpha}}{m^{\alpha}+m^{\beta}}u_{\parallel}) d u_{\perp} d u_{\parallel}, \qquad \alpha \neq \beta.
	\end{split}
\end{eqnarray}
From \eqref{impykba}, \eqref{CE2ESYCGV}, it yields 
\begin{equation*}
2|u_{\parallel}|^{-2}	d u_{\perp} d u_{\parallel}= du d \omega = dv_* d \omega.
\end{equation*}
Note that 
\begin{equation*}
	v_*'=v+u_{\perp}+\frac{m^{\beta}-m^{\alpha}}{m^{\alpha}+m^{\beta}}u_{\parallel},
\end{equation*}
we need to  calculate the Jacobian determinant for the change of variables $v_* \rightarrow 	v_*'$. The Jacobian matrix is
\begin{equation*}
	\mathbf{J} =\Big(   
		   \frac{\partial v'_*}{\partial v_*} \Big)_{3 \times 3}
=
		 \Big(\mathbf{I}-\frac{2m^{\alpha}}{m^\alpha+m^\beta}\omega \omega^{\rm{T}}\Big)_{3 \times 3},
\end{equation*}
where $\mathbf{I}$ is a $3\times3$ identity matrix, and $\omega=(\omega_1,\omega_2,\omega_3)^{\rm{T}}\in \mathbb{S}^2$ is a column vector. Its Jacobian determinant is
\begin{align*}
	\det\mathbf{J} =& \det \Big(\mathbf{I}-\frac{2m^{\alpha}}{m^\alpha+m^\beta}\omega \omega^{\rm{T}}\Big) \\
	=& \left|\begin{array}{ccc}
		1-\frac{2m^{\alpha}}{m^\alpha+m^\beta}\omega_1^2 & -\frac{2m^{\alpha}}{m^\alpha+m^\beta}\omega_1\omega_2  & -\frac{2m^{\alpha}}{m^\alpha+m^\beta}\omega_1\omega_3 \\
		-\frac{2m^{\alpha}}{m^\alpha+m^\beta}\omega_2\omega_1 & 1-\frac{2m^{\alpha}}{m^\alpha+m^\beta}\omega_2\omega_2 & -\frac{2m^{\alpha}}{m^\alpha+m^\beta}\omega_2\omega_3 \\
		-\frac{2m^{\alpha}}{m^\alpha+m^\beta}\omega_3\omega_1 & -\frac{2m^{\alpha}}{m^\alpha+m^\beta}\omega_3\omega_2  & 1-\frac{2m^{\alpha}}{m^\alpha+m^\beta}\omega_3\omega_3 
	\end{array}\right|\\
	=&(1-\frac{2m^{\alpha}}{m^\alpha+m^\beta}\omega_1^2)\Big[(1-\frac{2m^{\alpha}}{m^\alpha+m^\beta}\omega_2^2)(1-\frac{2m^{\alpha}}{m^\alpha+m^\beta}\omega_3^2)-(\frac{2m^{\alpha}}{m^\alpha+m^\beta})^2\omega_2^2\omega_3^2\Big]\\
	&+\frac{2m^{\alpha}}{m^\alpha+m^\beta}\omega_1\omega_2\Big[-\frac{2m^{\alpha}}{m^\alpha+m^\beta}\omega_1\omega_2(1-\frac{2m^{\alpha}}{m^\alpha+m^\beta}\omega^2_3)-(\frac{2m^{\alpha}}{m^\alpha+m^\beta})^2\omega_1\omega_2\omega^2_3\Big]
	\\&-\frac{2m^{\alpha}}{m^\alpha+m^\beta}\omega_1\omega_3\big[(\frac{2m^{\alpha}}{m^\alpha+m^\beta})^2\omega_1\omega^2_2\omega_3+\frac{2m^{\alpha}}{m^\alpha+m^\beta}\omega_1\omega_3(1-\frac{2m^{\alpha}}{m^\alpha+m^\beta}\omega_2^2)\big]
	\\=& 1-\frac{2m^{\alpha}}{m^\alpha+m^\beta}\omega^2_1-\frac{2m^{\alpha}}{m^\alpha+m^\beta}\omega^2_2-\frac{2m^{\alpha}}{m^\alpha+m^\beta}\omega^2_3,\\
	=& \frac{m^{\beta}-m^{\alpha}}{m^\alpha+m^\beta} \neq 0.
\end{align*}
We denote $u^*=v_*'-v$, then we write \eqref{v*gg} as 
\begin{eqnarray}\label{u*gg}
	\begin{split}
		\prescript{(\alpha \neq \beta)}{}{K}_{M,2,w}^{\alpha,1-\chi,(2)}\mathbf{g}(v)&=:\int_{\mathbb{R}^3}\int_{\mathbb{R}^2}\frac{\frac{1}{2}|u_{\parallel}|^{2}w(v)}{w(v+u^*)}k_{M,2}^{\alpha \beta(2)}(v,u_{\perp},u_{\parallel}) g^{\beta}(v+u^*) d u^* d \omega, \qquad \alpha \neq \beta.
	\end{split}
\end{eqnarray}
According to the energy conservation equality
\begin{equation*}
	|v|^2+|v_*|^2=|v'|^2+|v_*'|^2,
\end{equation*}
and the fact that
\begin{equation*}
	(1+|v|+|u_*-v|) \thicksim (1+|v|+|u_*|), \quad   (1+|v|+|v_*-v|) \thicksim (1+|v|+|v_*|),
\end{equation*}
we have 
\begin{equation}\label{zhiseee}
	(|v|^2+|u|^2) \geq c (|v'|^2+|v_*'|^2+|v|^2)\geq c (|v|^2+|u^*|^2).
\end{equation}
Let 
\begin{equation*}
	\hat{k}_{M,2}^{\alpha \beta(2)}(v,u^*)= C e^{-\frac{c}{2}(|v|^2+|u^*|^2)},
\end{equation*}
we note that
\begin{equation*}
	\frac{|u_{\parallel}|^{2}w(v)}{w(v+u^*)} \leq C e^{\frac{c}{2}(|v|^2+|u^*|^2)},
\end{equation*}
by \eqref{sofd237e} and \eqref{zhiseee}, it yields
\begin{equation*}
	|k_{M,2}^{\alpha \beta(2)}(v,u_{\perp},u_{\parallel})|	\leq C e^{-c(|v|^2+|u^*|^2)}.
\end{equation*}
Therefore
\begin{equation*}
\frac{w(v)}{w(v+u^*)}|k_{M,2}^{\alpha \beta(2)}(v,u_{\perp},u_{\parallel})|	\leq \hat{k}_{M,2}^{\alpha \beta(2)}(v,u^*).
\end{equation*}
Then,
\eqref{u*gg} is bounded by
\begin{eqnarray*}
	\begin{split}
		\big|\prescript{(\alpha \neq \beta)}{}{K}_{M,2,w}^{\alpha,1-\chi,(2)}\mathbf{g}(v)\big| &\leq \int_{\mathbb{R}^3}\int_{\mathbb{R}^2}\hat{k}_{M,2}^{\alpha \beta(2)}(v,u^*) g^{\beta}(v+u^*) d u^* d \omega \\
		&\leq 4 \pi \int_{\mathbb{R}^3}\hat{k}_{M,2}^{\alpha \beta(2)}(v,u^*) g^{\beta}(v+u^*) d u^*  .
	\end{split}
\end{eqnarray*}
This completed the proof of remark \ref{rek26262} .
\end{proof}

We have now completed the proof of the key Lemma \ref{LeMK2ker4444}, and proceed to present the $L^{\infty}_{x,v}$-estimates.

Let $\mathbf{h}_R $ is defined in \eqref{littleh3.2} and $\upsilon^\alpha(t,x,v)$ is defined in \eqref{ffqzdf}. We could derive to
\begin{gather}\label{4.14A}
	\begin{split}
		\partial_t  h^\alpha_R & + v\cdot\nabla_{x} h^\alpha_R  + \frac{1}{\varepsilon} \upsilon^\alpha h^\alpha_R=  \mathcal{R}^\alpha_1+ \mathcal{R}^\alpha_2+ \mathcal{R}^\alpha_3 + \frac{1}{\varepsilon}(K_{\chi}^{\alpha,w}+K_{1-\chi}^{\alpha, w })\mathbf{h}_R,
	\end{split}
\end{gather}
where
\begin{gather*}
	\begin{split}
		\mathcal{R}^\alpha_1=& \varepsilon^2  \frac{ w}{\sqrt{\mu^{\alpha}_M}}\sum_{\beta=A,B} Q^{ \alpha\beta }(\frac{\sqrt{\mu^{\alpha}_M}}{w}h^\alpha_R, \frac{\sqrt{\mu^{\beta}_M}}{w} h^\beta_R),\\
		\mathcal{R}_2^\alpha= & \frac{w}{\sqrt{\mu^{\alpha}_M}} \sum_{\beta=A,B}\sum_{i=1}^{5} \varepsilon^{i-1}\Big[ Q^{ \alpha \beta}(F_i^{\alpha},\frac{\sqrt{\mu^{\beta}_M}}{w}h^\beta_R)+Q^{\alpha \beta}(\frac{\sqrt{\mu^{\alpha}_M}}{w}h^\alpha_R, F_i^{\beta})\Big], \\
		%&\hspace{3cm} +Q^{BA}(F_i^{B},\frac{\sqrt{\mu_M}}{w}h^A_R) + Q^{BA}(\frac{\sqrt{\mu_M}}{w}h^B_R,F_i^{A})\Big],\\
		\mathcal{R}^\alpha_3 = & \varepsilon^2 \frac{w}{\sqrt{\mu^{\alpha}_M}}  \Big\{-( \partial_t +v\cdot \nabla_{x} ) F^\alpha_5 \\
		& \vspace{5pt}+\sum_{\beta=A,B} \sum_{ i,j \leq 5 ; i+j\geq 6 } \varepsilon^{i+j-6}\Big[Q^{\alpha \beta }(F_i^{\alpha},F_j^{\beta})+Q^{\alpha\beta }(F_j^{\alpha},F_i^{\beta})\Big]\Big\},
	\end{split}
\end{gather*}

The characteristic curve of \eqref{4.14A} through $(t,x)$ is defined as the solution of the equation
\begin{gather*}
	\frac{d}{ds} X(s)=v, \quad X|_{s=t}= x, \quad{\rm i.e.} ~~ X(s)= x-v (t-s).
\end{gather*}
Along this characteristics, the equation \eqref{4.14A} gives the expression of $h^\alpha_R$ as
\begin{gather}\label{4.15}
	\begin{split}
		h^\alpha_R(t,x,v) = & \exp \left(-\frac{1}{\varepsilon}\int_{0}^{t}\upsilon^{\alpha}(\tau, X(\tau),v) d\tau\right)h^\alpha_R(0,X(0),v) \\
		& + \int_{0}^{t}\exp \left(-\frac{1}{\varepsilon}\int_{s}^{t}\upsilon^{\alpha}(\tau,  X(\tau),v)   d\tau \right) \sum_{j=1}^3\mathcal{R}^\alpha_j(s, X(s),v)ds\\
		&+ \frac{1}{\varepsilon} \int_{0}^{t}\exp \left(-\frac{1}{\varepsilon}\int_{s}^{t}\upsilon^{\alpha}(\tau,  X(\tau),v)  d\tau \right)  K_{M,w}^{\alpha,\chi} \mathbf{h}_R (s, X(s),v)ds\\
		&+ \frac{1}{\varepsilon} \int_{0}^{t}\exp \left(-\frac{1}{\varepsilon}\int_{s}^{t}\upsilon^{\alpha}(\tau,  X(\tau),v)   d\tau \right)  K_{M,w}^{\alpha,1-\chi} \mathbf{h}_R (s, X(s),v)ds.
	\end{split}
\end{gather}

\begin{proposition}\label{proposition4.3}
	Let $(n^{A}_{\delta},n^{B}_{\delta},\mathbf{u}_{\delta},\theta_{\delta})  \in C([0,\tau] ; H^{s_0}(\mathbb{{R}}^3)) \cap C^{1}([0,\tau] ; H^{s_0-1}(\mathbb{{R}}^3))$  be the  solution of \eqref{EQF0EPSION} which is established in Lemma \ref{cpesystemlem}.
	Let $\mathbf{F}=(F^A_R,F^B_R)^\top$ is the remainder term defined in \eqref{reeqmain}, and $\mathbf{f}_R,\, \mathbf{h}_R$ are defined in \eqref{littlef3.1} and \eqref{littleh3.2}.  %Furthermore, $c_0 >0$ be the constant as in the coercivity
	%		estimate \eqref{Coercivity1.22}.
	Then there exist $\varepsilon_0 >0$ and $C_{\tau}>0$ such that for all $0<\varepsilon < \varepsilon_0$, it holds
	\begin{equation}\label{4.13}
		\begin{split}
			\mathop{sup}_{0\leq t \leq \tau}(\varepsilon^{\frac{3}{2}}\Vert
			\mathbf{h}_R(t)\Vert_{L^{\infty}_{x,v} })
			\leq C_{\tau} \left(\varepsilon^{\frac{3}{2}}\Vert
			\mathbf{h}^{in}_{R}\Vert_{L^{\infty}_{x,v} }+\sup_{0\leq t \leq \tau} \Vert
			\mathbf{f}_R(t)\Vert_{L^{2}_{x,v} } +\varepsilon^{\frac{7}{2}}\right),
		\end{split}	
	\end{equation}
	where the initial data is given by $\mathbf{h}_R(0)=\mathbf{h}^{in}_{R}$.
\end{proposition}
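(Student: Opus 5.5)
We follow Guo's $L^2$--$L^\infty$ scheme \cite{[22]Guo2010CPAM}, starting from the mild formulation \eqref{4.15} along the characteristics $X(s)=x-v(t-s)$. Since $\upsilon^\alpha\ge C^{-1}\langle v\rangle^{\gamma}$, every exponential factor is bounded by $e^{-\nu(v)(t-s)/\varepsilon}$ with $\nu(v)\sim\langle v\rangle^\gamma$.

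\textbf{Direct terms.} The initial term is bounded by $\|\mathbf{h}_R^{in}\|_{L^\infty}$. For the source terms we use the following bounds:
\begin{itemize}
\item The weighted nonlinear estimate $|\langle v\rangle^{-\gamma}\mathcal{R}_1^\alpha|\lesssim \varepsilon^2\|\mathbf{h}_R\|_{L^\infty}^2$, valid once $l$ is large.
\item Lemma \ref{FI2KM1EST} together with \eqref{1.36}, which give $|\langle v\rangle^{-\gamma}\mathcal{R}_2^\alpha|\lesssim\|\mathbf{h}_R\|_{L^\infty}$ and $|\mathcal{R}_3^\alpha|\lesssim\varepsilon^2$.
\item The identity $\frac1\varepsilon\int_0^t e^{-\nu(t-s)/\varepsilon}\nu\,ds\le1$.
\end{itemize}
With these, the source contributions are bounded by $\varepsilon^3\|\mathbf{h}_R\|^2+\varepsilon\|\mathbf{h}_R\|+\varepsilon^3$, uniformly in $s\le t$. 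The $K^{\alpha,\chi}_{M,w}$ term is handled by Lemma \ref{lemma4.1}: it contributes $Cm^{3+\gamma}\sup_s\|\mathbf{h}_R(s)\|_{L^\infty}$, which can be absorbed once $m$ is small.

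\textbf{Main term.} The heart of the argument is the $K^{\alpha,1-\chi}_{M,w}$ term, which we iterate a second time using \eqref{4.15} for $h^{\beta}_R(s,X(s),v')$. This yields a double integral
\[
\frac1{\varepsilon^2}\int_0^t\!\!\int_0^s e^{-\frac{\nu(v)(t-s)}{\varepsilon}}e^{-\frac{\nu(v')(s-s_1)}{\varepsilon}}\int\!\!\int |k_w(v,v')k_w(v',v'')|\,|h_R(s_1,X(s)-v'(s-s_1),v'')|\,dv''dv'\,ds_1ds .
\]
To put the kernel in a usable form, we use Lemma \ref{LeMK2ker4444} together with Remarks \ref{kernalcompwithsinglek} and \ref{rek26262}:
\begin{itemize}
\item The parts $K_{M,1,w}^{\alpha,1-\chi}$ and the $\alpha\neq\beta$ hybrid part have Gaussian kernels $\hat k(v,u^*)$ in $(v,u^*)$. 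These parts are the easiest.
\item The typical part $\tilde k^{\alpha\beta(1)}$ and the $\alpha=\beta$ hybrid part satisfy exactly Guo's single-species bound $\frac{(1+|v|+|v^*|)^{\gamma-1}}{|v-v^*|}e^{-c\{|v-v^*|^2+||v|^2-|v^*|^2|^2/|v-v^*|^2\}}$, with the $L^1$ decay \eqref{sec4inL1decayK12}.
\end{itemize}
Hence, after the change of variables to $v^*$ (respectively $u^*$), every piece is dominated by a single-species kernel $k_w(v,v^*)$, and Guo's case analysis applies verbatim. That case analysis runs as follows:
\begin{itemize}
\item Cut off large velocities $|v|\ge N$ or $|v'|\ge 2N$, $|v''|\ge3N$. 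These contribute $\frac{C}{N}\sup\|\mathbf{h}_R\|$ via the $\langle v\rangle^{\gamma-2}$ decay (in the soft case we use Guo's $\langle v\rangle^{-\gamma}$ compensation with the time factor).
\item Cut off the short time interval $s-s_1\le\kappa\varepsilon$. This contributes $C\kappa\sup\|\mathbf{h}_R\|$.
\item Cut off the near-singular sets of the kernels, approximating $k_w$ by a bounded compactly supported $k_N$ with small $L^1$ error.
\item On the remaining bounded set, write $|h_R|\lesssim \langle v''\rangle^{2l}\mu_M^{-1/2}\sqrt{\mu_\delta}|f_R|\lesssim |f_R|$ and change variables $y=X(s)-v'(s-s_1)$, $dv'=dy/(s-s_1)^3$. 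The Jacobian is bounded below since $s-s_1\ge\kappa\varepsilon$, and Cauchy–Schwarz bounds this piece by $\frac{C_{N,\kappa}}{\varepsilon^{3/2}}\sup_s\|\mathbf{f}_R(s)\|_{L^2}$.
\end{itemize}

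\textbf{Conclusion.} Collecting the bounds gives
\[
\sup_t\|\mathbf{h}_R\|\le C\|\mathbf{h}_R^{in}\|+\Big(Cm^{3+\gamma}+\tfrac CN+C\kappa+C\varepsilon+C\varepsilon^3\sup\|\mathbf{h}_R\|\Big)\sup\|\mathbf{h}_R\|+\tfrac{C}{\varepsilon^{3/2}}\sup\|\mathbf{f}_R\|_{L^2}+C\varepsilon^2 .
\]
We choose $m,\kappa$ small and $N$ large, and close the quadratic term with a continuity argument that keeps $\varepsilon^{3/2}\|\mathbf{h}_R\|$ bounded, so that $\varepsilon^3\|\mathbf{h}_R\|\le\varepsilon^{3/2}$ is small. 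Multiplying by $\varepsilon^{3/2}$ then gives \eqref{4.13}.

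The main obstacle is the double-Duhamel step for the $\alpha\ne\beta$ hybrid part. There the argument of $g^\beta$ is not a simple translate of a single integration variable, so it must first be rewritten through Remark \ref{rek26262}, using the nonzero Jacobian $\frac{m^\beta-m^\alpha}{m^\alpha+m^\beta}$, to obtain a genuine kernel in $v^*$ before Guo's change of variables $v'\mapsto y$ can be applied. For the soft-potential range, the $\nu$-degeneracy additionally requires Guo's splitting of the time integral using $\langle v\rangle^{\gamma}$.
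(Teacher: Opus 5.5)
Your proposal is correct and follows essentially the paper's route. The paper proves this proposition only by invoking Lemma~\ref{LeMK2ker4444} and Remarks~\ref{kernalcompwithsinglek}--\ref{rek26262}, which turn every piece of $K^{\alpha,1-\chi}_{M,w}$ into either a Gaussian kernel ($K_{M,1}$ and the $\alpha\neq\beta$ hybrid part, the latter via the Jacobian $\frac{m^\beta-m^\alpha}{m^\alpha+m^\beta}$) or a Guo-type single-species kernel, and then defers to the $L^2$--$L^\infty$ double-Duhamel argument of \cite{[50]Wu2023JDE} and \cite{[22]Guo2010CPAM}, which is exactly what you reconstruct (your continuity argument for the quadratic $\varepsilon^{3}\|\mathbf{h}_R\|^2$ term is the same implicit bootstrap the paper uses when combining this estimate with Proposition~\ref{L2estimate} in the proof of Theorem~\ref{mainthemCPE}).
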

\noindent By Lemma \eqref{LeMK2ker4444}, Remark \ref{kernalcompwithsinglek} and Remark \ref{rek26262}, we can use the proof in \cite{[50]Wu2023JDE} page 446-454.\\

\hspace{14cm}\qedsymbol  \\

\subsection{The hydrodynamic limit}
We now proceed to the proof of Theorem \ref{mainthemCPE}.

\begin{proof} Proposition \ref{L2estimate} and  Proposition \ref{proposition4.3} give us that
	\begin{align*}
		&\frac{d}{dt}\Vert \mathbf{f}_R \Vert_{L^{2}_{x,v} }^{2}+\frac{c_0}{2\varepsilon}\Vert(\mathbf{I}-\mathcal{P}\}\mathbf{f}_R\Vert_{\nu}^2\\
		& \hspace{0.5cm}\leq  C \left(\sqrt{\varepsilon}\Big[\sup_{0 \leq s \leq t} \Vert \mathbf{f}_R(s) \Vert_{L^{2}_{x,v} }+ \Vert\varepsilon^{\frac{3}{2}}\mathbf{h}_{R}^{\rm in}\Vert_{L^{\infty}_{x,v} }+\varepsilon^{\frac{7}{2}}\Big]+1\right) (\Vert \mathbf{f}_R\Vert_{L^{2}_{x,v} }^{2}+\Vert \mathbf{f}_R\Vert_{L^{2}_{x,v} }).		
	\end{align*}
	Gronwall inequality yields
	\begin{equation*}
		\Vert \mathbf{f}_R \Vert_{L^{2}_{x,v} }+1\leq (\Vert \mathbf{f}^{\rm in}_R\Vert_{L^{2}_{x,v} }+1)\exp\Big\{Ct\{2+\sqrt{\varepsilon}(\sup_{0 \leq s \leq t} \Vert \mathbf{f}_R(s) \Vert_{L^{2}_{x,v} }+ \Vert\varepsilon^{\frac{3}{2}}\mathbf{h}_{R}^{\rm in}\Vert_{L^{\infty}_{x,v} })\}\Big\}.
	\end{equation*}
	For bounded $\sup_{0 \leq s \leq t} \Vert \mathbf{f}_R(s) \Vert_{L^{2}_{x,v} }$, and for $\varepsilon$ small, using the Taylor expansion of the
	exponential function in the above inequality, we obtain
	\begin{equation*}
		\Vert \mathbf{f}_R \Vert_{L^{2}_{x,v} } \leq C (\Vert \mathbf{f}^{\rm in}_R\Vert_{L^{2}_{x,v} }+1) \{1+\sqrt{\varepsilon}(\sup_{0 \leq s \leq t} \Vert \mathbf{f}_R(s) \Vert_{L^{2}_{x,v}}+ \Vert\varepsilon^{\frac{3}{2}}\mathbf{h}_{R}^{\rm in}\Vert_{L^{\infty}_{x,v}})\}
	\end{equation*}
	For $0<t\leq\tau$ and small enough $\varepsilon$, it yields that
	\begin{equation*}
		\sup_{0 \leq t \leq \tau} \Vert \mathbf{f}_R(s) \Vert_{L^{2}_{x,v} } \leq C_{\tau}\{1+\Vert \mathbf{f}^{\rm in}_R\Vert_{L^{2}_{x,v} }+\Vert\varepsilon^{\frac{3}{2}}\mathbf{h}_{R}^{\rm in}\Vert_{L^{\infty}_{x,v} }\}
	\end{equation*}
	This completed the proof of Theorem \ref{mainthemCPE}.
\end{proof}

\section{Acoustic Limit}

The acoustic system is the linearization around the uniform equilibrium (1,1,0,1) of this compressible Euler system for mixed fluids. Its fluid fluctuations $(\sigma^A,\sigma^B,\mathbf{u},\theta)$ satisfy the following linear system
\begin{equation}\label{EQF0EPSIONline}
	\left \{
	\begin{array}{lll}
		\partial_t \sigma^{A} +\nabla_x \cdot \mathbf{u}=0, \\[2mm]
		\partial_t \sigma^{B} +\nabla_x \cdot \mathbf{u}=0, \\[2mm]
		\partial_t  \mathbf{u}
		+ \frac{2\nabla \theta}{m^A+m^B}+ \frac{\nabla n}{m^A+m^B}
		=0, \\[2mm]
		\partial_t \theta + \frac{2}{3}  \nabla_x \cdot \mathbf{u} = 0,
	\end{array}
	\right.
\end{equation}
where $n=\sigma^A+\sigma^B$, with initial data
\begin{equation}\label{lindt}
	(\sigma^A,\sigma^B,\mathbf{u},\theta)(0,\mathbf{x}) = (\sigma^{A,\rm in}, \sigma^{B,\rm in}, \mathbf{u}^{\rm in},  \theta^{\rm in}).
\end{equation}

\subsection{The Acoustic System for gas mixture}
We let $w=\nabla \times \mathbf{u}$, and apply operator $\partial_t$ to $\eqref{EQF0EPSIONline}_3$, it holds that
\begin{align}
	\partial_{t} w=0.
\end{align}
We deduce that
\begin{align}
	w=\nabla \times \mathbf{u}^{\rm{in}}=:w^{\rm{in}}.
\end{align}
According to the curl identity
\begin{equation}\label{irrneeded}
	\nabla(\nabla\cdot\mathbf{u})=\Delta \mathbf{u} + \nabla \times(\nabla \times\mathbf{u}).
\end{equation}
We apply operator $\partial_t$ to $\eqref{EQF0EPSIONline}_3$, and substituting $\eqref{EQF0EPSIONline}_1, \eqref{EQF0EPSIONline}_2$, $\eqref{EQF0EPSIONline}_4$ into it, we obtain:
\begin{equation}
	\partial^2_{tt}\mathbf{u}-\frac{10}{3(m^A+m^B)}\Delta\mathbf{u}=\frac{10}{3(m^A+m^B)}(\nabla \times w^{\rm{in}}).
\end{equation}
Since the source term on the right-hand side is known, the equation above can be solved either by the Fourier transform method or by Poisson's formula. The solution is then substituted into $\eqref{EQF0EPSIONline}_1, \eqref{EQF0EPSIONline}_2$ and $\eqref{EQF0EPSIONline}_4$ to obtain $\sigma^A, \sigma^B$ and $\theta$.

Next, we consider its energy estimate by applying operator $\partial^{\tilde{\alpha}}_x$ to \eqref{EQF0EPSIONline}, we have
\begin{equation}\label{partialDXACOU1}
	\left \{
	\begin{array}{lll}
		\partial_t \partial^{\tilde{\alpha}}_x\sigma^{A} +\nabla_x \cdot \partial^{\tilde{\alpha}}_x\mathbf{u}=0, \\[2mm]
		\partial_t \partial^{\tilde{\alpha}}_x\sigma^{B} +\nabla_x \cdot \partial^{\tilde{\alpha}}_x\mathbf{u}=0, \\[2mm]
		\partial_t  \partial^{\tilde{\alpha}}_x\mathbf{u}
		+ \frac{2\nabla \partial^{\tilde{\alpha}}_x\theta}{m^A+m^B}+ \frac{\nabla \partial^{\tilde{\alpha}}_x n}{m^A+m^B}
		=0, \\[2mm]
		\partial_t \partial^{\tilde{\alpha}}_x\theta + \frac{2}{3}  \nabla_x \cdot \partial^{\tilde{\alpha}}_x\mathbf{u} = 0.
	\end{array}
	\right.
\end{equation}
Multiply $\eqref{partialDXACOU1}_1$ by $\partial^{\tilde{\alpha}}_x\mathbf{u}$ and integrate with respect to $x$, yielding:
\begin{equation}\label{3535444}
	\int_{\mathbb{R}^3} (\partial_t  \partial^{\tilde{\alpha}}_x\mathbf{u}
	+ \frac{2\nabla \partial^{\tilde{\alpha}}_x\theta}{m^A+m^B}+ \frac{\nabla \partial^{\tilde{\alpha}}_x n}{m^A+m^B}) \partial^{\tilde{\alpha}}_x\mathbf{u} dx=0.
\end{equation}
We perform integration by parts on the product of $n$ and $\theta$, followed by substituting $\eqref{partialDXACOU1}_1$, $\eqref{partialDXACOU1}_2$, and $\eqref{partialDXACOU1}_4$ into \eqref{3535444} to derive
\begin{equation}\label{fixdt11}
	\partial_t\int_{\mathbb{R}^3}   |\partial^{\tilde{\alpha}}_x\mathbf{u}|^2
	+ \frac{3|\partial^{\tilde{\alpha}}_x\theta|^2}{(m^A+m^B)}+ \frac{|\partial^{\tilde{\alpha}}_xn|^2}{2(m^A+m^B)} dx=0.
\end{equation}
On the other hand, classical solutions to compressible Euler equations exist for
only finite time. For any given $\tau>0$ and  given acoustic initial data $(\sigma^{A,\rm in}, \sigma^{B,\rm in}, \mathbf{u}^{\rm in},  \theta^{\rm in})$, we define
\begin{equation}
	\delta_1=\frac{C_1}{\tau}.
\end{equation}
Then, the lifespan of the solutions for the compressible Euler equations have a uniform lower bound
\begin{equation*}
	\tau^{\delta}\geq\frac{C_1}{\delta}>\frac{C_1}{\delta_1}=\tau.
\end{equation*}
Therefore, we can consider the solutions of the compressible Euler system on an
arbitrary finite time interval $[0,\tau]$, for fixed $\delta_1$  in \eqref{fixdt11}.

Next, in order to establish $\frac{\mu^{\alpha}_{\delta}}{\delta}$ as an effective intermediary, we therefore examine its second-order variation with respect to $\delta$.

\subsection{The second-order perturbation in $\delta$ of Euler solutions}
We introduce the variables $(\sigma^A_{\delta,d}, \sigma^B_{\delta,d}, \mathbf{u}_{\delta,d}, \theta_{\delta,d})$, such that
\begin{equation}\label{secondpeterbudt}
	\delta^2 \sigma^{\alpha}_{\delta,d}=n^{\alpha}_{\delta}-1-\delta \sigma^{\alpha}, \quad  \delta^2 \mathbf{u}_{\delta,d} = \mathbf{u}_{\delta} -\delta \mathbf{u},\quad \delta^2 \theta_{\delta,d} = \theta_{\delta} -1-\delta \theta.
\end{equation}
\begin{lema}\label{lem3232inp}
	For any given $\tau>0$.   	Let $(n^A_{\delta}, n^B_{\delta}, \mathbf{u}_{\delta}, \theta_{\delta})$ be the Euler solutions of \eqref{EQF0EPSION} with the initial data \eqref{nonlindt} and $(\sigma^A,\sigma^B,\mathbf{u},\theta)$ be the acoustic solution of \eqref{EQF0EPSIONline} with initial data \eqref{lindt}. Then for all $0<\delta\leq\delta_0$
	and $s\geq 3$ , there exists a constant $C_2 > 0$ depending only on time $\tau$ and the
	$H^{s+1}$ norm of $(\sigma^{A,\rm in}, \sigma^{B,\rm in}, \mathbf{u}^{\rm in},  \theta^{\rm in})$ such that
	\begin{equation}
		\|(\sigma^A_{\delta,d}, \sigma^B_{\delta,d}, \mathbf{u}_{\delta,d}, \theta_{\delta,d})\|_{H^s} \leq C_2,
	\end{equation}
	which means the acoustic system is the linearization near the constant state $(1,1,0,1)$ of the compressible Euler system:
	\begin{equation}\label{rfes31010}
		\sup_{0\leq t \leq \tau} \|(n^A_{\delta}-1-\delta \sigma^A, n^B_{\delta}-1-\delta \sigma^B, \mathbf{u}_{\delta}-\delta \mathbf{u}, \theta_{\delta}-1-\delta \theta)\|_{H^s} \leq C_s \delta^2.
	\end{equation}
	Moreover, for $s\geq3$, the Sobolev embedding theorem
	implies the uniform pointwise estimates of $(\sigma^A_{\delta,d}, \sigma^B_{\delta,d}, \mathbf{u}_{\delta,d}, \theta_{\delta,d})$.
\end{lema}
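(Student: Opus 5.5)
We will derive a linear symmetrizable hyperbolic system for the second-order correction $W_d:=(\sigma^A_{\delta,d},\sigma^B_{\delta,d},\mathbf{u}_{\delta,d},\theta_{\delta,d})$ with quadratic forcing, and then close an $H^s$ energy estimate on $[0,\tau]$ by Gronwall. First, write $(n^\alpha_\delta,\mathbf{u}_\delta,\theta_\delta)=(1+\delta\sigma^\alpha+\delta^2\sigma^\alpha_{\delta,d},\ \delta\mathbf{u}+\delta^2\mathbf{u}_{\delta,d},\ 1+\delta\theta+\delta^2\theta_{\delta,d})$. Substitute into \eqref{EQF0EPSION}, subtract $\delta$ times \eqref{EQF0EPSIONline}, and divide by $\delta^2$. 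Dividing by $\rho_\delta$ in the momentum equation gives $\frac{1}{\rho_\delta}=\frac{1}{m^A+m^B}+O(\delta)$. For instance, the $n^\alpha$ equation becomes
\begin{equation*}
\partial_t\sigma^\alpha_{\delta,d}+{\rm div}\,\mathbf{u}_{\delta,d}+{\rm div}\big(\sigma^\alpha\mathbf{u}+\delta\,\sigma^\alpha\mathbf{u}_{\delta,d}+\delta\,\sigma^\alpha_{\delta,d}\mathbf{u}+\delta^2\sigma^\alpha_{\delta,d}\mathbf{u}_{\delta,d}\big)=0 .
\end{equation*}
The other equations have the same structure. Thus $W_d$ solves
\begin{equation*}
\partial_t W_d+\sum_j A_j(U_\delta)\partial_{x_j}W_d+B(\delta,U_\delta,\nabla U,\nabla U_\delta)W_d=\mathcal{Q}(U,\nabla U)+\delta\,\mathcal{Q}_1 ,
\end{equation*}
where $U_\delta$ is the Euler state and $U$ the acoustic one. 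The principal part $A_j(U_\delta)$ is exactly the linearized Euler matrix at $U_\delta$, so it is symmetrizable by the same symmetrizer $A_0(U_\delta)$ used for \eqref{EQF0EPSION} in Lemma \ref{cpesystemlem}. The source $\mathcal{Q}$ is bilinear in $(U,\nabla U)$, and $\mathcal{Q}_1$ is smooth in $(\delta,U,U_\delta,\nabla U,\nabla U_\delta)$ at least quadratic. The initial data are $W_d(0)=0$ by \eqref{nonlindt} and \eqref{lindt}.

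Next we collect the coefficient bounds. Fix $\tau$ and choose $\delta_0\le C_0/\tau$, so that Lemma \ref{cpesystemlem} gives $\|U_\delta-(1,1,0,1)\|_{H^{s+1}}\le C_1\delta$ on $[0,\tau]$. This requires applying the lemma with $s+1$ derivatives, which is why the $H^{s+1}$ norm of the data enters. In particular $\rho_\delta, n_\delta, \theta_\delta$ stay bounded away from zero, so $A_0(U_\delta)$ is uniformly positive definite. The acoustic solution satisfies $\|U(t)\|_{H^{s+1}}=\|U^{\rm in}\|_{H^{s+1}}$-equivalent for all $t$ by the conserved energy \eqref{fixdt11} applied to each $\partial_x^{\tilde\alpha}$. (That energy is stated for $(\mathbf{u},\theta,n)$; the individual $\sigma^\alpha$ are recovered by integrating $\partial_t\sigma^\alpha=-{\rm div}\,\mathbf{u}$ over $[0,\tau]$.) Since $s\ge3$, $H^s$ is an algebra and embeds in $W^{1,\infty}$. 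Hence $\|\mathcal{Q}\|_{H^s}\lesssim\|U\|_{H^{s+1}}^2$, and $\mathcal{Q}_1$ is bounded similarly with a factor that is linear in $\|W_d\|_{H^s}$ at worst times $\delta$.

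Then we perform the energy estimate. Apply $\partial_x^{\tilde\alpha}$ with $|\tilde\alpha|\le s$ and pair with $A_0(U_\delta)\partial_x^{\tilde\alpha}W_d$. The standard Friedrichs argument uses symmetry to integrate by parts the top-order term, costing $\|\nabla A_0,\partial_tA_0,\nabla A_j\|_{L^\infty}\lesssim1$. The Moser commutator estimate bounds $\|[\partial^{\tilde\alpha},A_j]\partial_jW_d\|_{L^2}\lesssim\|\nabla U_\delta\|_{H^{s-1}}\|W_d\|_{H^s}$. Together these give
\begin{equation*}
\frac{d}{dt}E_s(t)\le C\,E_s(t)+C\sqrt{E_s(t)}\big(\|U\|_{H^{s+1}}^2+\delta\,E_s(t)\big),\qquad E_s\sim\|W_d\|_{H^s}^2 .
\end{equation*}
We close this by a continuity argument. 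As long as $\delta\|W_d\|_{H^s}\le1$, Gronwall with $E_s(0)=0$ gives $\|W_d\|_{H^s}\le C_2(\tau,\|U^{\rm in}\|_{H^{s+1}})$. Shrinking $\delta_0$ so that $\delta_0C_2<1/2$ keeps the bootstrap valid on all of $[0,\tau]$. Estimate \eqref{rfes31010} then follows from the definition \eqref{secondpeterbudt}, and the pointwise bounds follow from $H^s\hookrightarrow L^\infty$.

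The main obstacle is the bookkeeping in the momentum equation. The nonlinear pressure term $\frac{n_\delta\nabla\theta_\delta+\theta_\delta\nabla n_\delta}{\rho_\delta}$ must be expanded to second order. Three checks are needed there:
\begin{itemize}
\item The linear part must reproduce exactly the acoustic coefficients $\frac{2}{m^A+m^B}$ and $\frac{1}{m^A+m^B}$ of \eqref{EQF0EPSIONline}, so that all $O(\delta)$ terms cancel.
\item The remainder must involve only first derivatives of $W_d$, through the symmetrizable principal part.
\item No second derivatives of $U$ may appear, which is why one extra derivative of the initial data suffices.
\end{itemize}
I would handle this by Taylor expanding $1/\rho_\delta$ with integral remainder and checking the cancellation directly.
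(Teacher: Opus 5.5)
Your route is essentially the paper's: a linear system for $W_d$ with principal part $A_j(U_\delta)$, symmetrized by $\mathbf{A}_0(U_\delta)$, then an $H^s$ energy estimate and Gronwall from $W_d(0)=0$. Your bootstrap is harmless but unnecessary. Once all $\delta^2$-terms are absorbed into coefficients built from the known Euler state $U_\delta$ (as you already do for the principal part), the system is exactly linear in $W_d$, and that is how the paper closes it.

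One slip needs fixing. Recovering $\sigma^\alpha$ by integrating $\partial_t\sigma^\alpha=-\nabla\cdot\mathbf{u}$ in time costs a derivative: it needs $\mathbf{u}\in H^{s+2}$, hence $H^{s+2}$ data. Instead, use $\partial_t(\sigma^A-\sigma^B)=0$ together with the bound on $n=\sigma^A+\sigma^B$ from \eqref{fixdt11}. Alternatively, use the conserved energy with weight $\mathrm{diag}(1,1,(m^A+m^B)\mathbf{I}_3,3)$, which controls $\sigma^A$ and $\sigma^B$ separately. Either way the dependence stays on the $H^{s+1}$ norm of the data, as the statement requires.
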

\begin{proof}
	We rewrite \eqref{secondpeterbudt} as
	\begin{equation*}
		n^{\alpha}_{\delta}=1+\delta \sigma^{\alpha}+\delta^2 \sigma^{\alpha}_{\delta,d}, \quad   \mathbf{u}_{\delta} =\delta \mathbf{u}+\delta^2 \mathbf{u}_{\delta,d},\quad  \theta_{\delta} =1+\delta \theta+\delta^2 \theta_{\delta,d},
	\end{equation*}
	and plug it into \eqref{EQF0EPSION} to get
	\begin{align*}
		&\partial_t \left( \delta\sigma^{A} + \delta^2\sigma^{A}_{\delta,d} \right) + \left( \delta \mathbf{u}+\delta^2 \mathbf{u}_{\delta,d} \right) \cdot \nabla \left( \delta\sigma^{A} + \delta^2\sigma^{A}_{\delta,d} \right) \\
		&\hspace{5.5cm} + \left( 1+\delta \sigma^{A}+\delta^2 \sigma^{A}_{\delta,d} \right) \nabla \cdot \left( \delta \mathbf{u}+\delta^2 \mathbf{u}_{\delta,d} \right) = 0, \\
		&\partial_t \left( \delta\sigma^{B} + \delta^2\sigma^{B}_{\delta,d} \right) + \left( \delta \mathbf{u}+\delta^2 \mathbf{u}_{\delta,d} \right) \cdot \nabla \left( \delta\sigma^{B} + \delta^2\sigma^{B}_{\delta,d} \right) \\
		&\hspace{5.5cm} + \left( 1+\delta \sigma^{B}+\delta^2 \sigma^{B}_{\delta,d} \right) \nabla \cdot \left( \delta \mathbf{u}+\delta^2 \mathbf{u}_{\delta,d} \right) = 0, \\
		&\sum_{\alpha=A,B}\left[m^{\alpha}( 1 + \delta \sigma^{\alpha}+\delta^2 \sigma^{\alpha}_{\delta,d} )\right] \partial_t \left( \delta \mathbf{u}+\delta^2 \mathbf{u}_{\delta,d} \right) \\
		&\hspace{2cm} + \sum_{\alpha=A,B}\left[m^{\alpha}( 1 + \delta \sigma^{\alpha}+\delta^2 \sigma^{\alpha}_{\delta,d} )\right] \left[( \delta \mathbf{u}+\delta^2 \mathbf{u}_{\delta,d} ) \cdot \nabla\right] \left( \delta \mathbf{u}+\delta^2 \mathbf{u}_{\delta,d} \right) \\
		&\hspace{5cm} + \sum_{\alpha=A,B}\left( 1 + \delta \sigma^{\alpha}+\delta^2 \sigma^{\alpha}_{\delta,d} \right) \nabla \left( \delta \theta+\delta^2 \theta_{\delta,d} \right) \\
		&\hspace{5cm} + \sum_{\alpha=A,B}\left( 1 + \delta \theta+\delta^2 \theta_{\delta,d} \right) \nabla \left( \delta \sigma^{\alpha}+\delta^2 \sigma^{\alpha}_{\delta,d} \right) = 0, \\
		&\partial_t \left( \delta \theta+\delta^2 \theta_{\delta,d} \right) + \left( \delta \mathbf{u}+\delta^2 \mathbf{u}_{\delta,d} \right) \cdot \nabla \left( \delta \theta+\delta^2 \theta_{\delta,d} \right) \\
		&\hspace{5.6cm} + \frac{2}{3} \left( 1 + \delta \theta+\delta^2 \theta_{\delta,d} \right) \nabla \cdot \left( \delta \mathbf{u}+\delta^2 \mathbf{u}_{\delta,d} \right) = 0.
	\end{align*}
	The coefficients of $(\sigma^A,\sigma^B,\mathbf{u},\theta)$ in the above equations satisfy the acoustic system \eqref{EQF0EPSIONline}, which can be used to simplify the expression. Consequently, residual terms are at least of order $O(\delta^2)$. For example, the continuity equation simplifies to
	\begin{align*}
		&\delta^2 \Big[\partial_{t}\sigma^{\alpha}_{\delta,d}+( \mathbf{u}+\delta \mathbf{u}_{\delta,d})\cdot \nabla \sigma^{\alpha}+(\delta \mathbf{u}+\delta^2 \mathbf{u}_{\delta,d})\cdot\nabla\sigma^{\alpha}_{\delta,d}\\
		&\hspace{3cm}+(\sigma^{\alpha}+\delta\sigma^{\alpha}_{\delta,d})\nabla\cdot \mathbf{u}+(1+\delta \sigma^{\alpha}+\delta^2 \sigma^{\alpha}_{\delta,d})\nabla\cdot\mathbf{u}_{\delta,d}\Big]=0.
	\end{align*}
	Using \eqref{secondpeterbudt}, the above equation can be written as
	\begin{equation*}
		\partial_{t}\sigma^{\alpha}_{\delta,d}+(\mathbf{u}_{\delta}\cdot\nabla)\sigma^{\alpha}_{\delta,d}+	n^{\alpha}_{\delta}\nabla\cdot\mathbf{u}_{\delta,d}+\delta[\nabla\sigma^{\alpha}\cdot\mathbf{u}_{\delta,d}+(\nabla\cdot\mathbf{u})\sigma^{\alpha}_{\delta,d}]+\nabla\cdot(\sigma^{\alpha}\mathbf{u})=0.
	\end{equation*}
	We can also deduce that $(\sigma^A_{\delta,d}, \sigma^B_{\delta,d}, \mathbf{u}_{\delta,d}, \theta_{\delta,d})$ satisfies the following linear system:
	\begin{align}
		&\partial_{t}\sigma^{A}_{\delta,d}+(\mathbf{u}_{\delta}\cdot\nabla)\sigma^{A}_{\delta,d}+	n^{A}_{\delta}\nabla\cdot\mathbf{u}_{\delta,d}+\delta[\nabla\sigma^{A}\cdot\mathbf{u}_{\delta,d}+(\nabla\cdot\mathbf{u})\sigma^{A}_{\delta,d}]=-\nabla\cdot(\sigma^{A}\mathbf{u}),\notag\\
		&\partial_{t}\sigma^{B}_{\delta,d}+(\mathbf{u}_{\delta}\cdot\nabla)\sigma^{B}_{\delta,d}+	n^{B}_{\delta}\nabla\cdot\mathbf{u}_{\delta,d}+\delta[\nabla\sigma^{B}\cdot\mathbf{u}_{\delta,d}+(\nabla\cdot\mathbf{u})\sigma^{B}_{\delta,d}]=-\nabla\cdot(\sigma^{B}\mathbf{u}),\notag\\
		&\rho_{\delta}\partial_{t}\mathbf{u}_{\delta,d}+\rho_{\delta}(\mathbf{u}_{\delta}\cdot\nabla)\mathbf{u}_{\delta,d}+n_{\delta}\nabla\theta_{\delta,d}+\theta_{\delta}\nabla(\sigma^{A}_{\delta,d}+\sigma^{B}_{\delta,d})\label{erjiedeltardeq} \\
		&\hspace{1.5cm}+\delta[(\partial_{t}\mathbf{u})(m^A\sigma^{A}_{\delta,d}+m^B\sigma^{B}_{\delta,d})+\rho_{\delta}(\mathbf{u}_{\delta}\cdot\nabla)\mathbf{u}+\theta_{\delta,d}\nabla n+(\sigma^{A}_{\delta,d}+\sigma^{B}_{\delta,d})\nabla\theta]\notag\\
		&\hspace{5cm}=-(m^A\sigma^{A}+m^B\sigma^B)\partial_{t}\mathbf{u}-\rho_{\delta}(\mathbf{u}\cdot\nabla)\mathbf{u}-\nabla(n\theta),\notag\\
		&\partial_t\theta_{\delta,d}+(\mathbf{u}_{\delta}\cdot\nabla)\theta_{\delta,d}+\frac{2}{3}\theta_{\delta}\nabla\cdot\mathbf{u}_{\delta,d}+\delta[\nabla\theta\cdot\mathbf{u}_{\delta,d}+\frac{2}{3}(\nabla\cdot\mathbf{u})\theta_{\delta,d}]\notag\\
		&\hspace{8cm}=-\mathbf{u}\cdot\nabla\theta-\frac{2}{3}\theta\nabla\cdot\mathbf{u}.\notag
	\end{align}
	The above system is a linear system about the unknown $(\sigma^A_{\delta,d}, \sigma^B_{\delta,d}, \mathbf{u}_{\delta,d}, \theta_{\delta,d})$. Its coefficients depend on $(n^A_{\delta}, n^B_{\delta}, \mathbf{u}_{\delta}, \theta_{\delta})$ and $(\sigma^A,\sigma^B,\mathbf{u},\theta)$, which are the solutions of the compressible Euler system \eqref{EQF0EPSION} and the acoustic system \eqref{EQF0EPSIONline}. These coefficients are also smooth functions with $H^s$ bounds. We denote $\mathbf{W}_d=(\sigma^A_{\delta,d}, \sigma^B_{\delta,d}, \mathbf{u}_{\delta,d}, \theta_{\delta,d})$, the system can be written into the following vector form
	\begin{equation}
		\partial_t \mathbf{W}_d + \sum_{i=1}^3 \mathbf{A}_i \partial_i \mathbf{W}_d + \mathbf{B} \mathbf{W}_d = \mathbf{G}.
	\end{equation}
	According to classical hyperbolic equation theory, it is required to find a symmetric positive-definite matrix $\mathbf{A}_0$ that symmetrizes the coefficient matrices preceding the spatial derivatives.
	\begin{equation}
		\mathbf{A}_0(\mathbf{U}_0) =	\begin{pmatrix}
			\frac{\theta_{\delta}}{n^A_{\delta}}  & 0 & 0 & 0  \\
			0 & \frac{\theta_{\delta}}{n^B_{\delta}} & 0  & 0 \\
			0  & 0 & \rho_{\delta} \mathbf{I}_3 & 0 \\
			0  & 0 & 0  & \frac{3}{2}\frac{n_{\delta}}{\theta}
		\end{pmatrix},
	\end{equation}
	such that
	\begin{equation}
		\begin{gathered}
			\mathbf{A}_0(\mathbf{U}_0)\mathbf{A}_j(\mathbf{U}_0) =	\begin{pmatrix}
				\frac{u^j_{\delta}\theta_{\delta}}{n^{A}_{\delta}}  & 0 & \theta_{\delta}\mathbf{e}_j^{T}  & 0  \\
				0 & \frac{u^j_{\delta}\theta_{\delta}}{n^{B}_{\delta}} & \theta_{\delta}\mathbf{e}_j^{T}  & 0 \\
				\theta_{\delta}\mathbf{e}_j  & \theta_{\delta}\mathbf{e}_j & \rho_{\delta} u^j_{\delta} \mathbf{I}_3 & n_{\delta} \mathbf{e}_j \\
				0  & 0 & n_{\delta} \mathbf{e}_j^{T}  & \frac{3}{2}\frac{n_{\delta}}{\theta_{\delta}}u^j_{\delta} \\
			\end{pmatrix}
		\end{gathered} \quad j=1,2,3.
	\end{equation}
	Here  $\mathbf{B}$ is a $3 \times 3$ matrix and $\mathbf{G}$ is a $3\times1$ column vector, they are
	consist of $(n^A_{\delta}, n^B_{\delta}, \mathbf{u}_{\delta}, \theta_{\delta})$ and $(\sigma^A,\sigma^B,\mathbf{u},\theta)$ and their first derivatives. Since $(n^A_{\delta}, n^B_{\delta}, \mathbf{u}_{\delta}, \theta_{\delta})$ have positive upper and lower bounds
	for $t\in[0,\tau]$, we can apply the standard energy
	method of the linear symmetric hyperbolic system to  obtain the following
	energy estimate
	\begin{equation*}
		\frac{d}{dt} \|U_d\|^2_{H^s} \leq C_3 \|U_d\|^2_{H^s} + C_4 \|U_d\|_{H^s},
	\end{equation*}
	where the constants $C_3, C_4$ depend on $\|(n^A_{\delta}, n^B_{\delta}, \mathbf{u}_{\delta}, \theta_{\delta})\|^2_{H^{s+1}}$ and $\|(\sigma^A,\sigma^B,\mathbf{u},\theta)\|^2_{H^{s+1}}$.
	By the
	Gronwall inequality, we conclude that $(\sigma^A_{\delta,d}, \sigma^B_{\delta,d}, \mathbf{u}_{\delta,d}, \theta_{\delta,d})$ is bounded by a constant
	depending on time $\tau$ and the initial data $(\sigma^{A,\rm in}, \sigma^{B,\rm in}, \mathbf{u}^{\rm in},  \theta^{\rm in})$. Therefore, the proof of Lemma \ref{lem3232inp} is completed.
\end{proof}

\subsection{Second-order expansion approximation of the local Maxwellian $\mu^{\alpha}_{\delta}$.}

By estimate \eqref{rfes31010}, we can choose $\delta_2$ sufficiently small such that
for each $0<\delta\leq\delta_2$, $\theta_{\delta}$ satisfies
\begin{equation}
	\theta_M<\theta_{\delta}< 2\theta_M
\end{equation}
where $\theta_M$ is a constant depending on $\delta$. We take
\begin{equation}
	\delta_0=\min\{\delta_1,\delta_2\}
\end{equation}
For each $\varepsilon>0$, we choose $0<\delta\leq\delta_0$ and take the Hilbert expansion of the Boltzmann equation \eqref{Meqeps}
around the local Maxwellian  of the form
\begin{equation}
	\left(
	\begin{array}{ccc}
		F_\varepsilon^{ A}\\[2mm]
		F_\varepsilon^{ B}
	\end{array}
	\right)=
	\sum_{k=0}^{5}\varepsilon^{k}
	\left(
	\begin{array}{ccc}
		F_{k}^{A}\\[2mm]
		F_{k}^{B}
	\end{array}
	\right)  +\varepsilon^3
	\left(
	\begin{array}{ccc}
		F_{R}^{A}\\[2mm]
		F_{R}^{B}
	\end{array}
	\right) .
\end{equation}	
where  we have set
$F^{\alpha}_0=\mu^{\alpha}_{\delta}$ and $F_0 \cdots F_5$ are the first six terms of the Hilbert expansion.  $(n^A_{\delta}, n^B_{\delta}, \mathbf{u}_{\delta}, \theta_{\delta})$ is the smooth solution to the compressible Euler system satisfying condition \eqref{globalMaxwellian}, By Theorem \ref{mainthemCPE}  for sufficiently small $0<\varepsilon\leq\varepsilon_0$, \eqref{zjkytld} yields.

Next, we need to show that $\mu^{\alpha}_{\delta}$ is close to $\mu^{\alpha}_{0}+\delta G^{\alpha}$, where $\mu^{\alpha}_{0}$ is the a global Maxwellian  defined in \eqref{defuoaf}, $G^{\alpha}$ is a acoustic perturbation defined in \eqref{1dflimit}.

\begin{lema}\label{222maxdel}
	Let $\mu^{\alpha}_{\delta}$ be given in \eqref{dyjhlibsxF0}, and $G^{\alpha}$ be defined as \eqref{1dflimit}. The solutions $(n^A_{\delta}, n^B_{\delta}, \mathbf{u}_{\delta}, \theta_{\delta})$ and $(\sigma^A,\sigma^B,\mathbf{u},\theta)$ guarantee that $(\sigma^A_{\delta,d}, \sigma^B_{\delta,d}, \mathbf{u}_{\delta,d}, \theta_{\delta,d})$ is a smooth solution of \eqref{erjiedeltardeq}. Then there exists a small $\delta_0>0$, and a constant $C_{\tau}$ depending on time $\tau$ and initial data $(\sigma^{A,\rm in}, \sigma^{B,\rm in}, \mathbf{u}^{\rm in},  \theta^{\rm in})$, such that for all $0<\delta\leq\delta_0$, we have
	\begin{equation}
		\sup_{0\leq t \leq \tau}\{\|\mu^{\alpha}_{\delta}-\mu^{\alpha}_0-\delta G^{\alpha} \|_{L^{2}_{x,v} }+\|\mu^{\alpha}_{\delta}-\mu^{\alpha}_0-\delta G^{\alpha} \|_{L^{\infty}_{x,v} }\} \leq C_{\tau} \delta^2.
	\end{equation}
\end{lema}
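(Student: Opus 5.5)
The plan is a second-order Taylor expansion of the bi-Maxwellian in its four fluid parameters, combined with the $H^s$ control of the second-order correction supplied by Lemma \ref{lem3232inp}. For fixed $\alpha$, view
\[
\mathcal{M}^\alpha(n,\mathbf{u},\theta;v)=\frac{n(m^\alpha)^{3/2}}{(2\pi\theta)^{3/2}}e^{-\frac{m^\alpha|v-\mathbf{u}|^2}{2\theta}}
\]
as a smooth function of $(n,\mathbf{u},\theta)$ near $(1,0,1)$. Then $\mu^\alpha_\delta=\mathcal{M}^\alpha(n^\alpha_\delta,\mathbf{u}_\delta,\theta_\delta)$ and $\mu^\alpha_0=\mathcal{M}^\alpha(1,0,1)$. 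Set $Z_\delta:=(n^\alpha_\delta-1,\mathbf{u}_\delta,\theta_\delta-1)$ and write
\[
\mu^\alpha_\delta-\mu^\alpha_0=D\mathcal{M}^\alpha(1,0,1)\cdot Z_\delta+\mathcal{R}^\alpha_2 ,
\]
where $\mathcal{R}^\alpha_2=\int_0^1(1-s)\,D^2\mathcal{M}^\alpha(1+sZ_\delta)[Z_\delta,Z_\delta]\,ds$.

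First compute the linear term. We have $\partial_n\mathcal{M}=\mu_0^\alpha$, $\partial_{\mathbf{u}}\mathcal{M}=m^\alpha v\,\mu^\alpha_0$ and $\partial_\theta\mathcal{M}=\frac{m^\alpha|v|^2-3}{2}\mu_0^\alpha$. Hence $D\mathcal{M}^\alpha\cdot(\delta\sigma^\alpha,\delta\mathbf{u},\delta\theta)=\delta G^\alpha$ by \eqref{1dflimit}. Using \eqref{secondpeterbudt}, $Z_\delta=\delta(\sigma^\alpha,\mathbf{u},\theta)+\delta^2(\sigma^\alpha_{\delta,d},\mathbf{u}_{\delta,d},\theta_{\delta,d})$, so
\[
\mu^\alpha_\delta-\mu^\alpha_0-\delta G^\alpha=\delta^2\Big(\sigma^\alpha_{\delta,d}+m^\alpha v\cdot\mathbf{u}_{\delta,d}+\tfrac{m^\alpha|v|^2-3}{2}\theta_{\delta,d}\Big)\mu^\alpha_0+\mathcal{R}^\alpha_2 .
\]
The first term is bounded in $L^2_{x,v}$ by $C\delta^2\|(\sigma^\alpha_{\delta,d},\mathbf{u}_{\delta,d},\theta_{\delta,d})\|_{L^2_x}$, since $(1+|v|^2)\mu_0^\alpha\in L^2_v$. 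It is bounded in $L^\infty_{x,v}$ by the $L^\infty_x$ norm, which is controlled by the $H^s$ norm ($s\ge3$) through Sobolev embedding. Both bounds are $\le C_2\delta^2$ by Lemma \ref{lem3232inp}, uniformly on $[0,\tau]$.

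Next treat the remainder $\mathcal{R}^\alpha_2$. The second derivatives of $\mathcal{M}^\alpha$ at the intermediate point $(n_s,\mathbf{u}_s,\theta_s)=(1,0,1)+sZ_\delta$ are polynomials in $v$ of degree at most $4$, with coefficients depending smoothly on $(n_s,\mathbf{u}_s,\theta_s)$, multiplied by $\mathcal{M}^\alpha(n_s,\mathbf{u}_s,\theta_s)$. By \eqref{2boundzeroorder} and Sobolev embedding, $|Z_\delta|\le C\delta$ pointwise in $(t,x)$. So for $\delta\le\delta_0$ small, $\theta_s\in[1/2,3/2]$, $n_s\in[1/2,3/2]$ and $|\mathbf{u}_s|\le1/2$, which gives the uniform bound $|D^2\mathcal{M}^\alpha|\le C(1+|v|)^4e^{-c|v|^2}$ with $c>0$ independent of $(t,x,s)$. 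This uniform Gaussian bound is the key step. Consequently
\[
|\mathcal{R}_2^\alpha(t,x,v)|\le C(1+|v|)^4e^{-c|v|^2}|Z_\delta(t,x)|^2 .
\]
Taking $L^\infty_{x,v}$ gives $C\|Z_\delta\|_{L^\infty_x}^2\le C\delta^2$. Taking $L^2_{x,v}$ gives $C\|Z_\delta\|_{L^\infty_x}\|Z_\delta\|_{L^2_x}\le C\delta^2$, using \eqref{2boundzeroorder} on $[0,\tau]\subset[0,\tau^\delta]$, which holds because $\delta\le\delta_1$.

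The main obstacle is the uniformity in $(t,x)$: the Gaussian tail of $D^2\mathcal{M}^\alpha$ must not degenerate, which requires $\theta_s$ to stay in a fixed compact interval. This is exactly why $\delta_0$ is chosen small and why the pointwise bound on $Z_\delta$ over the whole interval $[0,\tau]$ is needed. Everything else is bookkeeping, with $C_\tau$ depending on $\tau$ and on the $H^{s+1}$ norm of the initial data through $C_1$ and $C_2$.
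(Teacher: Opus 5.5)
Your proof is correct and is essentially the paper's argument. Both use a second-order Taylor expansion of the bi-Maxwellian in its fluid parameters: the first-order term produces $\delta G^\alpha$, and the $O(\delta^2)$ remainder is controlled through Lemma \ref{lem3232inp}, uniform bounds on the temperature for small $\delta$, and Sobolev embedding.

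The only difference is organizational. The paper expands the scalar function $z\mapsto\mu^{\alpha}(z)$ along the curve $n^{\alpha,z}_\delta=1+z\sigma^\alpha+z^2\sigma^\alpha_{\delta,d}$ (and similarly for $\mathbf{u}$ and $\theta$), so its $\{\mu^\alpha\}''(z_*)$ bundles your pieces $D\mathcal{M}^\alpha\cdot\big(Z_\delta-\delta(\sigma^\alpha,\mathbf{u},\theta)\big)$ and $\mathcal{R}^\alpha_2$ together. Your straight-line expansion with integral remainder keeps them separate, which makes the $L^2_{x,v}$ bound via $\|Z_\delta\|_{L^\infty_x}\|Z_\delta\|_{L^2_x}$ explicit.
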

\begin{proof}
	We define the following functions
	\begin{equation}\label{zzzdbbl}
		n^{\alpha,z}_{\delta}=1+z\sigma^{\alpha}+z^2\sigma^{\alpha}_{\delta,d},\quad  \mathbf{u}^{z}_{\delta}=z\mathbf{u}+z^2\mathbf{u}_{\delta,d}, \quad \theta^{z}_{\delta}=1+z\theta+z^2\theta_{\delta,d},
	\end{equation}
	and introduce auxiliary local bi-Maxwellians
	\begin{equation}
		\mu^{\alpha}(z)	= \mu^{\alpha,z}_{\delta}=\frac{n^{\alpha,z}_{\delta}}{[2\pi\theta^z_{\delta}]^{\frac{3}{2}}}\exp \Big\{-\frac{m^{\alpha}|v-\mathbf{u}_{\delta}^z|^2}{2\theta^z_{\delta}}\Big\}.
	\end{equation}
	By \eqref{secondpeterbudt} and \eqref{zzzdbbl}, we observe that $n^{\alpha,\delta}_{\delta}=n^{\alpha}_{\delta},\,\mathbf{u}_{\delta}^{\delta}=\mathbf{u}_{\delta},\,\theta^{\delta}_{\delta}=\theta_{\delta}$. So, $\mu^{\alpha}(\delta)=\mu^{\alpha}_{\delta}$ and it can be expanded by Taylor’s formula as a function of $z$:
	\begin{equation}\label{maxweldeta}
		\mu^{\alpha}(z)	= \mu^{\alpha}(0)+\{\mu^{\alpha}\}'(0)z+\{\mu^{\alpha}\}''(z_*)\frac{z^2}{2},
	\end{equation}
	here $0\leq z_* \leq z$, depending on $(t,x,v)$ and $\delta$. Direct computation shows that
	\begin{align*}
		\{\mu^{\alpha}\}'(z)=&\Big\{\frac{(n^{\alpha,z}_{\delta})'}{n^{\alpha,z}_{\delta}}-\frac{3(\theta^z_{\delta})'}{2\theta^z_{\delta}}+m^{\alpha}(v-\mathbf{u}_{\delta}^z)\cdot\frac{(\mathbf{u}_{\delta}^z)'}{\theta^z_{\delta}}+\frac{m^{\alpha}|v-\mathbf{u}_{\delta}^z|^2(\theta^z_{\delta})'}{2(\theta^z_{\delta})^2}\Big\}\mu^{\alpha,z}_{\delta}\\
		:=&D^{\alpha,z}_{\delta}\mu^{\alpha,z}_{\delta},
	\end{align*}
	where
	\begin{equation*}
		(n^{\alpha,z}_{\delta})'= \sigma^{\alpha}+ 2z \sigma^{\alpha}_{\delta,d}, \quad  (\mathbf{u}^{z}_{\delta})'=\mathbf{u}+2z\mathbf{u}_{\delta,d}, \quad (\theta^{z}_{\delta})'=\theta+2z\theta_{\delta,d}.
	\end{equation*}
	Let $z=0$, we have $[(n^{\alpha,z}_{\delta})',(\mathbf{u}^{z}_{\delta})',(\theta^{z}_{\delta})'](0)=[\sigma^{\alpha},\mathbf{u},\theta]$, which gives that
	\begin{equation}
		\{\mu^{\alpha}\}'(0)	=\{\sigma^{\alpha}+v\cdot\mathbf{u}+(\frac{m^{\alpha}|v|^2-3}{2})\theta\}\mu^{\alpha}_0 = G^{\alpha}(t,x,v).
	\end{equation}
	Take the second derivative with respect to z, we have
	\begin{align*}
		(D^{\alpha,z}_{\delta})'=&\frac{(n^{\alpha,z}_{\delta})''}{n^{\alpha,z}_{\delta}}-\frac{((n^{\alpha,z}_{\delta})')^2}{(n^{\alpha,z}_{\delta})^2}-\frac{3(\theta^z_{\delta})''}{2\theta^z_{\delta}}+\frac{3((\theta^z_{\delta})')^2}{2(\theta^z_{\delta})^2}-m^{\alpha}\frac{|(\mathbf{u}_{\delta}^z)'|^2}{\theta^z_{\delta}}\\
		&+m^{\alpha}(v-\mathbf{u}_{\delta}^z)\cdot \Big\{\frac{(\mathbf{u}_{\delta}^z)''}{\theta^z_{\delta}}-2\frac{(\mathbf{u}_{\delta}^z)'(\theta^z_{\delta})'}{(\theta^z_{\delta})^2}\Big\}\\
		&+m^{\alpha}|v-\mathbf{u}_{\delta}^z|^2\Big\{\frac{(\theta^z_{\delta})''}{2(\theta^z_{\delta})^2}-\frac{((\theta^z_{\delta})')^2}{(\theta^z_{\delta})^3}\Big\}.
	\end{align*}
	By \eqref{zzzdbbl}, $(n^{\alpha,z}_{\delta})'', (\mathbf{u}_{\delta}^z)'', (\theta^z_{\delta})''$ can be written as
	\begin{equation*}
		(n^{\alpha,z}_{\delta})''=2 \sigma^{\alpha}_{\delta,d}, (\mathbf{u}_{\delta}^z)''=2\mathbf{u}_{\delta,d}, (\theta^z_{\delta})''=2\theta_{\delta,d}
	\end{equation*}
	We take $z=\delta$ in \eqref{maxweldeta} to get
	\begin{equation*}
		\mu^{\alpha}_{\delta}=\mu^{\alpha}_0+\delta G^{\alpha}+\{\mu^{\alpha}\}''(\delta_*)\frac{\delta^2}{2}, \qquad 0\leq \delta_* \leq \delta.
	\end{equation*}
	By Lemma \ref{lem3232inp}, $(\sigma^A_{\delta,d}, \sigma^B_{\delta,d}, \mathbf{u}_{\delta,d}, \theta_{\delta,d})$ are uniformly bounded, which gives the uniform pointwise estimates of $(n^{\alpha,z}_{\delta},n^{\alpha,z}_{\delta},\mathbf{u}^{z}_{\delta},\theta^{z}_{\delta})$ and their first and second order derivatives. Furthermore, for $\delta \leq \delta_0$ small enough, $n^{\alpha,z}_{\delta},n^{\alpha,z}_{\delta},\theta^{z}_{\delta}$ have uniform lower bounds. Thus we complete the proof of Lemma \ref{222maxdel} .
\end{proof}
\noindent At the end of this paper, we prove  Theorem \ref{The result for compressible euler limit}.
\begin{proof}
	From \eqref{defGepa}, we deduce that
	\begin{align*}
		&\sum_{\alpha=A,B}	\sup_{0\leq t \leq \tau} \{ \|	G^{\alpha}_{\varepsilon} - G^{\alpha} \|_{L^{2}_{x,v} }+\|	G^{\alpha}_{\varepsilon} - G^{\alpha} \|_{L^{\infty}_{x,v} }\}\\
		&\hspace{1cm} =\sum_{\alpha=A,B}	\sup_{0\leq t \leq \tau} \{ \|	\frac{F^{\alpha}_{\varepsilon}-\mu^{\alpha}_0}{\delta} - G^{\alpha} \|_{L^{2}_{x,v} }+\|	\frac{F^{\alpha}_{\varepsilon}-\mu^{\alpha}_0}{\delta} - G^{\alpha} \|_{L^{\infty}_{x,v} }\}
	\end{align*}
	where
	\begin{equation*}
		\frac{F^{\alpha}_{\varepsilon}-\mu^{\alpha}_0}{\delta}-G^{\alpha}
		=\frac{F^{\alpha}_{\varepsilon}-\mu^{\alpha}_{\delta}}{\delta}+\frac{\mu^{\alpha}_{\delta}-\mu^{\alpha}_0-\delta G^{\alpha}}{\delta}
	\end{equation*}
	Therefore, by Lemma \ref{222maxdel}, it yields that
	\begin{align*}
		&\sum_{\alpha=A,B}	\sup_{0\leq t \leq \tau} \{ \|	G^{\alpha}_{\varepsilon} - G^{\alpha} \|_{L^{2}_{x,v} }+\|	G^{\alpha}_{\varepsilon} - G^{\alpha} \|_{L^{\infty}_{x,v} }\}\\
		&\hspace{0.5cm} =\sum_{\alpha=A,B}	\sup_{0\leq t \leq \tau} \Big\{ \|	\frac{F^{\alpha}_{\varepsilon}-\mu^{\alpha}_{\delta}}{\delta} \|_{L^{2}_{x,v} }+\|	\frac{F^{\alpha}_{\varepsilon}-\mu^{\alpha}_{\delta}}{\delta} \|_{L^{\infty}_{x,v} }\\
		&\hspace{1.5cm}+\|	\frac{\mu^{\alpha}_{\delta}-\mu^{\alpha}_0-\delta G^{\alpha}}{\delta} \|_{L^{2}_{x,v} }+\|	\frac{\mu^{\alpha}_{\delta}-\mu^{\alpha}_0-\delta G^{\alpha}}{\delta} \|_{L^{\infty}_{x,v} }\Big\}\\
		&\hspace{0.5cm} \leq C \{\frac{\varepsilon}{\delta}+\delta\}.
	\end{align*}
	This completes the proof of the acoustic limit result.
\end{proof}

\subsection*{Acknowledgements}
The authors are grateful for the valuable comments made by the referees. This research was
supported by the National Natural Science Foundation of China (Grant 12271356 and 12526581).


\begin{thebibliography}{10}
	\bibitem{[1]Aoki2003JSP} K. Aoki, C. Bardos and C. Takata, \emph{Knudsen layer for gas mixture}.  J. Stat.
	Phys. 112 (2003), 629-655.
	
	%2
	
	%3
	
	\bibitem{[4]BGLJSP 1991}  C. Bardos, F. Golse and F, C.D. Levermore, \emph{Fluid dynamic limits of kinetic equations. I. Formal derivations}. J. Statist. Phys. 63 (1991) no. 1-2, 323-344.
	
	%\bibitem{[5]BGLCPAM1993}  C. Bardos, F. Golse and C.D. Levermore, \emph{ Fluid dynamic limits of kinetic equations. II. Convergence proofs for the Boltzmann equation.} Comm. Pure Appl. Math. 46 (1993) no. 5, 667-753.
	
	\bibitem{[6]BSU1991}  C. Bardos and S. Ukai, \emph{The classical incompressible Navier-Stokes limit of the Boltzmann
		equation}. Math. Models Methods Appl. Sci, 1 (1991), 235-257.
	
	
	
	\bibitem{[7]Bardos2012CPDE} C. Bardos and X.F. Yang, \emph{The Classification of well-posed kinetic boundary layer for hard sphere gas mixtures}. Comm. Partial Diff. Eqs. 37 (2012), 1286-1314.
	
	
	\bibitem{[9]Boltzmann1872AWW} L. Boltzmann, \emph{Weitere Studien \"uber das W\"armegleichgewicht unter Gasmolek\"ulen. Sitzungs} Akad.Wiss.
	Wien 66 (1872), 275-370; translated as: Further studies on the thermal equilibrium of gas molecules. Kinetic theory, vol. 2, 88-174. Pergamon, London, 1966.
	
	%10
	
	\bibitem{[60]Briant2016ARM} M. Briant, E.S. Daus, \emph{The Boltzmann equation for a multi-species
		mixture close to global equilibrium}. Arch. Rational Mech. Anal. 222 (2016) 1367-1443
	
	%12
	
	%\bibitem{[12]Caflisch1980CPAM} R.E. Caflisch, \emph{The fluid dynamic limit of the nonlinear Boltzmann equation}.
	%Comm. Pure Appl. Math. 33(1980) no.5, 651-666.
	
	
	%14
	
	\bibitem{[DYZHD]VPB}R.J.Duan, T.Yang and H.J.Zhao, \emph{The Vlasov-Poisson-Boltzmann system in the whole space: The hard potential case}, J. Differential Equations, 252 (2012), pp. 6356--6386.
	
	\bibitem{[DL]VPB} R.J.Duan and S.Q. Liu, \emph{The Vlasov-Poisson-Boltzmann system for a disparate mass binary mixture}. J Stat Phys (2017) 169:614–684.
	
	%17
	\bibitem{[FQ]ARC} Z.D. FANG and K.L. QI, \emph{From the Boltzmann equation for gas mixture to the two-fluid incompressible hydrodynamic system}. arxiv:2408.03570vl [math.AP].
	%18
	
	\bibitem{[61]Grad1958TG} H. Grad, \emph{Principles of the kinetic theory of gases.}.  In: Handbuch der Physik
	Bd. 12, Thermodynamik der Gase, pp. 205-249. Springer Verlag, Berlin, 1958.
	
	%20
	
	%\bibitem{[ivR]Guo1998} Y. Guo \emph{Smooth irrotational flows in the large to the Euler-Poisson system in $\mathcal{R}^{3+1}$}. Commun. Math.
	%Phys. 195 (1998), 249–265 .
	
	\bibitem{[ivR]Guo2002} Y. Guo \emph{The Vlasov-Poisson-Boltzmann system near Maxwellians}. Comm. Pure Appl. Math.
	55 (2002), no. 9, 1104-1135.
	
	\bibitem{[17]Guo2003Invention} Y. Guo, \emph{The Vlasov-Maxwell-Boltzmann system near Maxwellians}.  Invent. math. 153 (2003), 593-630.
	
	
	%23
	
	\bibitem{[ininp]Guo2010CMP} Y. Guo and J. Jang, \emph{Global Hilbert expansion for the Vlasov-Poisson-Boltzmann system}.
	Comm. Math. Phys. 299 (2010), 469-501.
	
	\bibitem{[iJMP]Guo} Y. Guo, \emph{Boltzmann diffusive limit beyond the Navier-Stokes approximation}.
	Comm. Pure Appl. Math, 59 (2006), 626-687.
	
	\bibitem{[19]Guo2010ARMA} Y. Guo, \emph{Decay and continuity of the Boltzmann equation in bounded domains}. Arch. Rational Mech. Anal. 197 (2010), 713-809
	
	\bibitem{[20]Guo2021ARMA} Y. Guo,  F.M. Huang and Y. Wang, \emph{Hilbert expansion of the Boltzmann equation with specular boundary
		condition in half-space}. Arch. Rational Mech. Anal. 241 (2021), 231-309.
	
	\bibitem{[21]Guo2009KRM} Y. Guo, J. Jang and N. Jiang, \emph{Local Hilbert expansion for the Boltzmann equation}.
	Kinet. Relat. Models. 2 (2009) no. 1, 205-214.
	
	\bibitem{[22]Guo2010CPAM} Y. Guo, J. Jang and N. Jiang, \emph{Acoustic limit for the Boltzmann equation in optimal scaling}. Comm. Pure Appl. Math. 63 (2010) no. 3, 337-361.
	
	%\bibitem{[AD]GuoXIAO} Y. Guo, Q.H. Xiao, \emph{Global Hilbert expansion for the relativistic Vlasov-Maxwell-Boltzmann system}. Comm. Math. Phys., 384 (2021), pp. 341--401.
	
	\bibitem{[23]Hilbert} D. Hilbert, \emph{ Mathematical problems}. Bull. Amer. Math. Soc. (N.S.) 37 (2000) no. 4, 407-436.
	
	%32-36
	
	\bibitem{[29]Jiang2021} N. Jiang, Y.L. Luo and S.J. Tang, \emph{Compressible Euler limit from Boltzmann equation with complete diffusive boundary condition in half-space}, arxiv:3714535 [math.AP]
	
	\bibitem{[30]Jiang2021} N. Jiang, Y.L. Luo and S.J. Tang, 	\emph{Compressible Euler limit from Boltzmann equation with Maxwell reflection boundary condition in half-space}, arxiv:2101.11199 [math.AP]
	
	%39
	
	%\bibitem{[32]Jiang2018IUM} N. Jiang, C.J. Xu and H. Zhao, \emph{Incompressible Navier-Stokes-Fourier limit from the Boltzmann equation:
		%	classical solutions}, Indiana Univ. Math. J. 67 (2018), no. 5, 1817-1855.
	
	\bibitem{[2025X]Jiang} N. Jiang, Y.J. Lei and H.J. Zhao, \emph{On the Vlasov-Poisson-Boltzmann limit of the Vlasov-Maxwell-Boltzmann system}, J. Func. Anal. 287 (2024) 110529.
	
	%\bibitem{[61]Jiang} N. Jiang, Y.L. Luo and S. Tang, \emph{Grad-Caflish type decay estimate of Pseudo-inverse of linearized Boltzmann operator and appliciation to Hilbert expansion of compressible Euler scaling}, arxiv
	
	\bibitem{[inp]LIWANG2023SIAM} F.C. Li and Y.C. Wang,  \emph{Global Euler-Poisson limit to the
		Vlasov--Poisson--Boltzmann system with soft
		potential}. SIAM J. Math. Anal. Vol 55, no. 4, 2887-2916.
	
	%\bibitem{[33]Lachowicz1987MMAS} M. Lachowicz, 	\emph{On the initial layer and the existence theorem for the nonlinear Boltzmann equation}. Math. Methods Appl. Sci. 9 (1987), no. 3, 342-366.
	
	\bibitem{[PG]Liu2004} T.P. Liu and S.H.Yu, \emph{The Green's function and large-time behavior of solutions for one-dimensional Boltzmann equation}. Comm. Pure Appl. Math 57 (2004), 1543-1608.
	
	\bibitem{[PG]Liu2006} T.P. Liu and S.H.Yu, \emph{The Green's function of  Boltzmann equation, 3D waves}. Bull. Inst. Math. Acad. Sin 1 (2006),  1-78.
	
	
	%\bibitem{[lc]TM1986} T. Makino, \emph{On a Local Existence Theorem for the Evolution
		%	Equation of Gaseous Stars}. Patterns and Waves-Qualitative Analysis of
	%Nonlinear Differential Equations. (1986) 459-479.
	
	\bibitem{[37]Maxwell1867JCM} J.C. Maxwell, 	\emph{On the dynamical theory of gases}. Philos. Trans. Roy. Soc. London Ser. A 157 (1867), 49-88. Reprinted in The scientific letters and papers of James Clerk Maxwell, vol. II, 1862-1873. Dover, New York, 1965.
	
	%\bibitem{[62]A.Majda} A. Majda, \emph{Compressible Fluid Flow and Systems of Conservation Laws in Several Space Variables}. Springer Science, vol 53 New York, 1984.
	
	\bibitem{[YU2010]} A. Sotirov and S.H.Yu, \emph{On the Solution of a Boltzmann System for a Gas Mixtures}. Arch. Ration. Mech. Anal, 195 (2010), 675-700.
	
	%\bibitem{[40]Raymond2009BOOK} L. Saint-Raymond, \emph{Hydrodynamic Limits of the Boltzmann Equation}. Lecture Notes in Math. Springer-Verlag, Berlin, 2009.
	
	%54
	
	%\bibitem{[42]Raymond2009JMPA} L. Saint-Raymond and  F. Golse, \emph{The incompressible Navier-Stokes limit of the Boltzmann equation for hard cutoff potentials}. J. Math. Pures Appl. 91 (2009) no. 5, 508-552.
	
	%56
	
	%\bibitem{[DA]Sobook} Y. Sone, \emph{Molecular Gas Dynamics. Theory, Techniques, and Applications}.  Birkh\"auser Boston,
	%Boston, 2007.
	
	%\bibitem{[44]Strain2008ARMA} M. Strain and Y. Guo, \emph{Exponential decay for soft potentials near Maxwellian}. Arch. Rational Mech. Anal. 187 (2008), 287-339.
	
	%59
	%60
	
	%\bibitem{[64]Takata 2001 TTSP} Takata, S., Aoki, K, \emph{  The ghost effect in the continuum limit for a vapor gas mixture around condensed phases: asymptotic analysis of the Boltzmann equation}. Trans.
	%Theory Stat. Phys. 30, 205-237 (2001)
	
	%\bibitem{[65]Takata 2004 PF} Takata, S., \emph{Kinetic theory analysis of the two-surface problem of a vapor-vapor mixture in the continuum limit}. Phys.Fluids 16(16),2182-2188 (2004)
	
	%63
	%64
	
	\bibitem{YJWangSIMA} Y.J. Wang, \emph{The diffusive limit of the Vlasov-Boltzmann system
		for binary fluids}. SIAM J. Math. Anal. 43, No. 1(2011), pp. 253-301.
	
	\bibitem{[49]WangJDE2013} Y.J. Wang, \emph{ Decay of the two-species Vlasov-Poisson-Boltzmann system}. J. Differential Equations 254 (2013) no. 5, 2304-2340.
	
	\bibitem{[51]WuPERP} G.F.Wang, W.K.Wang and T.F. Wu, \emph{Classical solutions to the Boltzmann equations for  gas mixture with unequal molecular masses}. preprint.
	
	\bibitem{[52]WuPERP} Y.C.Wang, T.F. Wu and X.F. Yang, \emph{Diffusive limit of the Boltzmann equations for gas mixture}. preprint.
	
	\bibitem{[50]Wu2023JDE} T.F. Wu and X.F. Yang, \emph{Hydrodynamic limit of Boltzmann equations for gas
		mixture}. J. Differential Equations 377 (2023) 418-468.
	
\end{thebibliography}
\end{document}